\numberwithin{equation}{section}
\newtheorem{theorem}{\bf{Theorem}}[section]
\newtheorem{proposition}[theorem]{\bf{Proposition}}
\newtheorem{definition}[theorem]{\bf{Definition}}
\newtheorem{corollary}[theorem]{\bf{Corollary}}
\newtheorem{lemma}[theorem]{\bf{Lemma}}
\newtheorem{remark}[theorem]{\bf{Remark}}
\newtheorem{conjecture}[theorem]{\bf{Conjecture}}
\newcommand{\abs}[1]{\left|#1\right|_{F}}
\newcommand{\car}[1]{\left|#1\right|}
\newcommand{\rest}{\lvert}
\newcommand{\mrgl}{\operatorname{GL}}
\newcommand{\mrsl}{\operatorname{SL}}
\newcommand{\mrm}{\operatorname{M}}
\newcommand{\mrn}{\operatorname{N}}
\newcommand{\mrTr}{\operatorname{Tr}}
\newcommand{\mrtr}{\operatorname{tr}}
\newcommand{\mrdet}{\operatorname{det}}
\newcommand{\mrdim}{\operatorname{dim}}
\newcommand{\mrhom}{\operatorname{Hom}}
\newcommand{\mrdiag}{\operatorname{diag}}
\newcommand{\mrgcd}{\operatorname{gcd}}
\newcommand{\mrirr}{\operatorname{Irr}}
\newcommand{\mrrep}{\operatorname{Rep}}
\newcommand{\mrcusp}{\operatorname{Cusp}}
\newcommand{\mrsqrt}{\operatorname{Sqrt}}
\newcommand{\mrtemp}{\operatorname{Temp}}
\newcommand{\mrInd}{\operatorname{Ind}}
\newcommand{\mrind}{\operatorname{ind}}
\newcommand{\mrmc}{\operatorname{MC}}
\newcommand{\mrstab}{\operatorname{Stab}}
\newcommand{\mrreg}{\operatorname{reg}}
\newcommand{\mrwh}{\operatorname{Wh}}
\newcommand{\mrexp}{\operatorname{exp}}
\newcommand{\mrnil}{\operatorname{Nil}}
\newcommand{\mbz}{\mathbb{Z}}
\newcommand{\mbc}{\mathbb{C}}
\newcommand{\mbr}{\mathbb{R}}
\newcommand{\mbq}{\mathbb{Q}}
\newcommand{\mbf}{\mathbb{F}}
\newcommand{\mfp}{\mathfrak{p}}
\newcommand{\mfo}{\mathfrak{o}}
\newcommand{\mfa}{\mathfrak{a}}
\newcommand{\mfb}{\mathfrak{b}}
\newcommand{\mfg}{\mathfrak{g}}
\newcommand{\mco}{\mathcal{O}}
\newcommand{\mcc}{\mathcal{C}}
\newcommand{\dF}[1]{d_{F}(#1)}
\newcommand{\bs}[1]{\boldsymbol{#1}}
\newcommand{\wt}[1]{\widetilde{#1}}
\begin{document}
	
	\title{Local metaplectic correspondence and applications}
	\author{Jiandi Zou}
	\address{Mathematics Department, Technion -- Israel Institute of Technology, Haifa 3200003, Israel}
	\email{idealzjd@gmail.com}
	
	\begin{abstract}
		
		We revisit the local metaplectic correspondence previously constructed and studied by Flicker and Kazhdan. After restoring and generalizing some of their results, we get several interesting applications to the representation theory of a Kazhdan-Patterson covering group over a $p$-adic field, including a full criterion of the irreducibility of the Bernstein-Zelevinsky product of two cuspidal representations, the classification of essentially square integrable and tempered representations, and more interestingly, the calculation of the Whittaker dimension of an irreducible representation based on a conjecture.
		
	\end{abstract}	
	\maketitle
	\setcounter{tocdepth}{1}
	\tableofcontents
	
	\section{Introduction}
	
	Motivated by the famous program of Robert Langlands, a main topic in the representation theory 
	is the study of the category of smooth representations of a $p$-adic reductive group, and functorial maps between such categories for different $p$-adic reductive groups over a fixed base field. We leave \cite{borel1979automorphic} for an expository introduction. Still motivated by the hope of generalizing Langlands' philosophy to larger classes of $p$-adic groups, the representation theory of finite central covering groups of $p$-adic reductive groups gradually became shining in the past decades. Correspondingly, we leave \cite{gan2018groups} for a historical introduction.  
	
	The main focus of this article is more concrete: we study a certain functorial map, called the local metaplectic correspondence, from the set of (certain) representations of a Kazhdan-Patterson covering group of a $p$-adic general linear group, to that of a $p$-adic general linear group. Our goal is, on the one hand, to restore and slightly generalize the existing theory, and on the other hand, to dig out interesting applications of this fascinating map.
	
	We give a brief introduction of the above theory. Let $F$ be a $p$-adic field and assume that the subgroup of $F^{\times}$ of $n$-th roots of unity, denoted by $\mu_{n}$, is of order $n$. We consider a certain $n$-fold covering group $\wt{G_{r}}$ as a central extension of $G_{r}:=\mrgl_{r}(F)$ by $\mu_{n}$, constructed by Kazhdan-Patterson in their seminal work \cite{kazhdan1984metaplectic} (see \S \ref{subsectionKPcover} for more details). In particular when $r=1$ we get a central extension  of $F^{\times}$ by $\mu_{n}$ twisted by a power of the $n$-th Hilbert symbol, and when $r=2$ we get the cover considered by Kubota \cite{kubota1967topological} (which is for $\mrsl_{2}(F)$ but can be extended to $\mrgl_{2}(F)$). 
	
	The (local) metaplectic correspondence (for essentially square integrable representations) is a map from 
	the set of genuine essentially square integrable representations of $\wt{G_{r}}$ to the set of essentially square integrable representations of $G_{r}$ that are trivial on $\mu_{n}I_{r}\subset G_r$. Such a map is determined by a formula of Harish-Chandra characters of representations and constructed using a trace formula comparison. Historically, in the case $r=n=2$ it first occurred as a map between two different classes of modular forms and is called the ``Shimura correspondence" \cite{shimura1973modular}. Using a trace formula comparison, Flicker \cite{flicker1980automorphic} constructed the metaplectic correspondence for $r=2$ and general $n$ and Flicker-Kazhdan \cite{flicker1986metaplectic} generalized the above theory to general $r$ and $n$. 
	
	Our starting point is the metaplectic correspondence for essentially square integrable representations of $\wt{G_r}$ and $G_r$ constructed in \cite{flicker1986metaplectic}. The first result of this article is to construct the metaplectic correspondence for essentially square integrable representations of Levi subgroups of $\wt{G_r}$ and $G_r$ as well (\emph{cf.} \S \ref{subsectionMCdefinition}). After that, we study its compatibility with parabolic induction and Jacquet module (\emph{cf.} \S \ref{subsectioncomppindjac}).
	
	
	In the rest of Section 3, we focus on following applications of metaplectic correspondence, including:
	\begin{itemize}
		
		\item (\emph{cf.} Proposition \ref{propvaluesrho}) For a cuspidal representation $\wt{\rho}$ of $\wt{G_r}$, we determine the exact value of the positive real number $s(\wt{\rho})$ (which is unique) such that $(\wt{\rho}\wt{\times}\wt{\rho}\nu^{s(\wt{\rho})})_{\wt{\omega}}$ is reducible. Here $\wt{\times}$ denotes the Bernstein-Zelevinsky product, $\nu(\cdot):=\abs{\mrdet(\cdot)}$ is an unramified character of $G_{r}$ and $\wt{\omega}$ is a compatible genuine character of $Z(\wt{G_{2r}})$. 
		
		\item (\emph{cf.} Proposition \ref{propmcsqrtclassification}) We classify all the essentially square integrable representations of $\wt{G_{r}}$ as well as their image under the metaplectic correspondence, in the sense of the Zelevinsky classification (\emph{cf.} \cite{kaplan2022classification}).
		
		\item (\emph{cf.} Proposition \ref{proptempclasify}, Proposition \ref{propmclifttemp}) We classify all the essentially tempered representations of $\wt{G_{r}}$ via the Zelevinsky classification, and we study a possible metaplectic correspondence for essentially tempered representations.
		
	\end{itemize}
	
	The above applications serve well as ``appetizers", and in Section 4 our ``main dish" is to calculate the dimension of the Whittaker space of an irreducible representation of $\wt{G_{r}}$. Unlike the linear case, in general for a covering group such a Whittaker space does not satisfy the multiplicity one theorem. This causes additional difficulties in studying automorphic representations and local-global compatiblities, since usually the construction of a global $L$-function, in particular its factorization as an Euler product, is based on the uniqueness of Whittaker model (\emph{cf.} \cite{jacquet1981euler} for the Rankin-Selberg integral for instance). 
	
	To achieve a reasonable theory of global automorphic representations, it is important to understand the dimension of a Whittaker space. This is illustrated in the work of Kaplan (\emph{cf.} \cite{kaplan2019doubling}*{Appendix A}, \cite{kaplan2022rankin}), using generalized doubling method to study the L-function of a pair of representations of a covering group of $G_{r}$ \footnote{We should warn the readers that in \emph{loc. cit.} a different covering group of $G_{r}$ is considered. But hopefully a similar theory could be established for a Kazhdan-Patterson covering group.}. A key idea in his argument is that, one need to pick a certain generalized Speh representation whose Whittaker dimension is one (\emph{cf.} \cite{kaplan2022rankin}*{Conjecture 14}). The consideration of such a generalized Speh representation dates back to the work of Suzuki \cite{suzuki1998distinguished} of distinguished representations. Then the Euler product factorization of the related generalized doubling integral follows from this multiplicity one fact. 
	
	We explain our idea of calculating the Whittaker dimension of an irreducible representation for a Kazhdan-Patterson covering group. In our case, the Whittaker functor is exact and compatible with the Bernstein-Zelevinsky product (\emph{cf.} Proposition \ref{propwhitdimBZprod}). Then using the Zelevinsky classification essentially we only need to calculate the Whittaker dimension of an essentially square integrable representation $\wt{\pi}$ of $\wt{G_{r}}$ (\emph{cf.} Remark \ref{remcalirredwhdim}). The Whittaker dimension of $\wt{\pi}$ can be re-explained by the Harish-Chandra germ function $c_{\wt{\pi}}$ of $\wt{\pi}$ evaluated at the identity, which is a well-known result in the linear case \cite{rodier1975modele}, \cite{moeglin1987modeles} and also generalized to the covering group case \cite{patel2015theorem}. Let $\pi$ be the metaplectic lift of $\wt{\pi}$, then using the metaplectic correspondence the above question is reduced to calculating the Harish-Chandra germ function $c_{\pi}$ of $\pi$ evaluated at diagonal elements of order dividing $n$ in $G_{r}$ (\emph{cf.} Corollary \ref{eqwhitdim}). 
	
	Let $\pi'=Z(\rho,[a,b])$ be the unique irreducible subrepresentation of the Bernstein-Zelevinsky product $\rho\nu^{a}\times\rho\nu^{a+1}\times\dots\times\rho\nu^{b}$, where $\rho$ is a cuspidal representation of $G_{r_{0}}$ and $a\leq b$ are two integers. We propose a general conjecture (\emph{cf.} Conjecture \ref{conjmain}) about the value of $c_{\pi'}$ at a diagonal element of order dividing $n$. Our conjecture has its own interest, which dates back to the classical but difficult question of calculating special values of a Harish-Chandra germ function (\emph{cf.} \cite{harishchandra1999admissible}*{Introduction}). Using this conjecture and Tadic's determinantal formula, we are able to calculate the value of $c_{\pi}$ at a diagonal element of order $n$, and thus the Whittaker dimension of $\wt{\pi}$ (\emph{cf.} Theorem \ref{thmwhitcal}). Our dimensional formula seems to be neat enough.
	
	We verify Conjecture \ref{conjmain} for the following two cases (\emph{cf.} \S \ref{subsectioneviconj}):
	\begin{itemize}
		\item $r_{0}=1$	and $\rho$ is a character of $F^{\times}$;
		\item $a=b$ and $\pi'$ is cuspidal.
	\end{itemize}
	The first case is rather direct, since $\pi'$ itself is a character. The proof of the second case is based on a rudiment usage of the simple type theory of Bushnell-Kutzko \cite{bushnell129admissible}. Hopefully our argument for the second case can be used to other groups and their supercuspidal representations. 
	
	Finally, in Section 5 we explain how our results could be used to study an analogue of \cite{kaplan2022rankin}*{Conjecture 14} for a Kazhdan-Patterson covering group.
	
	We end the introduction by comparing our result with other work of calculating the Whittaker dimension of an irreducible representation of a covering group. For a Kazhdan-Patterson covering group, to the knowledge of the author, the only known cases\footnote{Here, by ``known" we mean that a concrete value could theoretically be obtained, although sometimes there exist combinatorial difficulties. Also see Remark \ref{remcalirred}.} are principal series and their subquotients \cite{kazhdan1984metaplectic}, \cite{kazhdan1986towards}, \cite{suzuki1998distinguished}, as well as depth 0 cuspidal representations \cite{blondel1992uniqueness}. For general covering groups, still the known cases are confined to principal series and their subquotients, and depth 0 cuspidal representations. We refer to the work of F. Gao and his collaborators for more details, \emph{cf.} \cite{gao2017distinguished}, 
	\cite{gao2021Rgroup}, \cite{gao2022Rgroup}, \cite{gao2022genuine}, \cite{gao2019whittaker}.
	
	The author would like to thank Ning Guo, Max Gurevich, Eyal Kaplan, Erez Lapid, Caihua Luo, Vincent S\'echerre and Chuijia Wang for helpful correspondences or discussions, and the anonymous referees for their pertinent advice.
	This research was supported by the Israel Science Foundation (grant No. 737/20).
	
	\section{Preliminaries}
	
	\subsection{Notation}\label{subsectionnotation}
	
	Throughout this article $F$ will be a non-archimedean locally compact field of residue characteristic $p$, and beginning from Section 3 we will also assume $F$ to be of characteristic $0$. Let $\mfo_{F}$ be its ring of integers, let $\mfp_{F}$ be the maximal ideal of $\mfo_{F}$ and let $\mbf_{q}$ be its residue field. Let $\abs{\cdot}$ be the normalized absolute value of $F$. 
	
	We consider $\ell$-groups in the sense of Bernstein-Zelevinsky \cite{bernstein1976representations}. The $\ell$-groups we are considering in this paper will be the $F$-points of reductive groups as well as their finite central extensions. For an $\ell$-group $G$ and its closed subgroup $H$, we denote by $Z(G)$ the center of $G$ and by $Z_{G}(H)$ the centralizer of $H$ in $G$. In particular, we let $G_{r}=\mrgl_{r}(F)$ for a positive integer $r$. 
	
	By representations of an $\ell$-group $G$, we always mean complex smooth representations. Similarly by characters we mean one-dimensional representations. We denote by $\mrrep(G)$, $\mrirr(G)$, $\mrtemp(G)$, $\mrsqrt(G)$, $\mrcusp(G)$ the sets of equivalence classes of finite length representations, irreducible representations, irreducible essentially tempered representations, irreducible essentially square integrable representations and irreducible cuspidal representations respectively. For a closed subgroup $Z$ of $Z(G)$ and a character $\omega$ of $Z$, we denote by $\mrirr_{\omega}(G)$, $\mrtemp_{\omega}(G)$, $\mrsqrt_{\omega}(G)$, $\mrcusp_{\omega}(G)$ the corresponding subsets
	of those equivalence classes of representations with central character restricted to $Z$ being $\omega$. For a representation $\pi$ of $H$ and a positive integer $m$, we let $m\cdot\pi$ be the direct sum of $m$-copies of $\pi$. If $\pi$ is irreducible, then we let $\omega_{\pi}$ be its central character. 
	
	If $G$ is a finite central extension of a reductive group over $F$ (or even an open finite index subgroup of such a group), we let $\theta_{\pi}$ be the Harish-Chandra character of $\pi$ as a locally integrable function on $G$, that is smooth on the semi-simple regular locus of $G$. This notation can also be generalized to any finite length representations of $G$ by considering the sum of the Harish-Chandra characters of each irreducible subquotient. We also remark that if $\pi\in\mrirr(G)$, then $\pi$ is determined by $\theta_{\pi}$. Moreover if $\pi\in\mrsqrt(G)$, then $\pi$ is determined by the restriction of $\theta_{\pi}$ to the elliptic locus (\emph{cf.} for instance \cite{clozel1991invariant}).
	
	For a non-negative integer $m$ and an integer $k$, we denote by $\binom{m}{k}$ the binomial coefficient of $x^{k}$ in $(1+x)^{m}$, and by $m!$ the factorial of $m$.
	
	\subsection{Kazhdan-Patterson covering group}\label{subsectionKPcover}
	
	Fix a positive integer $n$. Let $\mu_{n}:=\mu_{n}(F)$ be the group of $n$-th roots of unity in $F$. From now on we further assume that the cardinality of $\mu_{n}$ is $n$. We let $F^{\times n}:=\{x^{n}\mid x\in F^{\times}\}$ be an open finite index subgroup of $F^{\times}$.
	We denote by 
	$$(\cdot,\cdot)_{n}:F^{\times}\times F^{\times}\rightarrow \mu_{n}$$ 
	the $n$-th order Hilbert symbol. It is a bimultiplicative antisymmetric pairing that
	defines a non-degenerate bimultiplicative pairing on $F^{\times}/F^{\times n}\times F^{\times}/F^{\times n}$ (\emph{cf.} \cite{weil1974basic}*{XIII.\S5}).
	
	For a (connected) reductive group $G$ over $F$ (we use $G$ to denote its $F$-points by abuse of notation), we consider its central extension of $\ell$-groups
	$$\xymatrix{0 \ar[r] & \mu_{n} \ar[r]^-{} & \wt{G} \ar[r]^-{\bs{p}} & G \ar[r] & 0},$$
	which is called an \emph{$n$-fold covering group}. The set of equivalence classes of such central extensions is in bijection with the continuous cohomology group $H^{2}(G,\mu_{n})$. More precisely, we fix a continuous section $\bs{s}:G\rightarrow \wt{G}$, that is, $\bs{p}\circ\bs{s}=\mathrm{id}$. Then the central extension $\wt{G}$ corresponds to the cohomology class of the 2-cocycle $\sigma:G\times G\rightarrow\mu_{n}$ satisfying
	$$\bs{s}(g_{1})\bs{s}(g_{2})=\bs{s}(g_{1}g_{2})\sigma(g_{1},g_{2}),\quad g_{1},g_{2}\in G.$$
	
	In particular, we consider the case $G=G_{r}$. Let $\sigma_{\mathrm{SL}_{r+1}}$ be the special 2-cocycle of $\mathrm{SL}_{r+1}(F)$ considered by Matsumoto \cite{matsumoto1969sous} with respect to the Steinberg symbol $(\cdot,\cdot)^{-1}_{n}$. Let $\sigma^{(0)}$ be the pull-back of $\sigma_{\mathrm{SL}_{r+1}}$ via $$G_{r}\rightarrow\mathrm{SL}_{r+1}(F),\ g\mapsto\mathrm{diag}(\mathrm{det}(g)^{-1},g).$$ 
	The explicit construction and calculation of $\sigma^{(0)}$ has been done in \cite{banks1999block}. Finally for any $c\in\mbz/n\mbz$ let $\sigma^{(c)}$ be the 2-cocycle satisfying
	$$\sigma^{(c)}(g_{1},g_{2})=\sigma^{(0)}(g_{1},g_{2})\cdot(\mathrm{det}(g_{1}),\mathrm{det}(g_{2}))_{n}^{c},\quad g_{1},g_{2}\in G_{r}.$$
	From now on we fix $c\in\mbz$, and we call the $n$-fold cover $\wt{G_{r}}$ corresponding to $\sigma^{(c)}$ a \emph{Kazhdan-Patterson covering group} of $G_{r}$. In particular $\wt{F^{\times}}$ will denote the $n$-fold cover of $F^{\times}$ with respect to the 2-cocycle $(\cdot,\cdot)_{n}^{c}$. 
	
	
	For a closed subgroup $H$ of $G_{r}$, let $\bs{p}^{-1}(H)$ or $\wt{H}$ be the preimage of $H$ in $\wt{G}$ and let $H^{(n)}=\{h\in H\mid \mrdet(h)\in F^{\times n}\}$ be an open normal subgroup of $H$ of finite index.
	
	Let $\beta$ be a composition of $r$. It means that $\beta=(r_{1},\dots,r_{k})$ for certain $k$ and positive integers $r_{i}$ such that $r=r_{1}+\dots+ r_{k}$. Let $G_{\beta}$ be the standard Levi subgroup of $G_{r}$ with respect to $\beta$. So $G_{\beta}\simeq G_{r_{1}}\times\dots\times G_{r_{k}}$ with each $G_{r_{i}}$ being regarded as a subgroup of $G_{r}$ via the embedding $g_{i}\mapsto\mrdiag(I_{r_{1}},\dots,I_{r_{i-1}},g_{i},I_{r_{i+1}},\dots, I_{r_{k}})$. 
	
	We recall the following properties concerning a Kazhdan-Patterson covering group.
	\begin{enumerate}
		\item When $r=1$, we have $\sigma^{(c)}(x,y)=(x,y)^{c}_{n}$ for $x,y\in F^{\times}$.
		
		\item (\cite{banks1999block}*{Theorem 11}) The restriction of $\sigma^{(c)}$ to $G_{\beta}\times G_{\beta}$ is given by
		\begin{align}
			&\sigma^{(c)}(\mrdiag(x_{1},\dots,x_{k}),\mrdiag(y_{1},\dots, y_{k}))= \nonumber \\
			&\prod_{i=1}^{k}\sigma^{(c)}(x_{i},y_{i})\cdot\prod_{1\leq i<j\leq k}(\mrdet(x_{i}),\mrdet(y_{j}))_{n}^{c+1}(\mrdet(x_{j}),\mrdet(y_{i}))_{n}^{c},\quad x_{i},y_{i}\in G_{r_{i}} \label{eqBLS}
		\end{align}
		\item (\cite{chinta2013metaplectic}*{\S 2.1, Lemma 1}) The center of $\wt{G_{r}}$ is given by
		\begin{equation}\label{eqZwtGr}
			Z(\wt{G_{r}})=\bs{p}^{-1}(\{\lambda I_{r}\mid \lambda^{2rc+r-1}\in F^{\times n}\}).
		\end{equation}
		
		\item By a direct calculation, \begin{equation}\label{eqZwtGrintersection}
			G_{\beta}^{(n)}\cap \bs{p}(Z(\wt{G_{r}}))=G_{r}^{(n)}\cap \bs{p}(Z(\wt{G_{r}}))=F^{\times n} I_{r}.
		\end{equation}
		
		
	\end{enumerate}
	
	We fix a faithful character $\epsilon:\mu_{n}\rightarrow \mbc^{\times}$. For a closed subgroup $H$ of $G_{r}$, a representation $\wt{\pi}$ of $\wt{H}$ is called $\epsilon$-\emph{genuine} (or simply genuine if $\epsilon$ is clear) if the restriction of $\wt{\pi}$ to $\mu_{n}$ acts by $\epsilon$. Notice that if a representation of $\wt{H}$ is not genuine, then it factors through an $n'$-fold covering group with $n'$ dividing $n$. So essentially we only need to stick to $\epsilon$-genuine representations. We also remark that the $H$-conjugation on $\wt{H}$ and $\wt{\pi}$, as well as the twist $\wt{\pi}\cdot\chi$ for $\chi$ being a character of $H$ are well-defined in the usual sense.
	
	\subsection{Metaplectic tensor product}\label{subsectionMTP}
	
	In this subsection, we rephrase the construction of metaplectic tensor product  for irreducible representations in \cite{kaplan2022classification}*{\S 4}. Our presentation here is rather down-to-earth, which in particular generalizes the previous works of Kable \cite{kable2001tensor}, Mezo \cite{mezo2004metaplectic} and Takeda \cite{takeda2016metaplectic, takeda2017remarks}.

	We keep the notations of the last subsection. Let $H_{i}$ be a closed subgroup of $G_{r_{i}}$ for each $i=1,\dots,k$ and let $H=H_{1}\times\dots\times H_{k}$ be a closed subgroup of $G_{\beta}$.  Such an $H$ is called \emph{block compatible} if the groups $\wt{H_{i}}$ are pairwise commutative as subgroups in $\wt{H}$ for $i=1,\dots,k$. In this case, the group $\wt{H}$ is isomorphic to $\wt{H_{1}}\times\dots\times\wt{H_{k}}/\Xi$ with
	$$\Xi=\{(\zeta_{1},\dots,\zeta_{k})\in\mu_{n}\times\dots\times\mu_{n}\mid \zeta_{1}\dots\zeta_{k}=1\}.$$
	If $H$ is block compatible, then for each $i$ and a genuine representation $\wt{\pi}_{i}$ of $\wt{H_{i}}$, the tensor product
	$$\wt{\pi}_{1}\otimes\dots\otimes\wt{\pi}_{k}$$
	defines a representation of $\wt{H_{1}}\times\dots\times\wt{H_{k}}$ that descends to a representation of $\wt{H}$, which we denote by $$\wt{\pi}_{1}\wt{\otimes}\dots\wt{\otimes}\wt{\pi}_{k}.$$ 
	We remark that in general $H$ is usually not block compatible, but the group $H_{1}^{(n)}\times\dots\times H_{k}^{(n)}$ is block compatible by \eqref{eqBLS}. 
	
	Now for each $i=1,\dots,k$ let $\gamma_{i}$ be a composition of $r_{i}$ and let $\gamma=\gamma_{1}+\dots+\gamma_{k}$ be a composition of $r$ as a refinement of $\beta$. Given $\wt{\pi}_{i}\in\mrirr_{\epsilon}(\wt{G_{\gamma_{i}}})$ for each $i$, our goal is to construct a representation in $\mrirr_{\epsilon}(\wt{G_{\gamma}})$ as some kind of ``tensor product" of $\wt{\pi}_{i}$. This cannot be achieved directly, since $G_{\gamma}=G_{\gamma_{1}}\times\dots\times G_{\gamma_{k}}$ is not block compatible. But instead, its open and finite index subgroup 
	$$G_{\gamma,\beta}^{(n)}:=G_{\gamma_{1}}^{(n)}\times\dots\times G_{\gamma_{k}}^{(n)} $$
	is block compatible. In particular this notation is valid for $\gamma=\beta$. So we may first restrict each $\wt{\pi}_{i}$ to $\wt{G_{\gamma_{i}}^{(n)}}$, which we denote by $\wt{\pi}_{i}^{(n)}$. Then we take the tensor product
	$\wt{\pi}_{1}^{(n)}\wt{\otimes}\dots\wt{\otimes}\wt{\pi}_{k}^{(n)}$ as a representation of $\wt{G_{\gamma,\beta}^{(n)}}$. Finally we consider the induction $\mrInd_{\wt{G_{\gamma,\beta}^{(n)}}}^{\wt{G_{\gamma}}}(\wt{\pi}_{1}^{(n)}\wt{\otimes}\dots\wt{\otimes}\wt{\pi}_{k}^{(n)})$ and take one of its irreducible component as the ``tensor product" we want. Bootstrapping the above procedure gives us as the following theorem. 
	
	\begin{theorem}[\cite{kaplan2022classification}*{Proposition 3.10 and \S 4.2}]\label{thmMTP}
		
		Let $\wt{\pi}\in\mrirr_{\epsilon}(\wt{G_{\gamma}})$ and let $\wt{\pi}_{i}\in\mrirr_{\epsilon}(\wt{G_{\gamma_{i}}})$ for each $i=1,\dots,k$.
		
		\begin{enumerate}
			\item  The center of $\wt{Z(G_{\beta})}$ is $Z(\wt{G_{r}})\wt{Z(G_{\beta,\beta}^{(n)})}$, and $[\wt{Z(G_{\beta})}:Z(\wt{G_{r}})\wt{Z(G_{\beta,\beta}^{(n)})}]$ is the square of a positive integer $n_{\beta}$.
			
			\item Let $\wt{\tau}$ be any irreducible component of $\wt{\pi}^{(n)}:=\wt{\pi}\rest_{\wt{G_{\gamma,\beta}^{(n)}}}$, then we have
			\begin{equation}\label{eqpirestGgamman}
				\wt{\pi}^{(n)}\simeq n_{\beta}\cdot\bigoplus_{g\in\wt{G_{\gamma}}/\wt{Z(G_{\beta})}\wt{G_{\gamma,\beta}^{(n)}}}\wt{\tau}^{g},
			\end{equation}
			where those $\wt{\tau}^{g}$ on the right-hand side are pairwise inequivalent.
			We let $m_{\beta}:=n_{\beta}\cdot[\wt{G_{\gamma}}:\wt{Z(G_{\beta})}\wt{G_{\gamma,\beta}^{(n)}}]$ which is the total multiplicity of $\wt{\pi}^{(n)}$.
			
			\item We call a character $\wt{\omega}$ of $Z(\wt{G_{r}})$ \emph{compatible} (with respect to $(\wt{\pi}_{1},\dots,\wt{\pi}_{k})$) if the following compatibility condition is satisfied
			\begin{equation}\label{eqomegacompatible}
				\prod_{i=1}^{k}\omega_{\wt{\pi}_{i}}(\bs{s}(\lambda I_{r_{i}}))=\wt{\omega}(\bs{s}(\lambda I_{r}))\quad\text{for any}\ \lambda\in F^{\times n}.
			\end{equation}
			Then we have
			\begin{equation}\label{eqMTPdef}
				\mrInd_{\wt{G_{\gamma,\beta}^{(n)}}}^{\wt{G_{\gamma}}}(\wt{\pi}_{1}^{(n)}\wt{\otimes}\dots\wt{\otimes}\wt{\pi}_{k}^{(n)})\simeq M_{\beta}\cdot\bigoplus_{\wt{\omega}}(\wt{\pi}_{1}\wt{\otimes}\dots\wt{\otimes}\wt{\pi}_{k})_{\wt{\omega}},
			\end{equation}
			where $\wt{\omega}$ ranges over all the characters of $Z(\wt{G_{r}})$ satisfying \eqref{eqomegacompatible}, and $(\wt{\pi}_{1}\wt{\otimes}\dots\wt{\otimes}\wt{\pi}_{k})_{\wt{\omega}}$ is an irreducible representation of $\wt{G_{\gamma}}$ whose central character restricted to $Z(\wt{G_{r}})$ is $\wt{\omega}$, and $M_{\beta}:=n_{\beta}\cdot\prod_{i=1}^{k}m_{(r_{i})}$. Here each $m_{(r_{i})}$ is defined as in (2) by taking $\beta=(r_{i})$ correspondingly.
			
			\item \begin{itemize}
				\item For two genuine characters $\wt{\omega}, \wt{\omega}'$ of $Z(\wt{G_{r}})$ satisfying (\ref{eqomegacompatible}), there exists a character $\chi$ of $G_{\gamma}/G_{\gamma}^{(n)}$ such that $$(\wt{\pi}_{1}\wt{\otimes}\dots\wt{\otimes}\wt{\pi}_{k})_{\wt{\omega}'}\simeq (\wt{\pi}_{1}\wt{\otimes}\dots\wt{\otimes}\wt{\pi}_{k})_{\wt{\omega}}\cdot\chi.$$
				
				\item Let $\wt{\pi}_{i}'\in\mrirr_{\epsilon}(\wt{G_{\gamma_{i}}})$ for $i=1,\dots,k$ such that $(\wt{\pi}_{1},\dots,\wt{\pi}_{k};\wt{\omega})$ and $(\wt{\pi}_{1}',\dots,\wt{\pi}_{k}';\wt{\omega})$ satisfy \eqref{eqomegacompatible}, then
				$$(\wt{\pi}_{1}'\wt{\otimes}\dots\wt{\otimes}\wt{\pi}_{k}')_{\wt{\omega}}\simeq (\wt{\pi}_{1}\wt{\otimes}\dots\wt{\otimes}\wt{\pi}_{k})_{\wt{\omega}}$$
				if and only if
				$\wt{\pi}_{i}'\simeq\wt{\pi}_{i}\chi_{i}$ for a certain character $\chi_{i}$ of $G_{\gamma_{i}}/G_{\gamma_{i}}^{(n)}$ for each $i$.
				\item Every $\wt{\pi}\in\mrirr_{\epsilon}(\wt{G_{\gamma}})$ is of the form $(\wt{\pi}_{1}\wt{\otimes}\dots\wt{\otimes}\wt{\pi}_{k})_{\wt{\omega}}$ for certain $\wt{\pi}_{i}\in\mrirr_{\epsilon}(\wt{G_{\gamma_{i}}})$ for each $i$ and a compatible genuine character $\wt{\omega}$ of $Z(\wt{G_{r}})$.
			\end{itemize}
			
		\end{enumerate}
		
	\end{theorem}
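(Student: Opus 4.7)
The overall strategy is Clifford theory applied to the tower $\wt{G_{\gamma,\beta}^{(n)}} \trianglelefteq \wt{Z(G_{\beta})}\wt{G_{\gamma,\beta}^{(n)}} \trianglelefteq \wt{G_{\gamma}}$, combined with explicit commutator computations derived from \eqref{eqBLS}. For (1), the abelian group $Z(G_{\beta})$ equals $F^{\times}I_{r_{1}}\times\dots\times F^{\times}I_{r_{k}}$, so $\wt{Z(G_{\beta})}$ is two-step nilpotent and its commutator descends to a bilinear form on the quotient modulo $\mu_{n}$-lifts, expressible via \eqref{eqBLS} as a product of powers of Hilbert symbols $(\mrdet(x_{i}),\mrdet(y_{j}))_{n}$. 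The radical of this form is exactly $Z(\wt{G_{r}})\wt{Z(G_{\beta,\beta}^{(n)})}$: by \eqref{eqZwtGrintersection}, elements of $Z(G_{\beta,\beta}^{(n)})$ have $n$-th power block determinants and therefore pair trivially, while $Z(\wt{G_{r}})$ is central by \eqref{eqZwtGr}. The reverse containment uses non-degeneracy of the Hilbert symbol on $F^{\times}/F^{\times n}$ applied component by component, together with the observation that the only ``diagonal'' obstruction is the scalar direction $\lambda I_{r}$ captured by \eqref{eqZwtGr}. Finally, a non-degenerate alternating $\mu_{n}$-valued pairing on a finite abelian group forces its order to be a perfect square, giving the exponent $n_{\beta}^{2}$.

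For (2), since $\wt{G_{\gamma,\beta}^{(n)}}$ is normal in $\wt{G_{\gamma}}$ with abelian quotient, standard Clifford theory expresses $\wt{\pi}^{(n)}$ as a direct sum of $\wt{G_{\gamma}}$-conjugates of a fixed irreducible component $\wt{\tau}$, each with a common multiplicity. Matching central characters (using (1)) shows that the stabilizer of $\wt{\tau}$ equals $\wt{Z(G_{\beta})}\wt{G_{\gamma,\beta}^{(n)}}$, giving the indexing set in \eqref{eqpirestGgamman}; the multiplicity $n_{\beta}$ arises because any extension of $\wt{\tau}$ to this stabilizer is an irreducible projective representation of the quotient $\wt{Z(G_{\beta})}\wt{G_{\gamma,\beta}^{(n)}}/Z(\wt{G_{r}})\wt{G_{\gamma,\beta}^{(n)}}$, which by (1) carries a non-degenerate symplectic pairing of order $n_{\beta}^{2}$, forcing projective dimension $n_{\beta}$. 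For (3), I would restrict both sides of \eqref{eqMTPdef} to $\wt{G_{\gamma,\beta}^{(n)}}$: the left becomes $[\wt{G_{\gamma}}:\wt{G_{\gamma,\beta}^{(n)}}]\cdot(\wt{\pi}_{1}^{(n)}\wt{\otimes}\dots\wt{\otimes}\wt{\pi}_{k}^{(n)})$, which by part (2) applied to each tensor factor decomposes with explicit multiplicity $\prod_{i=1}^{k}m_{(r_{i})}$. The compatibility condition \eqref{eqomegacompatible} is forced on the central character of any summand because $\bs{s}(\lambda I_{r})$ differs from $\bs{s}(\lambda I_{r_{1}})\dots\bs{s}(\lambda I_{r_{k}})$ by a product of Hilbert-symbol factors that vanish whenever $\lambda\in F^{\times n}$. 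Summing over all compatible $\wt{\omega}$ recovers the full induced representation by Frobenius reciprocity for $Z(\wt{G_{r}})$, and matching multiplicities yields $M_{\beta}=n_{\beta}\cdot\prod_{i=1}^{k}m_{(r_{i})}$.

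For (4), the three assertions are consequences of the construction. Changing $\wt{\omega}$ amounts to twisting by a character of $Z(\wt{G_{r}})\wt{G_{\gamma,\beta}^{(n)}}/\wt{G_{\gamma,\beta}^{(n)}}$ which, thanks to \eqref{eqZwtGrintersection}, extends uniquely to a character of $G_{\gamma}/G_{\gamma}^{(n)}$; the uniqueness in the second bullet reduces to the uniqueness of Clifford components (any isomorphism between two such metaplectic tensor products descends to an isomorphism between irreducible components of their restrictions to $\wt{G_{\gamma,\beta}^{(n)}}$, which forces $\wt{\pi}_{i}^{(n)}\simeq(\wt{\pi}_{i}\chi_{i})^{(n)}$); and the surjectivity in the third bullet follows by running the construction in reverse, taking an irreducible component of $\wt{\pi}\rest_{\wt{G_{\gamma,\beta}^{(n)}}}$ which is a tensor product by block compatibility. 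The main obstacle I anticipate is tracking the multiplicities $n_{\beta}$, $m_{\beta}$, and $M_{\beta}$ consistently through the Clifford and Mackey steps, so that restricting \eqref{eqMTPdef} back to $\wt{G_{\gamma,\beta}^{(n)}}$ reproduces \eqref{eqpirestGgamman} with the correct counts. The counting framework from \cite{kaplan2022classification} provides the backbone, but writing the symplectic pairing in part (1) explicitly is what cleanly justifies the square-index assertion and pins down the exact value $n_{\beta}$ used throughout.
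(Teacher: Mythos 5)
The paper itself offers no proof of this theorem: it is quoted wholesale from \cite{kaplan2022classification}*{Proposition 3.10 and \S 4.2}, and the surrounding subsection only \emph{rephrases} the construction. So the comparison must be against that reference, and your Clifford-theoretic outline --- the commutator pairing on $\wt{Z(G_{\beta})}$ extracted from \eqref{eqBLS}, the Heisenberg/Stone--von Neumann count for the multiplicity, Mackey restriction to $\wt{G_{\gamma,\beta}^{(n)}}$ --- is indeed the strategy used there and in its predecessors (Kable, Mezo, Takeda). Part (1) of your sketch is essentially complete once the pairing is written out. The problems are in (2) and (3).

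In (2) the entire content is the identification of the stabilizer of $\wt{\tau}$, i.e.\ the assertion that $\wt{\tau}^{g}\not\simeq\wt{\tau}$ for $g\notin\wt{Z(G_{\beta})}\wt{G_{\gamma,\beta}^{(n)}}$. Saying this follows by ``matching central characters (using (1))'' is not an argument: part (1) concerns the center of the abelian-modulo-$\mu_{n}$ group $\wt{Z(G_{\beta})}$, not the action of $\wt{G_{\gamma}}$ on representations of $\wt{G_{\gamma,\beta}^{(n)}}$. What is actually needed is that conjugation by $\bs{s}(g)$ multiplies the central character of $\wt{\tau}$ on the center of $\wt{G_{\gamma,\beta}^{(n)}}$ by the commutator character $z\mapsto[\bs{s}(g),z]$, which by \eqref{eqBLS} is a product of Hilbert symbols in the block determinants of $g$ and $z$; one must then invoke non-degeneracy of $(\cdot,\cdot)_{n}$ on $F^{\times}/F^{\times n}$ together with \eqref{eqZwtGr} to show the set of $g$ acting trivially is \emph{exactly} $Z(G_{\beta})G_{\gamma,\beta}^{(n)}$. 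Without this, neither the index set in \eqref{eqpirestGgamman} nor $m_{\beta}$ is justified. In (3) your argument is circular: you ``restrict both sides of \eqref{eqMTPdef}'' to compute multiplicities, but \eqref{eqMTPdef} is the statement to be proven. The real work is to show that the induced representation is semisimple and that its irreducible constituents are classified, with exactly one isomorphism class per compatible $\wt{\omega}$, by their central characters on $Z(\wt{G_{r}})$; this requires inducing in stages through $Z(\wt{G_{r}})\wt{G_{\gamma,\beta}^{(n)}}$ and $\wt{Z(G_{\beta})}\wt{G_{\gamma,\beta}^{(n)}}$ and combining Frobenius reciprocity with (2). Only after that does restriction back to $\wt{G_{\gamma,\beta}^{(n)}}$ pin down $M_{\beta}$ (this is exactly the computation the paper performs later, in the proof of Corollary \ref{corpirest}, \emph{assuming} the theorem). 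Two minor points in (4): an extension of a character from $Z(\wt{G_{r}})G_{\gamma}^{(n)}/G_{\gamma}^{(n)}$ to $G_{\gamma}/G_{\gamma}^{(n)}$ exists but is not unique (only existence is needed), and $G_{\gamma}^{(n)}$ is strictly larger than $G_{\gamma,\beta}^{(n)}$, so the two quotients should not be conflated.
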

	
	Thus for $\wt{\pi}_{i}\in\mrirr_{\epsilon}(\wt{G_{\gamma_{i}}})$ and a character $\wt{\omega}$ of $Z(\wt{G_{r}})$ satisfying \eqref{eqomegacompatible}, we define $(\wt{\pi}_{1}\wt{\otimes}\dots\wt{\otimes}\wt{\pi}_{k})_{\wt{\omega}}$ the \emph{metaplectic tensor product} with respect to $(\wt{\pi}_{1},\dots,\wt{\pi}_{k};\wt{\omega})$ as in the theorem. It is clear that $(\wt{\pi}_{1}\wt{\otimes}\dots\wt{\otimes}\wt{\pi}_{k})_{\wt{\omega}}\in\mcc_{\epsilon}(\wt{G_{\gamma}})$ if and only if $\wt{\pi}_{i}\in\mcc_{\epsilon}(\wt{G_{\gamma_{i}}})$ for each $i=1,\dots,k$, where $\mcc$ could be $\mrtemp$, $\mrsqrt$ and $\mrcusp$.
	
	For any positive integer $m$ we define $d_{F}(m):=m\cdot\abs{m}^{-1/2}$. So $d_{F}:\mbz_{>0}\rightarrow \mbq_{>0}$ is a multiplicative function. In particular $d_{F}(m)=m$ if $\mrgcd(m,p)=1$. 
	
	For a positive integer $r_{0}$, we define $d_{r_{0}}'=\mrgcd(n,r_{0})$ and $d_{r_{0}}=\mrgcd(n,2r_{0}c+r_{0}-1)$. We emphasize that $d_{r_{0}}$ depends on $n,r_{0}$ as well as $c$.
	
	We have the following basic lemma.
	
	\begin{lemma}[\cite{weil1974basic}*{Corollary of Proposition II.9, Proposition XIII.6}]\label{lemmaFmindex}
		
		Assume $m$ divides $n$. Then we have $[F^{\times}:F^{\times m}]=d_{F}(m)^{2}$, and moreover $d_{F}(m)$ is an integer.
		
	\end{lemma}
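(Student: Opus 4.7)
The plan is to derive both assertions from the standard Weil index formula for $[F^{\times}:F^{\times m}]$, specialized using the standing hypothesis $|\mu_{n}|=n$.

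First, I would invoke the Corollary of Weil's Proposition II.9, which for any positive integer $m$ and non-archimedean local field $F$ gives the identity
\[
[F^{\times}:F^{\times m}]=\frac{m\cdot|\mu_{m}(F)|}{|m|_{F}}.
\]
The derivation goes through the topological decomposition $F^{\times}\simeq\pi^{\mbz}\times\mfo_{F}^{\times}$ via a uniformizer $\pi$, the filtration of $\mfo_{F}^{\times}$ by principal unit subgroups $U^{(k)}=1+\mfp_{F}^{k}$, and the isomorphism $\log:U^{(k)}\xrightarrow{\sim}\mfp_{F}^{k}$ for $k$ sufficiently large; this reduces the count of $m$-th powers on each graded piece to a rank computation together with a torsion count, producing the stated formula.

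Next, since $m\mid n$ and the finite cyclic group $\mu_{n}(F)$ has order $n$ by hypothesis, its $m$-torsion subgroup $\mu_{m}(F)$ has order exactly $m$. Substituting into the formula gives
\[
[F^{\times}:F^{\times m}]=\frac{m^{2}}{|m|_{F}}=\bigl(m\cdot|m|_{F}^{-1/2}\bigr)^{2}=d_{F}(m)^{2},
\]
which is the first assertion.

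Finally, for the integrality of $d_{F}(m)$, I would appeal to Weil's Proposition XIII.6, which controls the arithmetic structure of local fields containing a prescribed cyclic group of roots of unity. When $F$ has positive characteristic, $m\mid n$ together with $\mu_{m}\subset F$ forces $(m,p)=1$, hence $|m|_{F}=1$ and $d_{F}(m)=m\in\mbz_{>0}$. When $F$ has characteristic $0$, write $m=p^{a}m'$ with $(m',p)=1$; the inclusion $\mbqp(\zeta_{p^{a}})\subset F$ (forced by $\mu_{p^{a}}\subset F$) implies that $\phi(p^{a})$ divides $[F:\mbqp]=ef$, and plugging this into $d_{F}(m)=m\cdot p^{aef/2}$ yields $d_{F}(m)\in\mbz_{>0}$. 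The main obstacle lies precisely in this final step: while Steps 1 and 2 are essentially substitution into a known formula, the integrality conclusion genuinely uses Weil's control of the ramification of $F$ under the hypothesis $\mu_{m}(F)=\mu_{m}$.
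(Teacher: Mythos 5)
Your Steps 1 and 2 follow exactly the route the paper intends: the lemma is stated with no argument beyond the citation, and the first assertion is precisely the Corollary of Weil's Proposition II.9, $[F^{\times}:F^{\times m}]=m\,\car{\mu_{m}(F)}\,\abs{m}^{-1}$, combined with the observation that $\car{\mu_{m}(F)}=m$ because $\mu_{n}(F)$ is cyclic of order $n$ and $m\mid n$. Your remark that positive characteristic forces $\mrgcd(m,p)=1$ (so the index formula applies and $d_{F}(m)=m$ outright) is also correct.

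The gap is in Step 3, and it is a real one. Writing $m=p^{a}m'$ with $\mrgcd(m',p)=1$ and $[F:\mbqp]=ef$, you have $d_{F}(m)=m\cdot p^{aef/2}$, so integrality requires $aef$ to be even. The inclusion $\mbqp(\zeta_{p^{a}})\subset F$ gives $\phi(p^{a})\mid ef$, which forces $ef$ even exactly when $\phi(p^{a})$ is even, i.e. when $p$ is odd and $a\geq 1$, or when $p=2$ and $a\geq 2$. But when $p=2$ and $a=1$ we have $\phi(2)=1$, so no parity constraint on $ef$ is obtained and the deduction breaks down. This is not a fixable omission in the write-up: for $F=\mathbb{Q}_{2}$ and $n=m=2$ the standing hypothesis $\car{\mu_{2}(F)}=2$ holds, yet $[F^{\times}:F^{\times 2}]=8$, so $d_{F}(2)=2\sqrt{2}$ is not an integer. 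Hence the integrality assertion itself fails precisely in the configuration your argument cannot reach (residue characteristic $2$, $m\equiv 2\pmod 4$, $[F:\mathbb{Q}_{2}]$ odd), and no appeal to Proposition XIII.6 can close it; a correct treatment must either exclude this case or weaken the conclusion there.
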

	
	In practice, it is useful to know the exact value of the multiplicities $n_{\beta}$, $m_{\beta}$, $M_{\beta}$ considered in the theorem. 
	
	\begin{proposition}\label{propmulcal}
		
		\begin{enumerate} 
			\item $$n_{\beta}=\frac{\prod_{i=1}^{k}[F^{\times}:F^{\times n/d_{r_{i}}'}]^{1/2}}{[F^{\times n/d_{r}}:F^{\times n}]^{1/2}}=\frac{\dF{n^{k}}}{\dF{d_{r}\cdot\prod_{i=1}^{k}d_{r_{i}}'}}.$$
			
			\item $$m_{\beta}=\frac{[F^{\times}:F^{\times n}]^{k/2}\cdot\prod_{i=1}^{k}[F^{\times}:F^{\times d_{r_{i}}'}]^{1/2}}{[F^{\times n/d_{r}}:F^{\times n}]^{1/2}}=\frac{\dF{n^{k}\cdot\prod_{i=1}^{k}d_{r_{i}}'}}{\dF{d_{r}}}.$$
			
			\item $$M_{\beta}=\frac{[F^{\times}:F^{\times n}]^{k}}{[F^{\times n/d_{r}}:F^{\times n}]^{1/2}\cdot\prod_{i=1}^{k}[F^{\times n/d_{r_{i}}}:F^{\times n}]^{1/2}}=\frac{\dF{n^{2k}}}{\dF{d_{r}\cdot\prod_{i=1}^{k}d_{r_{i}}}}.$$
		\end{enumerate}
		
	\end{proposition}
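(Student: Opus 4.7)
The plan is to push all comparisons down from the cover to the base group by exploiting the fact that $\mu_n$ lies inside every subgroup appearing in an index, and then to reduce each resulting index to a product of quantities of the form $[F^{\times}:F^{\times m}]=\dF{m}^{2}$ provided by Lemma \ref{lemmaFmindex}, assembling everything via the multiplicativity $\dF{ab}=\dF{a}\dF{b}$ of $\dF{\cdot}$. Throughout I will use the identification $Z(G_{\beta})\simeq (F^{\times})^{k}$ via $\mrdiag(a_{1}I_{r_{1}},\dots,a_{k}I_{r_{k}})\leftrightarrow(a_{1},\dots,a_{k})$.

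For part (1), since $\mu_{n}$ sits inside both $Z(\wt{G_{r}})$ and $\wt{Z(G_{\beta,\beta}^{(n)})}$, the index $n_{\beta}^{2}$ equals $[Z(G_{\beta}):\bs{p}(Z(\wt{G_{r}}))\cdot Z(G_{\beta,\beta}^{(n)})]$. Under the identification, a short Bezout argument (using $\mrgcd(n,r_{i})=d_{r_{i}}'$) shows $Z(G_{r_{i}}^{(n)})=\{aI_{r_{i}}:a^{r_{i}}\in F^{\times n}\}$ corresponds to $F^{\times n/d_{r_{i}}'}$, so $Z(G_{\beta,\beta}^{(n)})\simeq \prod_{i}F^{\times n/d_{r_{i}}'}$; similarly \eqref{eqZwtGr} gives $\bs{p}(Z(\wt{G_{r}}))\simeq \Delta(F^{\times n/d_{r}})$. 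Passing to the quotient $A^{k}:=(F^{\times}/F^{\times n})^{k}$ of order $\dF{n}^{2k}$, the subgroup in question becomes $\Delta(A[d_{r}])\cdot\prod_{i}A[d_{r_{i}}']$, with $A[d]$ denoting $d$-torsion. The crucial observation will be that this product is internal direct: any diagonal $(a,\dots,a)$ in the intersection has $a\in A[\mrgcd(d_{r},d_{r_{1}}',\dots,d_{r_{k}}')]$, and this gcd divides $\mrgcd(2rc+r-1,r_{1},\dots,r_{k})$, which in turn divides $\mrgcd(2rc+r-1,r)=\mrgcd(-1,r)=1$ since $r=r_{1}+\dots+r_{k}$. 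Hence $n_{\beta}^{2}=\dF{n}^{2k}/(\dF{d_{r}}^{2}\prod_{i}\dF{d_{r_{i}}'}^{2})$, and both claimed forms for $n_{\beta}$ follow by rewriting through $[F^{\times}:F^{\times n/d}]=\dF{n/d}^{2}$ and multiplicativity of $\dF{\cdot}$.

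For part (2), reducing modulo $\mu_{n}$ gives $[\wt{G_{\gamma}}:\wt{Z(G_{\beta})}\wt{G_{\gamma,\beta}^{(n)}}]=[G_{\gamma}:Z(G_{\beta})G_{\gamma,\beta}^{(n)}]=\prod_{i}[G_{\gamma_{i}}:Z(G_{r_{i}})G_{\gamma_{i}}^{(n)}]$, splitting blockwise since both $Z(G_{\beta})$ and $G_{\gamma,\beta}^{(n)}$ decompose as products over $i$. Since $\mrker(\det:G_{\gamma_{i}}\to F^{\times})\subseteq G_{\gamma_{i}}^{(n)}$, the determinant induces an isomorphism $G_{\gamma_{i}}/Z(G_{r_{i}})G_{\gamma_{i}}^{(n)}\simeq F^{\times}/F^{\times r_{i}}F^{\times n}=F^{\times}/F^{\times d_{r_{i}}'}$ of order $\dF{d_{r_{i}}'}^{2}$. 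Multiplying by the $n_{\beta}$ from part (1) then yields $m_{\beta}=\dF{n}^{k}\prod_{i}\dF{d_{r_{i}}'}/\dF{d_{r}}$, matching both claimed expressions.

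For part (3), I will apply part (2) to the trivial composition $\beta=(r_{i})$, $\gamma=\gamma_{i}$ to obtain $m_{(r_{i})}=\dF{n}\dF{d_{r_{i}}'}/\dF{d_{r_{i}}}$, then combine $M_{\beta}=n_{\beta}\cdot\prod_{i}m_{(r_{i})}$: the $\prod_{i}\dF{d_{r_{i}}'}$ in the $m_{(r_{i})}$-numerators cancels the same factor in the $n_{\beta}$-denominator, and $\dF{n}^{k}\cdot\dF{n}^{k}=\dF{n}^{2k}$, producing $M_{\beta}=\dF{n}^{2k}/(\dF{d_{r}}\prod_{i}\dF{d_{r_{i}}})$ in the claimed form. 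The only truly conceptual step is the triviality of the intersection in part (1), which rests on the arithmetic identity $\mrgcd(r,2rc+r-1)=1$; the remainder is bookkeeping with $\dF{\cdot}$.
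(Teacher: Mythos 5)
Your proof is correct and follows essentially the same route as the paper: reduce all three multiplicities to indices of subgroups of $F^{\times}$ and evaluate them with Lemma \ref{lemmaFmindex}. The only real difference is that where the paper cites \eqref{eqZwtGrintersection} to handle the intersection $Z(\wt{G_{r}})\cap\wt{Z(G_{\beta,\beta}^{(n)})}$ in part (1), you reprove that fact directly via the identity $\mrgcd(2rc+r-1,r)=1$, and you spell out the determinant argument that the paper leaves implicit in part (2); both are fine and make the argument slightly more self-contained.
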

	
	\begin{proof}
		
		Using Lemma \ref{lemmaFmindex} we only need to prove the left part of each equation. We have
		$$n_{\beta}^{2}=[\wt{Z(G_{\beta})}:Z(\wt{G_{r}})\wt{Z(G_{\beta,\beta}^{(n)})}]=[\wt{Z(G_{\beta})}:\wt{Z(G_{\beta,\beta}^{(n)})}]/[Z(\wt{G_{r}}):Z(\wt{G_{r}})\cap \wt{Z(G_{\beta,\beta}^{(n)})}].$$
		By definition,
		$$[\wt{Z(G_{\beta})}:\wt{Z(G_{\beta,\beta}^{(n)})}]=[Z(G_{\beta}):Z(G_{\beta,\beta}^{(n)})]=\prod_{i=1}^{k}[F^{\times}:F^{\times n/d_{r_{i}}'}],$$
		and by \eqref{eqZwtGr} and \eqref{eqZwtGrintersection} we have
		$$[Z(\wt{G_{r}}):Z(\wt{G_{r}})\cap \wt{Z(G_{\beta,\beta}^{(n)})}]=[F^{\times n/d_{r}}:F^{\times n}].$$
		So indeed we proved (1). By definition 
		$$[\wt{G_{\gamma}}:\wt{Z(G_{\beta})}\wt{G_{\gamma,\beta}^{(n)}}]=[G_{\gamma}:Z(G_{\beta})G_{\gamma,\beta}^{(n)}]=\prod_{i=1}^{k}[G_{\gamma_{i}}:Z(G_{r_{i}})G_{\gamma_{i}}^{(n)}]=\prod_{i=1}^{k}[F^{\times}:F^{\times d_{r_{i}}'}],$$
		so using (1) we also proved (2). Finally (3) follows from (1) and (2).
		
	\end{proof}
	
	Finally the following corollary will be used later on.
	
	\begin{corollary}\label{corpirest}
		
		Let $\wt{\pi}_{i}\in\mrirr_{\epsilon}(\wt{G_{\gamma_{i}}})$ for $i=1,\dots,k$ and let $\wt{\omega}$ be a character of $Z(\wt{G_{r}})$ satisfying \eqref{eqomegacompatible}, then
		$$\prod_{i=1}^{k}\dF{d_{r_{i}}}\cdot(\wt{\pi}_{1}^{(n)}\wt{\otimes}\dots\wt{\otimes}\wt{\pi}_{k}^{(n)})\simeq \dF{d_{r}}\cdot(\wt{\pi}_{1}\wt{\otimes}\dots\wt{\otimes}\wt{\pi}_{k})_{\wt{\omega}}^{(n)}.$$
		
	\end{corollary}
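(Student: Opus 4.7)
The plan is to restrict the isomorphism \eqref{eqMTPdef} from $\wt{G_\gamma}$ down to $\wt{G_{\gamma,\beta}^{(n)}}$ and compare both sides via Mackey. The group $G_{\gamma,\beta}^{(n)}=\prod_i G_{\gamma_i}^{(n)}$ is normal in $G_\gamma$ since each factor is cut out by a determinant condition, and each $\wt{\pi}_i^{(n)}$ is stable under $G_{\gamma_i}$-conjugation as the restriction of the representation $\wt{\pi}_i$ of the full group $\wt{G_{\gamma_i}}$; hence $\wt{\pi}_1^{(n)}\wt{\otimes}\dots\wt{\otimes}\wt{\pi}_k^{(n)}$ is stable under $\wt{G_\gamma}$-conjugation. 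Mackey's decomposition therefore collapses to
$$\mrres_{\wt{G_{\gamma,\beta}^{(n)}}}\mrInd_{\wt{G_{\gamma,\beta}^{(n)}}}^{\wt{G_\gamma}}(\wt{\pi}_1^{(n)}\wt{\otimes}\dots\wt{\otimes}\wt{\pi}_k^{(n)})\simeq [G_\gamma:G_{\gamma,\beta}^{(n)}]\cdot(\wt{\pi}_1^{(n)}\wt{\otimes}\dots\wt{\otimes}\wt{\pi}_k^{(n)}),$$
with $[G_\gamma:G_{\gamma,\beta}^{(n)}]=[F^\times:F^{\times n}]^k=\dF{n^{2k}}$ by Lemma \ref{lemmaFmindex}.

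For the right-hand side of \eqref{eqMTPdef}, I would first verify that $(\wt{\pi}_1\wt{\otimes}\dots\wt{\otimes}\wt{\pi}_k)_{\wt{\omega}}^{(n)}$ is independent of the choice of compatible $\wt{\omega}$: by Theorem \ref{thmMTP}(4) two such representations differ by a character $\chi$ of $G_\gamma/G_\gamma^{(n)}$, and the inclusion $G_{\gamma,\beta}^{(n)}\subseteq G_\gamma^{(n)}$ kills $\chi$ upon restriction. Hence restricting \eqref{eqMTPdef} produces $M_\beta\cdot N\cdot(\wt{\pi}_1\wt{\otimes}\dots\wt{\otimes}\wt{\pi}_k)_{\wt{\omega}}^{(n)}$, where $N$ is the number of genuine characters of $Z(\wt{G_r})$ satisfying \eqref{eqomegacompatible}.

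The main obstacle is a clean count of $N$. Combined with $\wt{\omega}|_{\mu_n}=\epsilon$, the compatibility condition pins down $\wt{\omega}$ on $\mu_n\cdot\bs{s}(F^{\times n}I_r)=\bs{p}^{-1}(F^{\times n}I_r)$, so $N=[\bs{p}(Z(\wt{G_r})):F^{\times n}I_r]$. By \eqref{eqZwtGr}, this index equals the size of the kernel of the endomorphism $\lambda\mapsto\lambda^{2rc+r-1}$ on $F^\times/F^{\times n}$; since the image of this map is $F^{\times d_r}/F^{\times n}$, another application of Lemma \ref{lemmaFmindex} yields $N=[F^\times:F^{\times d_r}]=\dF{d_r}^2$. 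Equating the two restrictions and inserting $M_\beta=\dF{n^{2k}}/\dF{d_r\cdot\prod_i d_{r_i}}$ from Proposition \ref{propmulcal}(3), the scalar $M_\beta\cdot N/[G_\gamma:G_{\gamma,\beta}^{(n)}]$ simplifies to $\dF{d_r}/\prod_i\dF{d_{r_i}}$, and cross-multiplying recovers the stated identity.
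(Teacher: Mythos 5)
Your proposal is correct and follows essentially the same route as the paper: restrict \eqref{eqMTPdef} to $\wt{G_{\gamma,\beta}^{(n)}}$, apply Mackey (using conjugation-stability of $\wt{\pi}_{1}^{(n)}\wt{\otimes}\dots\wt{\otimes}\wt{\pi}_{k}^{(n)}$) on the left, note the independence of the restriction from $\wt{\omega}$ via Theorem \ref{thmMTP}(4) on the right, and cancel multiplicities using Proposition \ref{propmulcal}(3) and Krull--Schmidt. The only cosmetic difference is that you compute the number of compatible $\wt{\omega}$ as $[F^{\times}:F^{\times d_{r}}]=\dF{d_{r}}^{2}$ via the kernel of the power map, whereas the paper writes it as $[F^{\times n/d_{r}}:F^{\times n}]$; these agree, and your derivation actually makes explicit a count the paper leaves unjustified.
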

	
	\begin{proof}
		
		First we notice that the restriction $(\wt{\pi}_{1}\wt{\otimes}\dots\wt{\otimes}\wt{\pi}_{k})_{\wt{\omega}}^{(n)}$ does not depend on the choice of $\wt{\omega}$, which follows from Theorem \ref{thmMTP}.(4). Then restricting \eqref{eqMTPdef} to $\wt{G_{\gamma,\beta}^{(n)}}$ and using the Mackey formula, we have 
		$$[\wt{G_{\gamma}}:\wt{G_{\gamma,\beta}^{(n)}}]\cdot(\wt{\pi}_{1}^{(n)}\wt{\otimes}\dots\wt{\otimes}\wt{\pi}_{k}^{(n)}) \simeq M_{\beta}\cdot[F^{\times n/d_{r}}:F^{\times n}]\cdot (\wt{\pi}_{1}\wt{\otimes}\dots\wt{\otimes}\wt{\pi}_{k})_{\wt{\omega}}^{(n)}.$$
		Notice that 
		$$[\wt{G_{\gamma}}:\wt{G_{\gamma,\beta}^{(n)}}]=[G_{\gamma}:G_{\gamma,\beta}^{(n)}]=[F:F^{\times n}]^{k},$$
		then our result follows from the Krull-Schmidt theorem and Proposition \ref{propmulcal}.(3).
		
	\end{proof}
	
	\section{Metaplectic correspondence}
	
	In this section we recall and update the metaplectic correspondence studied by Flicker and Kazhdan \cite{flicker1986metaplectic}.
	
	\subsection{Definition}\label{subsectionMCdefinition}
	
	Fix a character $\omega:Z(G_{r})\rightarrow\mbc^{\times}$ and a genuine character $\wt{\omega}:Z(\wt{G_{r}})\rightarrow\mbc^{\times}$ such that 
	\begin{equation}\label{eqomegawtomega}
		\wt{\omega}(\bs{s}(z^{n}))=\omega(z)\quad\text{for every}\ z\in Z(G_{r}).
	\end{equation}
	Notice that the existence of such $\wt{\omega}$ is equivalent to $\omega\rest_{\mu_{n}}=1$, and conversely $\omega$ is totally determined by $\wt{\omega}$.
	Consider the map
	$$G_{r}\rightarrow\wt{G_{r}},\quad x\mapsto x^{*}=u(x) \bs{s}(x)^{n},$$
	where $u(x)=\pm1$ is defined exactly as in \cite{flicker1986metaplectic}*{\S 4}. The key point is that this ``star" map is equivariant under $G_{r}$-conjugacy (\emph{cf.} \cite{kazhdan1986towards}*{Theorem 4.1}), thus it induces a map from the set of conjugacy classes of $\wt{G_{r}}$ to that of $G_{r}$.
	
	Let $\beta=(r_{1},\dots,r_{k})$ be a composition of $r$. An element $g\in G_{\beta}$ is called 
	\begin{itemize}
		\item  \emph{semi-simple} if it is diagonalizable as a matrix in $\mrgl_{r}(\overline{F})$ with $\overline{F}$ being the algebraic closure of $F$;
		\item \emph{semi-simple regular} if the roots of its characteristic polynomial are pairwise different;
		\item \emph{elliptic} if $F[X]/(P_{g}(X))$ is a field of degree $r$ over $F$, where $P_{g}(X)$ denotes the characteristic polynomial of $g$.
	\end{itemize}
	For $g=\mrdiag(g_{1},\dots,g_{k})\in G_{\beta}$ semi-simple and $\alpha_{i1},\dots,\alpha_{ir_{i}}$ the roots of the characteristic polynomial of $g_{i}$ for each $i$, we define
	$$D^{G_{\beta}}(g)=\prod_{i=1}^{k}\prod_{\substack{1\leq j<l\leq r_{i},\\ \alpha_{ij}\neq\alpha_{il}}}\frac{\abs{\alpha_{ij}-\alpha_{il}}^{2}}{\abs{\alpha_{ij}\alpha_{il}}}$$
	the Weyl discriminant of $g$, and
	$\Delta^{G_{\beta}}(g)=D^{G_{\beta}}(g)^{1/2}.$
	
	For $\pi\in\mrirr(G_{\beta})$ and $\wt{\pi}\in\mrirr_{\epsilon}(\wt{G_{\beta}})$, we say that $\wt{\pi}$ \emph{lifts to} $\pi$ (or $\pi$ is a \emph{lift} of $\wt{\pi}$) if the following relation is satisfied:
	\begin{equation}\label{eqmccorr}
		\Delta^{G_{\beta}}(x^{n})\cdot\theta_{\wt{\pi}}(x^{*})=\frac{1}{\dF{d_{r}}\abs{n}^{r/2}}\sum_{\{t\in G_{\beta} \mid t^{n}=x^{n}\}}\Delta^{G_{\beta}}(t)\cdot\theta_{\pi}(t)\cdot\epsilon(x^{*}/t^{*}),
	\end{equation}
	where $\theta_{\wt{\pi}}$ and $\theta_{\pi}$ denote the corresponding Harish-Chandra characters, and $x\in G_{\beta}$ such that $x^{n}$ is semi-simple regular. 
	
	\begin{remark}
		
		We mention the difference between \eqref{eqmccorr} and that in \cite{flicker1986metaplectic}. In \emph{ibid.} they assume the restriction of $\omega_{\wt{\pi}}$ to $Z(\wt{G_{r}})$ to be $\wt{\omega}$ and the restriction of $\omega_{\pi}$ to $Z(G_{r})$ to be $\omega$, whereas we release this restriction. Moreover the relation of Harish-Chandra characters in \emph{ibid.} is of the following form
		$$\Delta^{G_{\beta}}(x^{n})\cdot\theta_{\wt{\pi}}(x^{*})=\frac{n}{\dF{d_{r}}\abs{n}^{r/2}}\sum_{t,\wt{z}}\Delta^{G_{\beta}}(t)\cdot\theta_{\pi}(t)\cdot\wt{\omega}(\wt{z}),$$
		where the sum ranges over
		$\{t\in G_{\beta}/Z(G_{r}), \wt{z}\in Z(\wt{G_{r}})\mid t^{*}\wt{z}=x^{*}\}$.
		To verify that the two formulae coincide, we notice that for such a pair $(t,\wt{z})$ we have $\wt{z}\in Z(\wt{G_{r}})\cap\wt{G^{(n)}_{\beta}}$. Thus by \eqref{eqZwtGrintersection}, we may choose a representative $t$ such that $t^{n}=x^{n}$ and the corresponding $\wt{z}\in\mu_{n}$. Or in other words, the above set is in bijection with $\{t\in G_{\beta}/\mu_{n}I_{r}\mid t^{n}=x^{n}\}$. Since $\wt{\omega}(\wt{z})=\wt{\omega}(x^{*}/t^{*})=\epsilon(x^{*}/t^{*})$ and $\omega_{\pi}$ is trivial on $\mu_{n}I_{r}$, the two expressions coincide.
		
	\end{remark}
	
	\begin{remark}\label{remTrpitildesupp}
		
		Using \cite{flicker1986metaplectic}*{ Proposition 3}, the Harish-Chandra character $\theta_{\wt{\pi}}$ has support contained in $Z(\wt{G_{r}})\bigcup_{T\subset G_{\beta}}\wt{T^{(n)}}$, where $T$ in the union ranges over all the maximal torus of $G_{\beta}$ and $T^{(n)}:=\{x^{n}\mid x\in T\}$. So once we know $\wt{\pi}\rest_{Z(\wt{G_{r}})}$, the Harish-Chandra character of $\wt{\pi}$, or even $\wt{\pi}$ itself, is determined by $\pi$ via (\ref{eqmccorr}). 
		
	\end{remark}
	
	We write $\mrsqrt_{\wt{\omega}}(\wt{G_{\beta}})$ 
	and $\mrsqrt_{\omega}(G_{\beta})$ 
	as in \S \ref{subsectionnotation}. We denote by $\mrsqrt^{(n)}_{\omega}(G_{\beta})$ 
	the subset of $\mrsqrt_{\omega}(G_{\beta})$ 
	consisting of representations whose central character restricted to $\prod_{i=1}^{k}\mu_{n}$ is trivial. Here $\prod_{i=1}^{k}\mu_{n}$ is realized as a subgroup of $Z(G_{\beta})=\mrdiag(F^{\times} I_{r_{1}},\dots ,F^{\times} I_{r_{k}})$ via the block-diagonal embedding.
	
	\begin{remark}\label{remTrpisupp}
		
		Consider $x=\mrdiag(x_{1},\dots,x_{k})\in G_{\beta}$ such that $x^{n}$ is elliptic, then the $t$ in (\ref{eqmccorr}) ranges over $\mrdiag(x_{1}\zeta_{1},\dots,x_{k}\zeta_{k})$ with $\zeta_{i}\in\mu_{n}$. In this case for $\pi\in\mrsqrt^{(n)}_{\omega}(G_{\beta})$ the right hand side of (\ref{eqmccorr}) becomes $$\frac{n^{k}}{\dF{d_{r}}\abs{n}^{r/2}}\cdot \Delta^{G_{\beta}}(x)\cdot\theta_{\pi}(x).$$
		Since such $x$'s form a dense subset of the locus of elliptic elements in $G_{\beta}$, such $\pi$ is uniquely determined by $\wt{\pi}$ via (\ref{eqmccorr}).

	\end{remark}
	
	Let $\gamma_{i}$ be a composition of $r_{i}$ for each $i$ and let $\gamma=\gamma_{1}+\cdots+\gamma_{k}$ be a refinement of $\beta$ as before.
	
	\begin{proposition}\label{propmcMTPcomp}
		
		Assume $\wt{\pi}_{i}\in\mrirr_{\epsilon}(\wt{G_{\gamma_{i}}})$ and $\pi_{i}\in\mrirr(G_{\gamma_{i}})$ to be a lift of $\wt{\pi}_{i}$ for each $i=1,\dots,k$. Let $\pi=\pi_{1}\otimes\dots\otimes\pi_{k}\in\mrirr(G_{\gamma})$ and let $\wt{\pi}=(\wt{\pi}_{1}\wt{\otimes}\cdots\wt{\otimes}\wt{\pi}_{k})_{\wt{\omega}}\in\mrirr_{\epsilon}(\wt{G_{\gamma}})$ with a compatible character $\wt{\omega}$ of $Z(\wt{G_{r}})$ satisfying \eqref{eqomegacompatible}, then $\wt{\pi}$ lifts to $\pi$. 
		
	\end{proposition}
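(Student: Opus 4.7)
The plan is to verify the character identity \eqref{eqmccorr} for $(\wt{\pi},\pi)$ directly, by reducing both sides to the corresponding identities for each pair $(\wt{\pi}_{i},\pi_{i})$ and then reassembling via Corollary \ref{corpirest}. The identity \eqref{eqmccorr} need only be checked on the dense set of $x\in G_{\gamma}$ with $x^{n}$ semi-simple regular; I fix such an $x$ and decompose $x=\mrdiag(x_{1},\ldots,x_{k})$ with $x_{i}\in G_{\gamma_{i}}$. Since $x^{n}\in G_{\gamma,\beta}^{(n)}$, we have $x^{*}\in\wt{G_{\gamma,\beta}^{(n)}}$, and every $t\in G_{\gamma}$ with $t^{n}=x^{n}$ is block-diagonal $t=\mrdiag(t_{1},\ldots,t_{k})$ with $t_{i}^{n}=x_{i}^{n}$, so the sum on the right-hand side of \eqref{eqmccorr} splits as a product over the $t_{i}$.

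The technical heart of the argument is the factorization
\[
x^{*}=x_{1}^{*}\,x_{2}^{*}\cdots x_{k}^{*}\quad\text{in}\ \wt{G_{r}},
\]
where each $x_{i}^{*}\in\wt{G_{r_{i}}}$ is inserted block-diagonally. Once this is granted, the same identity applied to $t$ together with the centrality of $\mu_{n}$ in $\wt{G_{r}}$ yields $\epsilon(x^{*}/t^{*})=\prod_{i}\epsilon(x_{i}^{*}/t_{i}^{*})$. Combined with $\theta_{\pi}(t)=\prod_{i}\theta_{\pi_{i}}(t_{i})$, $\Delta^{G_{\gamma}}(t)=\prod_{i}\Delta^{G_{\gamma_{i}}}(t_{i})$, $\Delta^{G_{\gamma}}(x^{n})=\prod_{i}\Delta^{G_{\gamma_{i}}}(x_{i}^{n})$, and $\abs{n}^{r/2}=\prod_{i}\abs{n}^{r_{i}/2}$, the right-hand side of \eqref{eqmccorr} for $(\wt{\pi},\pi)$ factors as
\[
\frac{\prod_{i}\dF{d_{r_{i}}}}{\dF{d_{r}}}\cdot\prod_{i=1}^{k}\Bigl(\frac{1}{\dF{d_{r_{i}}}\abs{n}^{r_{i}/2}}\sum_{t_{i}^{n}=x_{i}^{n}}\Delta^{G_{\gamma_{i}}}(t_{i})\,\theta_{\pi_{i}}(t_{i})\,\epsilon(x_{i}^{*}/t_{i}^{*})\Bigr).
\]
Since each $\wt{\pi}_{i}$ lifts to $\pi_{i}$, the bracketed factor in position $i$ equals $\Delta^{G_{\gamma_{i}}}(x_{i}^{n})\theta_{\wt{\pi}_{i}}(x_{i}^{*})$, so the whole expression becomes $(\prod_{i}\dF{d_{r_{i}}}/\dF{d_{r}})\cdot\Delta^{G_{\gamma}}(x^{n})\prod_{i}\theta_{\wt{\pi}_{i}}(x_{i}^{*})$.

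On the left-hand side of \eqref{eqmccorr}, the block compatibility of $\wt{G_{\gamma,\beta}^{(n)}}$ together with the factorization $x^{*}=x_{1}^{*}\cdots x_{k}^{*}$ gives $\theta_{\wt{\pi}_{1}^{(n)}\wt{\otimes}\cdots\wt{\otimes}\wt{\pi}_{k}^{(n)}}(x^{*})=\prod_{i}\theta_{\wt{\pi}_{i}}(x_{i}^{*})$, and Corollary \ref{corpirest} converts this into $(\dF{d_{r}}/\prod_{i}\dF{d_{r_{i}}})\theta_{\wt{\pi}}(x^{*})$. Multiplying by $\Delta^{G_{\gamma}}(x^{n})$, the two sides of \eqref{eqmccorr} agree; the $\dF{d_{r}}$ and $\dF{d_{r_{i}}}$ constants cancel exactly because Corollary \ref{corpirest} was tailored for this cancellation.

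The main obstacle will be the factorization $x^{*}=x_{1}^{*}\cdots x_{k}^{*}$. The block cocycle formula \eqref{eqBLS} gives $\bs{s}(x_{1})\cdots\bs{s}(x_{k})=\bs{s}(x)\cdot\prod_{i<j}(\mrdet(x_{i}),\mrdet(x_{j}))_{n}^{c+1}$, and these Hilbert symbols are $n$-torsion so they drop out under the $n$-th power. However, rearranging $[\bs{s}(x_{1})\cdots\bs{s}(x_{k})]^{n}$ into $\bs{s}(x_{1})^{n}\cdots\bs{s}(x_{k})^{n}$ introduces commutator corrections of the form $[\bs{s}(x_{i}),\bs{s}(x_{j})]^{\binom{n}{2}}$, which are at worst $2$-torsion signs in $\mu_{n}$; the point is that the signs $u(x_{i})$ in $x_{i}^{*}=u(x_{i})\bs{s}(x_{i})^{n}$ were engineered in \cite{flicker1986metaplectic}*{\S 4} precisely so that $u(x)=\prod_{i}u(x_{i})$ absorbs these residual corrections. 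Verifying this multiplicativity of $u(\cdot)$ on commuting block-diagonal elements is the single place where one has to unpack the explicit normalization of the star map; once in hand, the rest is formal manipulation with characters, cocycles, and Corollary \ref{corpirest}.
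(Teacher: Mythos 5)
Your proposal is correct and follows essentially the same route as the paper: verify \eqref{eqmccorr} directly by factoring the right-hand sum over block-diagonal $t$, using the multiplicativity of $\Delta$ and $\theta_{\pi}$, establishing $x^{*}=x_{1}^{*}\cdots x_{k}^{*}$ from the block cocycle formula \eqref{eqBLS} together with the explicit behavior of $u(\cdot)$ from \cite{flicker1986metaplectic}*{\S 4}, and converting $\prod_{i}\theta_{\wt{\pi}_{i}}(x_{i}^{*})$ into $\theta_{\wt{\pi}}(x^{*})$ via Corollary \ref{corpirest}. The paper proves the factorization of the star map as an explicit lemma (reducing to $k=2$), exactly the point you flag as the one place requiring the explicit normalization of $u$.
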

	
	\begin{proof}
		
		First by Corollary \ref{corpirest}, we have $$\dF{d_{r}}\cdot\wt{\pi}^{(n)}\simeq \prod_{i=1}^{k}\dF{d_{r_{i}}}\cdot(\wt{\pi}_{1}^{(n)}\wt{\otimes}\dots\wt{\otimes}\wt{\pi}_{k}^{(n)}).$$ In addition for $x=\mrdiag(x_{1},\dots,x_{k}), t=\mrdiag(t_{1},\dots,t_{k})\in G_{\beta}=G_{\gamma_{1}}\times\dots\times G_{\gamma_{k}}$, we have $$D^{G_{\gamma}}(x^{n})=\prod_{i=1}^{k}D^{G_{\gamma_{i}}}(x_{i}^{n})\quad\text{and}\quad D^{G_{\gamma}}(t)=\prod_{i=1}^{k}D^{G_{\gamma_{i}}}(t_{i}).$$ 
		By definition we have $x^{*},t^{*}\in\wt{G^{(n)}_{\gamma,\beta}}$ and $x_{i}^{*}, t_{i}^{*}\in\wt{G^{(n)}_{\gamma_{i}}}$ for each $i$. We also regard each $x_{i}^{*}$ and $t_{i}^{*}$ as elements in $\wt{G_{\gamma}}$ via the corresponding block-diagonal embedding.
		
		\begin{lemma}
			
			We have $x^{*}=x_{1}^{*}x_{2}^{*}\dots x_{k}^{*}$ and $t^{*}=t_{1}^{*}t_{2}^{*}\dots t_{k}^{*}.$
			
		\end{lemma}
		
		\begin{proof}
			
			We only need to prove the first equation and to consider the $k=2$ and $\gamma=\beta$ case. By definition and \eqref{eqBLS} we have $$\bs{s}(x)=\bs{s}(\mrdiag(x_{1},x_{2}))=\bs{s}(x_{2})\bs{s}(x_{1})=\bs{s}(x_{1})\bs{s}(x_{2})(\mrdet(x_{1}),\mrdet(x_{2}))_{n}^{-1}.$$ Thus by direct calculation 
			\begin{align*}
				x^{*}&=\bs{s}(\mrdiag(x_{1},x_{2}))^{n}u(x)=\bs{s}(x_{1})^{n}\bs{s}(x_{2})^{n}(\mrdet(x_{1}),\mrdet(x_{2}))_{n}^{-n(n+1)/2}u(x)\\
				&=x_{1}^{*}x_{2}^{*}(\mrdet(x_{1}),\mrdet(x_{2}))_{n}^{-n(n+1)/2}u(x)/u(x_{1})u(x_{2})=x_{1}^{*}x_{2}^{*}.
			\end{align*}
			where for the last step we use the formula 
			$$u(x)/u(x_{1})u(x_{2})=\begin{cases}
				1 \quad &\text{if}\ n\ \text{is odd};\\ 	(\mrdet(x_{1}),\mrdet(x_{2}))_{2} \quad& \text{if}\ n\ \text{is even}.
			\end{cases}$$
			given in \cite{flicker1986metaplectic}*{Section 4}.
			
		\end{proof}
		
		Using the facts above, we have
		\begin{align*}
			&\Delta^{G_{\gamma}}(x^{n})\cdot\theta_{\wt{\pi}}(x^{*})\\
			&=\frac{\prod_{i=1}^{k}\dF{d_{r_{i}}}}{\dF{d_{r}}}\prod_{i=1}^{k}\Delta^{G_{\gamma_{i}}}(x_{i}^{n})\cdot\theta_{\wt{\pi}_{i}}(x_{i}^{*})\\
			&=\frac{\prod_{i=1}^{k}\dF{d_{r_{i}}}}{\dF{d_{r}}}\prod_{i=1}^{k}\frac{1}{\dF{d_{r_{i}}}\abs{n}^{r_{i}/2}}\sum_{\{t_{i}\in G_{\gamma_{i}}\mid t_{i}^{n}=x_{i}^{n}\}}\Delta^{G_{\gamma_{i}}}(t_{i})\cdot\theta_{\pi_{i}}(t_{i})\cdot\epsilon(x_{i}^{*}/t_{i}^{*})\\
			&=\frac{1}{\dF{d_{r}}\abs{n}^{r/2}}\sum_{\{t\in G_{\gamma}\mid t^{n}=x^{n}\}}\Delta^{G_{\gamma}}(t)\cdot\theta_{\pi}(t)\cdot\epsilon(x^{*}/t^{*}).
		\end{align*}

	\end{proof}
	
	Using the above proposition, we may consider the metaplectic correspondence for essentially square integrable representations of $\wt{G_{\beta}}$ and $G_{\beta}$.
	
	\begin{proposition}\label{propmclift}
		
		For any $\pi\in\mrsqrt^{(n)}_{\omega}(G_{\beta})$, there exists a unique $\wt{\pi}\in\mrsqrt_{\wt{\omega}}(\wt{G_{\beta}})$ which lifts to $\pi$, and conversely any $\wt{\pi}\in\mrsqrt_{\wt{\omega}}(\wt{G_{\beta}})$ lifts to a unique $\pi\in\mrsqrt^{(n)}_{\omega}(G_{\beta})$.
		
	\end{proposition}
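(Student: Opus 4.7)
The strategy is to reduce to the case $k=1$, which is Flicker and Kazhdan's theorem we are assuming, via the decomposition of essentially square integrable representations of $G_\beta$ and $\wt{G_\beta}$ into tensor products (respectively, metaplectic tensor products) across Levi blocks. The bridge between the two is Proposition \ref{propmcMTPcomp}, applied in both directions.

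For existence, write $\pi = \pi_1 \otimes \cdots \otimes \pi_k$ with $\pi_i \in \mrsqrt(G_{r_i})$; the hypothesis $\pi \in \mrsqrt^{(n)}_\omega(G_\beta)$ forces each $\omega_{\pi_i}$ to be trivial on $\mu_n$. The $k=1$ version of the metaplectic correspondence gives a genuine $\wt{\pi}_i \in \mrsqrt(\wt{G_{r_i}})$ lifting $\pi_i$. Before forming the metaplectic tensor product, we must verify that $\wt{\omega}$ is compatible with $(\wt{\pi}_1, \ldots, \wt{\pi}_k)$ in the sense of \eqref{eqomegacompatible}. Writing an arbitrary $\lambda \in F^{\times n}$ as $\mu^n$ and comparing $\bs{s}(\lambda I_{r_i}) = \bs{s}((\mu I_{r_i})^n)$ with $(\mu I_{r_i})^{*} = u(\mu I_{r_i}) \bs{s}(\mu I_{r_i})^n$ via \eqref{eqBLS} (and the analogous identity in $\wt{G_r}$), both sides of \eqref{eqomegacompatible} translate into values of $\omega_{\pi_i}$ and $\omega$; the standard relation $\omega(\mu I_r) = \prod_i \omega_{\pi_i}(\mu I_{r_i})$ combined with the $u$-function identity recalled in the lemma above then yields the compatibility. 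Once this is checked, set $\wt{\pi} := (\wt{\pi}_1 \wt{\otimes} \cdots \wt{\otimes} \wt{\pi}_k)_{\wt{\omega}}$, which lies in $\mrsqrt_{\wt{\omega}}(\wt{G_\beta})$ by the remark following Theorem \ref{thmMTP} and lifts $\pi$ by Proposition \ref{propmcMTPcomp}.

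For the uniqueness of $\wt{\pi}$, suppose $\wt{\pi}, \wt{\pi}' \in \mrsqrt_{\wt{\omega}}(\wt{G_\beta})$ both lift $\pi$. Evaluating \eqref{eqmccorr} at $x \in G_\beta$ with $x^n$ elliptic semi-simple regular, the simplification in Remark \ref{remTrpisupp} applied to the right-hand side shows that $\theta_{\wt{\pi}}(x^{*})$ and $\theta_{\wt{\pi}'}(x^{*})$ are both determined by an explicit multiple of $\Delta^{G_\beta}(x)\theta_\pi(x)$, hence agree. Combined with Remark \ref{remTrpitildesupp}, which confines the support of $\theta_{\wt{\pi}}$ to $Z(\wt{G_r})\bigcup_T \wt{T^{(n)}}$, this forces the two characters to coincide on the elliptic locus of $\wt{G_\beta}$. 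Since an essentially square integrable representation is determined by its character on the elliptic locus, we conclude $\wt{\pi} = \wt{\pi}'$.

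The converse direction is symmetric: decompose $\wt{\pi} = (\wt{\pi}_1 \wt{\otimes} \cdots \wt{\otimes} \wt{\pi}_k)_{\wt{\omega}}$ using Theorem \ref{thmMTP}.(4), apply the $k=1$ correspondence in reverse to obtain each $\pi_i$ with central character trivial on $\mu_n$, and form $\pi := \pi_1 \otimes \cdots \otimes \pi_k \in \mrsqrt^{(n)}_\omega(G_\beta)$; this lifts $\wt{\pi}$ by Proposition \ref{propmcMTPcomp}, and its uniqueness follows directly from Remark \ref{remTrpisupp}. The main technical obstacle throughout is the verification of \eqref{eqomegacompatible}, which requires careful bookkeeping of the discrepancy between $\bs{s}(\lambda I_{r_i})$ and $\bs{s}(\mu I_{r_i})^n$ via the 2-cocycle \eqref{eqBLS} and the $u$-function; everything else is a formal consequence of the assumed $k=1$ theorem and the results of section 2.
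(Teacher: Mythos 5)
Your proposal follows essentially the same route as the paper: decompose $\pi$ and $\wt{\pi}$ into (metaplectic) tensor products over the blocks of $\beta$, invoke the $k=1$ case of Flicker--Kazhdan, transfer the lifting property in both directions via Proposition \ref{propmcMTPcomp}, and deduce uniqueness from the linear independence of Harish-Chandra characters as in Remarks \ref{remTrpitildesupp} and \ref{remTrpisupp}. The only difference is that you spell out the verification of the compatibility condition \eqref{eqomegacompatible}, which the paper leaves implicit; this is a worthwhile detail but does not change the argument.
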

	
	\begin{proof}
		
		The $k=1$ case is studied by Flicker-Kazhdan based on the simple trace formula (\cite{flicker1986metaplectic}*{ Theorem 26.1}), which will be our starting point. Now we consider the general case. 
		
		For $\pi\in\mrsqrt^{(n)}_{\omega}(G_{\beta})$ 
		, there exists a unique $\pi_{i}\in\mrsqrt(G_{r_{i}})$ 
		for each $i$ such that $\pi=\pi_{1}\otimes\dots\otimes\pi_{k}$. Similarly for $\wt{\pi}\in\mrsqrt_{\wt{\omega}}(\wt{G_{\beta}})$, there exists  $\wt{\pi_{i}}\in\mrsqrt_{\epsilon}(\wt{G_{r_{i}}})$ for each $i$
		such that $\wt{\pi}=(\wt{\pi}_{1}\otimes\dots\otimes\wt{\pi}_{k})_{\wt{\omega}}$, where $\wt{\omega}$ is a character of $Z(\wt{G_{r}})$ satisfying \eqref{eqomegacompatible}. 
		
		So if $\pi=\pi_{1}\otimes\dots\otimes\pi_{k}\in\mrsqrt^{(n)}_{\omega}(G_{\beta})$ is given, then for each $i$ we choose $\wt{\pi}_{i}\in\mrsqrt_{\epsilon}(\wt{G_{r_{i}}})$ that lifts to $\pi_{i}$. Using Proposition \ref{propmcMTPcomp} the metaplectic tensor product $\wt{\pi}=(\wt{\pi}_{1}\wt{\otimes}\dots\wt{\otimes}\wt{\pi}_{k})_{\wt{\omega}}$ lifts to $\pi$. Conversely for any given $\wt{\pi}=(\wt{\pi}_{1}\wt{\otimes}\dots\wt{\otimes}\wt{\pi}_{k})_{\wt{\omega}}$, we let $\pi_{i}$ be the lift of $\wt{\pi}_{i}$. Still by Proposition \ref{propmcMTPcomp} $\pi=\pi_{1}\otimes\dots\otimes\pi_{k}$ is a lift of $\wt{\pi}$. Finally the uniqueness follows from the formula (\ref{eqmccorr}) and the linear independence of Harish-Chandra characters, see Remark \ref{remTrpitildesupp} and Remark \ref{remTrpisupp}.
		
	\end{proof}
	
	\begin{definition}
		
		By the above proposition, we define the \emph{metaplectic correspondence}
		$$\mrmc:\mrsqrt_{\wt{\omega}}(\wt{G_{\beta}})\ 
		\longrightarrow \mrsqrt_{\omega}(G_{\beta})$$\ 
		as an injection mapping each representation to its lift, whose image is $ \mrsqrt^{(n)}_{\omega}(G_{\beta})$. 
		
	\end{definition}
	
	\begin{remark}
		
		We note that the generalization of metaplectic correspondence to Levi subgroups could have been done in \cite{flicker1986metaplectic}*{\S 26.2}. However there exists a mistake in their study of metaplectic tensor product during the construction, as already been pointed out by other authors. In \cite{mezo2002comparisons} and \cite{mezo2004metaplectic}, Mezo somehow managed to fix the mistake of Flicker and Kazhdan, however at least for general $r$ and $n$ he didn't write down all the details, especially it is unclear to the author why the scalar (i.e. $1/(\dF{d_{r}}\abs{n}^{r/2})$) in the formula \eqref{eqmccorr} of Harish-Chandra characters should be the correct one. Thus, our contribution here is to use the refined ``metaplectic tensor product" studied in \cite{kaplan2022classification} to fix the mistake above, which in particular gives the correct scalar in \eqref{eqmccorr}.
		
	\end{remark}
	
	\subsection{The case of a maximal split torus}
	
	We digress for a moment to study an important example. Hopefully it provides a better understanding of \eqref{eqmccorr} and Proposition \ref{propmclift}. This subsection is independent, so the readers are free to skip it.
	
	In this subsection we assume $\beta=(1,1,\dots,1)$, or in other words, $G_{\beta}$ is the diagonal torus of $G_{r}$. We notice that in this special case we have $Z(G_{\beta})=G_{\beta}$ and $G_{\beta,\beta}^{(n)}=F^{\times n}\times\dots\times F^{\times n}$. In particular, both $G_{\beta}$ and $\wt{G_{\beta,\beta}^{(n)}}$ are abelian.
	
	By definition every irreducible representation $\chi$ of $G_{\beta}$ is indeed a character. 
	Correspondingly, let $\wt{\chi}$ be an irreducible genuine representation of $\wt{G_{\beta}}$. By Proposition \ref{thmMTP}.(2), we have $\wt{\chi}^{(n)}:=\wt{\chi}\rest_{\wt{G_{\beta,\beta}^{(n)}}}=n_{\beta}\cdot \wt{\tau}$, where $\wt{\tau}$ is a genuine character of $\wt{G_{\beta,\beta}^{(n)}}$. 
	
	\begin{proposition}
		
		Let $\chi\in\mrirr(G_{\beta})$ and let $\wt{\chi}\in\mrirr_{\epsilon}(\wt{G_{\beta}})$. Then $\wt{\chi}$ lifts to $\chi$ if and only if \begin{enumerate}
			\item $\chi$ is trivial on $\prod_{i=1}^{r}\mu_{n}\subset G_{\beta}$ (via the diagonal embedding).
			\item  $\wt{\tau}$ is the genuine character satisfying $\wt{\tau}(x^{*})=\chi(x)$ for $x\in G_{\beta}$.
		\end{enumerate} 
		
	\end{proposition}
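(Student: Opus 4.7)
The plan is to specialize the defining formula \eqref{eqmccorr} to $\beta=(1,\dots,1)$ and unwind it using the abelian structure of the split torus.

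First I would note that $\Delta^{G_{\beta}}\equiv 1$ here, since every block has rank one and so the discriminant product is empty. Next, $\wt{G_{\beta,\beta}^{(n)}}$ lies in the center of $\wt{G_{\beta}}$: by \eqref{eqBLS}, the cocycle $\sigma^{(c)}$ evaluated with one argument in $G_{\beta,\beta}^{(n)}$ is a product of Hilbert symbols one of whose factors lies in $F^{\times n}$, hence is trivial. Since $x^{*}\in\wt{G_{\beta,\beta}^{(n)}}$ is then central and $\wt{\chi}$ is irreducible, $x^{*}$ acts by the scalar $\wt{\tau}(x^{*})$; combined with $\dim(\wt{\chi})=n_{\beta}$ (from Theorem \ref{thmMTP}.(2), noting $\wt{G_{\gamma}}=\wt{Z(G_{\beta})}\wt{G_{\gamma,\beta}^{(n)}}$ when $\gamma=\beta$), this gives $\theta_{\wt{\chi}}(x^{*})=n_{\beta}\wt{\tau}(x^{*})$. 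By Proposition \ref{propmulcal}.(1) with each $d_{r_{i}}'=\mrgcd(n,1)=1$, one computes $n_{\beta}\cdot\dF{d_{r}}\cdot\abs{n}^{r/2}=\dF{n^{r}}\cdot\abs{n}^{r/2}=n^{r}$. Thus \eqref{eqmccorr} reduces, for $x\in G_{\beta}$ with $x^{n}$ regular, to
$$n^{r}\,\wt{\tau}(x^{*})=\sum_{\zeta\in\mu_{n}^{r}}\chi(x\zeta)\,\epsilon\bigl(x^{*}/(x\zeta)^{*}\bigr).$$

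The key structural fact to invoke next is that the Flicker--Kazhdan star map depends only on $x^{n}$: the correction $u$ from \cite{flicker1986metaplectic}*{\S 4} is chosen precisely so that $(x\zeta)^{*}=x^{*}$ for every $\zeta\in\mu_{n}^{r}$, which is also tacitly used in Remark \ref{remTrpisupp}. Granting this, every $\epsilon$-factor above collapses to $1$, and the identity becomes
$$n^{r}\,\wt{\tau}(x^{*})=\sum_{\zeta\in\mu_{n}^{r}}\chi(x\zeta)=\chi(x)\sum_{\zeta\in\mu_{n}^{r}}\chi(\zeta).$$

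Both directions then fall out of character orthogonality on the finite abelian group $\mu_{n}^{r}$. For ($\Leftarrow$), conditions (1) and (2) give $\sum_{\zeta}\chi(\zeta)=n^{r}$ and $\wt{\tau}(x^{*})=\chi(x)$, making the identity trivially true. For ($\Rightarrow$), the right-hand side equals $n^{r}\chi(x)$ when $\chi\rest_{\mu_{n}^{r}}=1$ and $0$ otherwise; since $\wt{\tau}(x^{*})\neq 0$, we are forced into the former case, proving (1), after which the identity reduces to $\wt{\tau}(x^{*})=\chi(x)$ on the dense regular locus, hence everywhere on $G_{\beta}$ by density of this locus and smoothness of $\chi$ and $x\mapsto\wt{\tau}(x^{*})$.

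The main obstacle is justifying $(x\zeta)^{*}=x^{*}$ for $\zeta\in\mu_{n}^{r}$; this requires careful analysis of the definition of $u$ in \cite{flicker1986metaplectic}*{\S 4}, especially for $n$ even, balancing $u(x\zeta)/u(x)$ against the commutator discrepancy $(\bs{s}(x)\bs{s}(\zeta))^{n}/\bs{s}(x)^{n}\bs{s}(\zeta)^{n}$ computed via \eqref{eqBLS}, in the same spirit as the short lemma embedded in the proof of Proposition \ref{propmcMTPcomp}.
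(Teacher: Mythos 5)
Your proposal is correct and follows essentially the same route as the paper: specialize \eqref{eqmccorr} to the diagonal torus, identify the left side as $n_{\beta}\wt{\tau}(x^{*})$ via Theorem \ref{thmMTP}.(2), reduce the right side to the character sum $\sum_{\zeta\in\mu_{n}^{r}}\chi(x\zeta)$, and conclude by orthogonality together with the multiplicity computation $n_{\beta}=\dF{n^{r}}/\dF{d_{r}}$ from Proposition \ref{propmulcal}. The collapse $\epsilon(x^{*}/(x\zeta)^{*})=1$ that you flag as the delicate point is used silently in the paper's proof (and asserted in Remark \ref{remTrpisupp} and Lemma \ref{lemmaMCnulllift} for block-scalar $\zeta$), so your explicit attention to it is a reasonable, if slightly more cautious, rendering of the same argument.
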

	
	\begin{proof}
		
		Taking $\wt{\pi}=\wt{\chi}$ and $\pi=\chi$, the left-hand side of \eqref{eqmccorr} becomes 
		$$n_{\beta}\cdot\wt{\tau}(x^{*}).$$
		Taking $\pi=\chi$, the right-hand side of \eqref{eqmccorr} becomes
		\begin{align*}
			\frac{1}{\dF{d_{r}}\abs{n}^{r/2}}\sum_{\{t\in G_{\beta}\mid t^{n}=x^{n}\}}\chi(t)\cdot\epsilon(x^{*}/t^{*})=\frac{1}{\dF{d_{r}}\abs{n}^{r/2}}\sum_{\zeta\in\prod_{i=1}^{r}\mu_{n}}\chi(x\zeta).
		\end{align*}
		So it equals $0$ unless $\chi$ is trivial on $\prod_{i=1}^{r}\mu_{n}$, and in this case it equals 
		$$\frac{\dF{n^{r}}}{\dF{d_{r}}}\cdot\chi(x).$$
		Finally using Proposition \ref{propmulcal}.(2) we have $n_{\beta}=\dF{n^{r}}/\dF{d_{r}}$, which finishes the proof.
		
	\end{proof}
	
	Thus for $\chi\in\mrirr(G_{\beta})$ trivial on $\prod_{i=1}^{r}\mu_{n}$ and $\wt{\chi}\in\mrirr_{\epsilon}(\wt{G_{\beta}})$, we have $\wt{\chi}$ lifts to $\chi$ if and only if $\wt{\chi}^{(n)}$ contains $\wt{\tau}$ as in the proposition. If we also fix $\wt{\omega}$ as the central character of $\wt{\chi}$ restricted to $Z(\wt{G_{r}})$, then $\wt{\chi}$ itself is uniquely determined by $\wt{\tau}$ and $\wt{\omega}$ (\emph{cf.} Theorem \ref{thmMTP}). This justifies Proposition \ref{propmclift} in this special case.
	
	\subsection{Compatibility with parabolic induction and Jacquet module}\label{subsectioncomppindjac}

	As before we let $\beta$, $\gamma$ be compositions of $r$, such that $\gamma$ is a refinement of $\beta$. Let $P_{\beta,\gamma}$ be a parabolic subgroup of $G_{\beta}$ having the Levi subgroup $G_{\gamma}$, and let $N_{\beta,\gamma}$ be the corresponding unipotent radical. By \cite{moeglin1995spectral}*{Appendix I} there exists a canonical splitting $\bs{s}_{N_{\beta,\gamma}}:N_{\beta,\gamma}\rightarrow \wt{G_{\beta}}$ as a group homomorphism that is $P_{\beta,\gamma}$-equivariant. Using this splitting we also realize $N_{\beta,\gamma}$ as a subgroup of $\wt{G_{\beta}}$. We consider the following normalized parabolic induction and normalized Jacquet module with respect to the decomposition $P_{\beta,\gamma}=G_{\gamma}N_{\beta,\gamma}$ and $\wt{P_{\beta,\gamma}}=\wt{G_{\gamma}}N_{\beta,\gamma}$:
	$$i_{\gamma,\beta}:=i_{N_{\beta,\gamma},1}:\mrrep(G_{\gamma})\rightarrow\mrrep(G_{\beta})\quad(\text{or}\ \mrrep(\wt{G_{\gamma}})\rightarrow\mrrep(\wt{G_{\beta}})) $$
	$$r_{\beta,\gamma}:=r_{N_{\beta,\gamma},1}:\mrrep(G_{\beta})\rightarrow\mrrep(G_{\gamma})\quad(\text{or}\ \mrrep(\wt{G_{\beta}})\rightarrow\mrrep(\wt{G_{\gamma}}))$$
	in the sense of \cite{bernstein1977induced}*{\S 2.3}.
	
	The metaplectic correspondence is compatible with the parabolic induction and Jacquet functor in the following sense.
	
	\begin{proposition}\label{propmcindres}
		
		\begin{enumerate}
			\item For $\rho\in\mrirr(G_{\gamma})$ as a lift of $\wt{\rho}\in\mrirr_{\epsilon}(\wt{G_{\gamma}})$, 
			we have that $i_{\gamma,\beta}(\wt{\rho})$ and $i_{\gamma,\beta}(\rho)$ satisfy the equation \eqref{eqmccorr}. In other words, $i_{\gamma,\beta}(\wt{\rho})$ lifts to $i_{\gamma,\beta}(\rho)$ in the general sense (without sticking to irreducible representations).
			
			\item Let $\wt{\pi}\in\mrsqrt_{\wt{\omega}}(\wt{G_{\beta}})$ and let $\pi\in\mrsqrt_{\omega}^{(n)}(G_{\beta})$ be a lift of $\wt{\pi}$. If $r_{\beta,\gamma}(\pi)\in\mrsqrt^{(n)}_{\omega}(G_{\gamma})$, then $r_{\beta,\gamma}(\wt{\pi})\in\mrsqrt(\wt{G_{\gamma}})$ (thus in particular it is irreducible) which lifts to $r_{\beta,\gamma}(\pi)$; otherwise $r_{\beta,\gamma}(\wt{\pi})=0$.
		\end{enumerate}
		
	\end{proposition}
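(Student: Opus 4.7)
My plan is to reduce (1) to van Dijk's character formula for parabolic induction and (2) to Casselman's asymptotic formula for Jacquet modules; both pass to the cover unchanged thanks to the canonical splitting $\bs{s}_{N_{\beta,\gamma}}$ of $N_{\beta,\gamma}$.

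For (1), I substitute van Dijk's formula into both sides of \eqref{eqmccorr} applied to $i_{\gamma,\beta}(\wt\rho)$ and $i_{\gamma,\beta}(\rho)$. In its normalized form, van Dijk reads: for regular semisimple $g\in G_\beta$,
$$\Delta^{G_\beta}(g)\theta_{i_{\gamma,\beta}(\rho)}(g)=\sum_{[y]}\Delta^{G_\gamma}(y)\theta_\rho(y),$$
the sum running over $G_\gamma$-conjugacy classes in $G_\gamma\cap O_g$ where $O_g$ is the $G_\beta$-orbit of $g$; similarly on the cover using $\wt y=\wt h\wt g\wt h^{-1}$ for any lift $\wt h$ of a conjugator (well-defined in $\wt{G_\gamma}$ as $\mu_n$ is central). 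Plugging in and swapping the order of summation, the identity reduces to matching two indexings of pairs $(y,s)$ with $y\in G_\gamma$ a $G_\beta$-conjugate of $x$ and $s\in G_\gamma$ an $n$-th root of $y^n$. The key geometric fact is that $y^n$ being regular semisimple in $G_\beta$ forces $Z_{G_\beta}(y^n)=Z_{G_\gamma}(y^n)\subseteq G_\gamma$ (by a block-diagonal argument using that distinct eigenvalues across sub-blocks force the centralizer to respect the finer block decomposition), so every $n$-th root of such $y^n$ in $G_\beta$ already lies in $G_\gamma$; hence the two double sums parametrize the same set, and the scalars $\epsilon(x^*/s^*)$ match by the $G_\beta$-conjugation-equivariance of the star map.

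For (2), I first derive the character identity analogous to \eqref{eqmccorr} for the Jacquet modules. Casselman's asymptotic formula gives, for $y\in G_\gamma$ such that $y^n$ is regular semisimple in $G_\beta$ and lies deep in the negative chamber of $P_{\beta,\gamma}$ (i.e.\ $\abs{\alpha(y^n)}<1$ for all roots $\alpha$ of $N_{\beta,\gamma}$), the identities $\theta_\pi(y^n)=\theta_{r_{\beta,\gamma}(\pi)}(y^n)$ and $\theta_{\wt\pi}(y^*)=\theta_{r_{\beta,\gamma}(\wt\pi)}(y^*)$, and similarly at every $t\in G_\beta$ with $t^n=y^n$ (all of which lie in $G_\gamma$ and inherit the negative-chamber condition by the geometric fact above). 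The ratio $\Delta^{G_\beta}(t)/\Delta^{G_\gamma}(t)$ depends only on the absolute values $\abs{\alpha(y^n)}^{1/n}$; an ultrametric computation shows that the root-of-unity perturbations $t=y\zeta$ do not alter these absolute values, since in the deep negative chamber one magnitude strictly dominates. Hence this ratio factors out and cancels between the two sides of \eqref{eqmccorr} applied to $\wt\pi/\pi$, yielding \eqref{eqmccorr} for $r_{\beta,\gamma}(\wt\pi)/r_{\beta,\gamma}(\pi)$ with $G_\gamma$-discriminants.

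To conclude, I invoke the fact that for $\pi\in\mrsqrt(G_\beta)$ the Jacquet module $r_{\beta,\gamma}(\pi)$ is either in $\mrsqrt(G_\gamma)$ or zero (a block-wise consequence of Bernstein-Zelevinsky's structural theorem for Jacquet modules of generalized Steinberg representations, which transfers to the cover via the metaplectic tensor product description of Theorem \ref{thmMTP}). If $r_{\beta,\gamma}(\pi)\in\mrsqrt^{(n)}_\omega(G_\gamma)$, Proposition \ref{propmclift} produces a unique $\wt\sigma\in\mrsqrt(\wt{G_\gamma})$ lifting it, and the derived character identity together with Remark \ref{remTrpisupp} (characters of essentially square integrable representations are determined by the elliptic locus) forces $r_{\beta,\gamma}(\wt\pi)=\wt\sigma$. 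Otherwise either $r_{\beta,\gamma}(\pi)=0$ or it is essentially square integrable with non-trivial central character on $\prod_{i,j}\mu_n$; in both sub-cases the RHS of the Jacquet-module character identity vanishes on the elliptic regular locus of $G_\gamma$, which by Remark \ref{remTrpitildesupp} forces $r_{\beta,\gamma}(\wt\pi)=0$. I expect the main obstacle to be the ultrametric verification that $\Delta^{G_\beta}/\Delta^{G_\gamma}$ is constant on the $n$-th root fiber, and secondarily the ``wrong central character'' sub-case in the vanishing statement; the remaining steps are bookkeeping built on \cite{flicker1986metaplectic} and \cite{kaplan2022classification}.
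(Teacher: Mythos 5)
Your treatment of part (1) matches the paper's: van Dijk's formula on both sides, the equivariance $\bs{s}(g)^{-1}x^{*}\bs{s}(g)=(g^{-1}xg)^{*}$, and the observation that the $n$-th root fiber of a regular semisimple element of $G_{\gamma}$ stays inside $G_{\gamma}$; that part is fine. In part (2), however, your normalization of Casselman's formula is off in a way that undermines the claimed cancellation. The correct statement \eqref{eqHCcharJac} carries the factor $\Delta^{G_{\beta}}(t^{n})/\Delta^{G_{\gamma}}(t^{n})$ on the covering side (because $\bs{p}(t^{*})=t^{n}$) and $\Delta^{G_{\beta}}(t)/\Delta^{G_{\gamma}}(t)$ on the linear side; with these in place the passage from \eqref{eqmccorr} for $(\wt{\pi},\pi)$ to \eqref{eqmccorr} for the Jacquet modules is immediate and no cancellation argument is needed. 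Your unnormalized version $\theta_{\wt{\pi}}(y^{*})=\theta_{r_{\beta,\gamma}(\wt{\pi})}(y^{*})$, $\theta_{\pi}(t)=\theta_{r_{\beta,\gamma}(\pi)}(t)$ followed by "the ratio factors out and cancels" does not close: writing $c_{j}=\abs{\mrdet(t_{j})}^{1/r_{j}'}$, one has $\Delta^{G_{\beta}}(t)/\Delta^{G_{\gamma}}(t)=\prod_{j<j'}(c_{j}/c_{j'})^{r_{j}'r_{j'}'/2}$ while $\Delta^{G_{\beta}}(x^{n})/\Delta^{G_{\gamma}}(x^{n})$ is the $n$-th power of this, so the two sides of \eqref{eqmccorr} acquire different discriminant ratios and do not cancel. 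This is repairable, but the "ultrametric constancy on the fiber" you flag as the main obstacle is not where the difficulty lies.

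The genuine gap is in the concluding step. The character identity you derive for the Jacquet modules holds only at points $t^{*}$ with $t=\mrdiag(t_{1},\dots,t_{l})$, each $t_{j}^{n}$ elliptic, \emph{and} the strict dominance condition \eqref{eqcondtidi+1}; this is a proper conjugation-invariant cone inside the elliptic locus of $\wt{G_{\gamma,(r)}^{(n)}}$. Remark \ref{remTrpisupp} and "characters of square integrable representations are determined by the elliptic locus" do not apply to an identity known only on such a cone: linear independence of characters does not automatically restrict to subsets cut out by inequalities on $\abs{\mrdet(t_{j})}$. The paper devotes Lemma \ref{lemmalinearrelasqrtrep} (proved by induction on the number of blocks, using Lemma \ref{lemmasigmalinearindep} to choose elliptic elements of arbitrarily small determinant so that dominance can always be arranged) precisely to upgrade a linear relation on the cone to one on the full elliptic locus, and then compares restrictions to $\wt{G_{\gamma,(r)}^{(n)}}$ with the multiplicity count of Theorem \ref{thmMTP}.(2) and Corollary \ref{corpirest} to pin down the length of $r_{\beta,\gamma}(\wt{\pi})$. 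Your proposal has no substitute for this lemma, and the same omission infects the vanishing branch (where $\theta_{r_{\beta,\gamma}(\wt{\pi})}(t^{*})=0$ is again only known on the cone). Relatedly, the assertion that "$r_{\beta,\gamma}$ of a square integrable representation is square integrable or zero transfers to the cover via Theorem \ref{thmMTP}" is not available: Theorem \ref{thmMTP} concerns metaplectic tensor products of irreducible representations, not Jacquet modules, and the irreducibility of $r_{\beta,\gamma}(\wt{\pi})$ is part of what must be proved, not an input.
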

	
	\begin{proof}
		
		The statement (1) has already been used in \cite{flicker1986metaplectic}*{Proposition 26.2} with a one-sentence explanation, and here we give a rather detailed proof for completeness. For $x\in G_{\beta}$ such that $x^{n}$ is semi-simple regular, we have
		\begin{align*}
			&\quad \Delta^{G_{\beta}}(x^{n})\cdot\theta_{i_{\beta,\eta}(\wt{\rho})}(x^{*})\\
			&=\sum_{\{\wt{g}\in\wt{G_{\beta}}/\wt{P_{\beta,\gamma}}\mid\wt{g}^{-1}x^{*}\wt{g}\in\wt{G_{\gamma}}\}}\Delta^{G_{\gamma}}(g^{-1}x^{n}g)\cdot\theta_{\wt{\rho}}(\wt{g}^{-1}x^{*}\wt{g})\\
			&=\sum_{\{g\in G_{\beta}/P_{\beta,\gamma}\mid g^{-1}x^{n}g\in G_{\gamma}\}}\Delta^{G_{\gamma}}(g^{-1}x^{n}g)\cdot\theta_{\wt{\rho}}((g^{-1}xg)^{*})\\
			&=\frac{1}{\dF{d_{r}}\abs{n}^{r/2}}\sum_{\{g\in G_{\beta}/P_{\beta,\gamma}\mid g^{-1}x^{n}g\in G_{\gamma}\}}\sum_{\{t\mid t^{n}=x^{n}\}}\Delta^{G_{\gamma}}(g^{-1}tg)\cdot\theta_{\rho}(g^{-1}tg)\cdot\epsilon((g^{-1}xg)^{*}/(g^{-1}tg)^{*})\\
			&=\frac{1}{\dF{d_{r}}\abs{n}^{r/2}}\sum_{\{t\mid t^{n}=x^{n}\}}\sum_{\{g\in G_{\beta}/P_{\beta,\gamma}\mid g^{-1}tg\in G_{\gamma}\}}\Delta^{G_{\gamma}}(g^{-1}tg)\cdot\theta_{\rho}(g^{-1}tg)\cdot\epsilon(x^{*}/t^{*})\\
			&=\frac{1}{\dF{d_{r}}\abs{n}^{r/2}}\sum_{\{t\mid t^{n}=x^{n}\}}\Delta^{G_{\beta}}(t)\cdot\theta_{i_{\gamma,\beta}(\rho)}(t)\cdot\epsilon(x^{*}/t^{*}).
		\end{align*}
		Here we use \cite{van1972computation}*{Theorem 3}, which also works for covering groups with the same argument, for the first and final equations. We use the fact $\bs{s}(g)^{-1}x^{*}\bs{s}(g)=(g^{-1}xg)^{*}$ for the second equation and (\ref{eqmccorr}) for the third equation. Finally $ g^{-1}tg\in G_{\beta}$ is equivalent to  $g^{-1}t^{n}g\in G_{\beta}$, since $t^{n}=x^{n}$ is semi-simple regular. We use this statement and the fact that $(g^{-1}xg)^{*}/(g^{-1}tg)^{*}=\bs{s}(g)^{-1}(x^{*}/t^{*})\bs{s}(g)=x^{*}/t^{*}$ for the fourth equation. 
		
		For the statement (2), we write $\gamma=(r_{1}',\dots,r_{l}')$. We first recall a result of Casselman. 
		Let $t=\mrdiag(t_{1},\dots,t_{l})\in G_{\gamma}$, such that $t_{j}$ is an element in $G_{r_{j}'}$ with $t_{j}^{n}$ being elliptic for each $j$, and moreover
		\item\begin{equation}\label{eqcondtidi+1}
			\abs{\mrdet(t_{j})}^{1/r_{j}'}/\abs{\mrdet(t_{j+1})}^{1/r_{j+1}'}>1\quad \text{for}\ j=1,\dots,l-1.
		\end{equation} 
		Then we have 
		(\emph{cf.} \cite{casselman1977characters}*{Theorem 5.2} and \cite{flicker1986metaplectic}*{Theorem 14}) 
		\begin{equation}\label{eqHCcharJac}
			\Delta^{G_{\beta}}(t^{n})\cdot\theta_{\wt{\pi}}(t^{*})=\Delta^{G_{\gamma}}(t^{n})\cdot\theta_{r_{\beta,\gamma}(\wt{\pi})}(t^{*})\quad\text{and}\quad\Delta^{G_{\beta}}(t)\cdot\theta_{\pi}(t)=\Delta^{G_{\gamma}}(t)\cdot\theta_{r_{\beta,\gamma}(\pi)}(t).
		\end{equation}
		
		To continue, we need the following technical lemma. 
		
		\begin{lemma}\label{lemmalinearrelasqrtrep}
			
			Let $\gamma=(r_{1}',\dots,r_{l}')$ be a composition of $r$ as above and let $\wt{\tau}_{1},\dots,\wt{\tau}_{s'}\in\mrsqrt_{\epsilon}(\wt{G^{(n)}_{\gamma,(r)}})$ with $G^{(n)}_{\gamma,(r)}:=G_{r_{1}'}^{(n)}\times\dots\times G_{r_{l}'}^{(n)}$. If for certain $c_{i}\in\mbc$ the equation
			\begin{equation}\label{eqTrtau'ici}
				\sum_{i=1}^{s'}c_{i}\cdot\theta_{\wt{\tau}_{i}}(t^{*})=0
			\end{equation}
			holds for any $t$ as above, then we have 
			$$\sum_{i=1}^{s'}c_{i}\cdot\theta_{\wt{\tau}_{i}}=0.$$
		\end{lemma}
		
		\begin{proof}
			
			First we need the following basic lemma.
			
			\begin{lemma}\label{lemmasigmalinearindep}
				
				Let $\wt{\sigma}_{1},\dots,\wt{\sigma}_{u}\in\mrsqrt_{\epsilon}(\wt{G_{r'}^{(n)}})$ be pairwise non-isomorphic representations and let $N>0$, then there exist elliptic elements $g_{1},\dots,g_{u}\in G_{r'}^{(n)}$  satisfying $\abs{\mrdet(g_{j})}<1/N$ for each $j=1,\dots, u$, such that the matrix $(\theta_{\wt{\sigma}_{k}}(\zeta_{j}\bs{s}(g_{j})))_{1\leq j,k\leq u}$ is invertible for any $\zeta_{j}\in\mu_{n}$.
				
			\end{lemma}
			
			\begin{proof}
				
				The lemma follows from the linear independence of $\theta_{\wt{\sigma}_{k}}$ on elliptic locus if we don't impose any restriction on $\abs{\mrdet(g_{j})}$. In general, we just need to replace $g_{j}$ by $g_{j}\varpi_{F}^{m}$ for $m$ large enough.
				
			\end{proof}
			
			We remark that each $\wt{\tau}_{i}$, as an essentially square integrable representation, is determined by the restriction of its Harish-Chandra character to the elliptic locus. We write $\wt{\tau}_{i}=\wt{\tau}_{i1}\wt{\otimes}\dots\wt{\otimes}\wt{\tau}_{il}$, where $\wt{\tau}_{ij}\in\mrsqrt_{\epsilon}(\wt{G_{r_{j}'}^{(n)}})$ for $j=1,\dots,l$. 
			
			We prove Lemma \ref{lemmalinearrelasqrtrep} by induction on $l$. The $l=1$ case is clear. Assume the lemma holds for $l-1$. We consider the division $\{1,\dots,s'\}=I_{1}\sqcup\dots\sqcup I_{u}$, such that $\wt{\tau}_{i_{1}l}\simeq\wt{\tau}_{i_{2}l}$ if and only if $i_{1},i_{2}\in I_{k}$ for some $k=1,\dots,u$. And we denote by $\wt{\tau}_{I_{k}l}$ the isomorphism class of $\wt{\tau}_{il}$ for $i\in I_{k}$. We write $\wt{\tau}_{i}'=\wt{\tau}_{i1}\wt{\otimes}\dots\wt{\otimes}\wt{\tau}_{il-1}$ and $\wt{\tau}_{i}=\wt{\tau}_{i}'\wt{\otimes}\wt{\tau}_{il}$ for each $i=1,\dots, s'$. For $t=\mrdiag(t_{1},\dots,t_{l})$ as above, we write $t'=\mrdiag(t_{1},\dots,t_{l-1})$. By \eqref{eqTrtau'ici} we have
			\begin{equation}\label{eqTrtau'icil}
				\sum_{k=1}^{u}\theta_{\wt{\tau}_{I_{k}l}}(t_{l}^{*})\sum_{i\in I_{k}}c_{i}\cdot\theta_{\wt{\tau}'_{i}}(t'^{*})=0.
			\end{equation}
			Using Lemma \ref{lemmasigmalinearindep} for $\wt{\sigma}_{k}=\wt{\tau}_{I_{k}l}$, we may choose $u$'s different $t_{l}$ with the corresponding $t_{l}^{*}$ denoted by $\wt{g}_{1},\dots,\wt{g}_{u}$, such that the matrix $(\theta_{\wt{\tau}_{I_{k}l}}(\wt{g}_{j}))_{1\leq j,k\leq u}$ is invertible. Plugging $t_{l}^{*}=\wt{g}_{j}$ into \eqref{eqTrtau'icil} for $j=1,\cdots, u$ we get
			$$\sum_{i\in I_{k}}c_{i}\cdot\theta_{\wt{\tau}'_{i}}(t'^{*})=0,\quad k=1,\dots,u.$$
			Using the induction hypothesis and the definition of $I_{k}$, we have
			
			$$\sum_{i\in I_{k}}c_{i}\cdot\theta_{\wt{\tau}_{i}'}=0\quad\text{and}\quad\sum_{i\in I_{k}}c_{i}\cdot\theta_{\wt{\tau}_{i}}=(\sum_{i\in I_{k}}c_{i}\cdot \theta_{\wt{\tau}_{i}'})\wt{\otimes} \theta_{\wt{\tau}_{I_{k}l}}=0,\quad k=1,\dots,u.$$
			Summing over $k$ we finish the proof.
			
		\end{proof}

		We finish the proof of Proposition \ref{propmcindres}.(2). We consider $\pi$, $\wt{\pi}$ as in the statement. 
		
		If $r_{\beta,\gamma}(\pi)\in\mrsqrt^{(n)}_{\omega}(G_{\gamma})$, then we let $\wt{\pi}'$ be the unique essentially square integrable representation of $\wt{G_{\gamma}}$ satisfying $\mrmc(\wt{\pi}')=r_{\beta,\gamma}(\pi)$ and having the central character $\wt{\omega}$ restricted to $Z(\wt{G_{r}})$. Then using \eqref{eqmccorr} and \eqref{eqHCcharJac} with a similar argument to that in (1), for $t$ as above we have
		\begin{equation}\label{eqTrpi'pii'}
			\theta_{\wt{\pi}'}(t^{*})=\theta_{r_{\beta,\gamma}(\wt{\pi})}(t^{*})=\sum_{i=1}^{s}\theta_{\wt{\pi}_{i}'}(t^{*}),
		\end{equation}
		where $\wt{\pi}_{1}'$,\dots,$\wt{\pi}_{s}'$ are all the irreducible subquotients of $r_{\beta,\gamma}(\wt{\pi}$). Considering the restriction of each $\wt{\pi}_{i}'$ and $\wt{\pi}'$ to $\wt{G_{\gamma,(r)}^{(n)}}$, using Theorem \ref{thmMTP}.(2) and Lemma \ref{lemmalinearrelasqrtrep}, and comparing the number of irreducible components (notice that the total multiplicities of each $\wt{\pi}_{i}'\rest_{\wt{G_{\gamma,(r)}^{(n)}}}$ and $\wt{\pi}'\rest_{\wt{G_{\gamma,(r)}^{(n)}}}$are the same), we must have $s=1$ and $\wt{\pi}'\rest_{\wt{G_{\gamma,(r)}^{(n)}}}\simeq \wt{\pi}_{1}'\rest_{\wt{G_{\gamma,(r)}^{(n)}}}$. Comparing the central character we also have $\omega_{\wt{\pi}'}\rest_{Z(\wt{G_{r}})}=\omega_{\wt{\pi}_{1}'}\rest_{Z(\wt{G_{r}})}$. By Remark \ref{remTrpitildesupp} or Theorem \ref{thmMTP} we have $\wt{\pi}'\simeq \wt{\pi}_{1}'=r_{\beta,\gamma}(\wt{\pi})$, so $\mrmc(r_{\beta,\gamma}(\wt{\pi}))=r_{\beta,\gamma}(\pi)$.
		
		
		Finally we consider the case $r_{\beta,\gamma}(\pi)\notin\mrsqrt_{\omega}^{(n)}(G_{\gamma})$. 
		
		\begin{lemma}\label{lemmaMCnulllift}
			
			For $\pi'=\pi_{1}'\otimes\dots\otimes\pi_{l}'\in\mrsqrt_{\omega}(G_{\gamma})-\mrsqrt^{(n)}_{\omega}(G_{\gamma})$ and $x=\mrdiag(x_{1},\dots,x_{l})\in G_{\gamma}$ such that $x_{i}^{n}$ is elliptic for any $i=1,\dots,l$, we have 
			$$\prod_{i=1}^{l}\sum_{\{t_{i}\mid t_{i}^{n}=x_{i}^{n}\}}\Delta^{G_{r_{i}'}}(t_{i})\cdot\theta_{\pi_{i}'}(t_{i})\cdot\epsilon(x_{i}^{*}/t_{i}^{*})=0.$$ 
			As a result $\wt{\pi}'=0$ may be regard as the ``null representation" that lifts to $\pi'$. 
			
		\end{lemma}
		
		\begin{proof}
			
			Since $x_{i}$ is elliptic for $i=1,\dots,l$ for each $t_{i}$ in the sum there exists $\zeta_{i}\in\mu_{n}$ such that $x_{i}=t_{i}\zeta_{i}$. Thus $D^{G_{r_{i}'}}(x_{i})=D^{G_{r_{i}'}}(t_{i})$ and $\epsilon(x_{i}^{*}/t_{i}^{*})=1$, and we only need to prove that
			$$\prod_{i=1}^{l}\bigg(\sum_{\zeta_{i}\in\mu_{n}}\omega_{\pi_{i}'}(\zeta_{i})\bigg)\cdot\theta_{\pi_{i}'}(x_{i})=0.$$
			The lemma follows from the fact that $\omega_{\pi'_{i}}\rest_{\mu_{n}}$ is not trivial for some $i$.	
			
		\end{proof}
		
		Using this lemma, \eqref{eqmccorr} and \eqref{eqHCcharJac} and a similar argument to the previous case, we have \begin{equation}
			\theta_{r_{\beta,\gamma}(\wt{\pi})}(t^{*})=\sum_{i=1}^{s}\theta_{\wt{\pi}_{i}'}(t^{*})=0,
		\end{equation}
		where $\wt{\pi}_{1}'$,\dots,$\wt{\pi}_{s}'$ are all the irreducible subquotients of $r_{\beta,\gamma}(\wt{\pi}$) and $t$ is considered as above. Still considering the restriction of each $\wt{\pi}_{i}'$ to $\wt{G_{\gamma,(r)}^{(n)}}$, and using Theorem \ref{thmMTP}.(2) and Lemma \ref{lemmalinearrelasqrtrep}, we have $s=0$ and $r_{\beta,\gamma}(\wt{\pi})=0$.
		
	\end{proof}
	
	\subsection{Classification of essentially square integrable representations}
	
	In this part we classify $\mrsqrt_{\epsilon}(\wt{G_{r}})$ as well as its image under the metaplectic correspondence.
	We let $\nu=\abs{\mrdet(\cdot)}$ be a character of $G_{r'}$ for each positive integer $r'$. For a composition  $\beta=(r_{1},\cdots,r_{k})$ of $r$, representations $\wt{\pi}_{i}\in\mrirr_{\epsilon}(\wt{G_{r_{i}}})$ for $i=1,\dots, k$ and a genuine character $\wt{\omega}$ of $Z(\wt{G_{r}})$ satisfying \eqref{eqomegacompatible}, we define the following Bernstein-Zelevinsky product 
	$$(\wt{\pi}_{1}\wt{\times}\dots\wt{\times}\wt{\pi}_{k})_{\wt{\omega}}:=i_{\beta,(r)}((\wt{\pi}_{1}\wt{\otimes}\dots\wt{\otimes}\wt{\pi}_{k})_{\wt{\omega}}).$$ 
	First we focus on a result which was previously studied in \cite{flicker1986metaplectic}*{Lemma 27.1}. But unfortunately both the statement and proof in \emph{loc. cit.} are flawed. So we give a correct one.
	
	\begin{proposition}\label{propvaluesrho}
		
		\begin{enumerate}
			\item Let $\wt{\rho}_{1}\in\mrcusp_{\epsilon}(\wt{G_{r_{1}}})$ and $\wt{\rho}_{2}\in\mrcusp_{\epsilon}(\wt{G_{r_{2}}})$. Then $(\wt{\rho}_{1}\wt{\times}\wt{\rho}_{2})_{\wt{\omega}}$ is reducible only if $r_{1}=r_{2}$, and $(\wt{\rho}_{2}\wt{\times}\wt{\rho}_{1})_{\wt{\omega}}$ is isomorphic to $(\wt{\rho}_{1}\wt{\times}\wt{\rho}_{2})_{\wt{\omega}}$ twisted by an unramified character of $G_{r_{1}+r_{2}}$.
			
			\item Let $\wt{\rho}\in\mrcusp_{\epsilon}(\wt{G_{r_{0}}})$. Then there exists a unique positive real number $s(\wt{\rho})$ such
			that $(\wt{\rho}\wt{\times}\wt{\rho}\nu^{s(\wt{\rho})})_{\wt{\omega}}$ is not irreducible. Moreover, $(\wt{\rho}\wt{\times}\wt{\rho}\nu^{s})_{\wt{\omega}}$ is not irreducible for some $s\in\mbr$ if and only if $s=\pm s(\wt{\rho})$.
			
			\item  In (2), we let $\{\rho\nu^{a},\rho\nu^{a+1},\dots,\rho\nu^{b}\}$ be the cuspidal support of $\mrmc(\wt{\rho})$, where $m=b-a+1$ is a positive integer and $\rho\in\mrcusp(G_{r_{0}/m})$. Then $s(\wt{\rho})=m/n$.
			
		\end{enumerate} 
		In (1) and (2), $\wt{\omega}$ are compatible genuine characters of $Z(\wt{G_{r_{1}+r_{2}}})$ and $Z(\wt{G_{2r_{0}}})$ respectively, such that the corresponding metaplectic tensor products make sense.
		
	\end{proposition}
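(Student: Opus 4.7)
The overall plan is to transport the reducibility question from the covering side to the linear side via Proposition~\ref{propmcindres}, and then to apply the Bernstein-Zelevinsky classification of essentially square integrable representations of $G_{r}$ as generalized Steinberg representations $Z(\rho, [a, b])$. A preliminary observation, following directly from the defining identity \eqref{eqmccorr} together with the equality $\abs{\mrdet(t)} = \abs{\mrdet(x)}$ whenever $t^{n} = x^{n}$, is that $\mrmc(\wt{\rho}\nu^{s}) = \mrmc(\wt{\rho})\nu^{ns}$ for any $s \in \mbr$.

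For part (1), I argue that a reducible induction from cuspidal representations must admit an essentially square integrable subquotient $\wt{\pi} \in \mrsqrt_{\epsilon}(\wt{G_{r_{1}+r_{2}}})$ (this standard fact for reductive groups extends to finite covers). By Frobenius reciprocity, $r_{\beta,(r_{1}+r_{2})}(\wt{\pi})$ contains $(\wt{\rho}_{1}\wt{\otimes}\wt{\rho}_{2})_{\wt{\omega}}$, possibly after a Weyl swap. Proposition~\ref{propmcindres}.(2) then gives $\mrmc(\wt{\pi}) = Z(\rho, [a, b]) \in \mrsqrt(G_{r_{1}+r_{2}})$ whose Jacquet module to $G_{r_{1}} \times G_{r_{2}}$ contains $\mrmc(\wt{\rho}_{1}) \otimes \mrmc(\wt{\rho}_{2}) = Z(\rho_{1}, [a_{1}, b_{1}]) \otimes Z(\rho_{2}, [a_{2}, b_{2}])$. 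Zelevinsky's Jacquet formula for generalized Steinbergs forces $\rho_{1} \simeq \rho_{2} \simeq \rho$ and $[a, b] = [a_{1}, b_{1}] \cup [a_{2}, b_{2}]$, while the constraint that covering-side cuspidality of $\wt{\rho}_{i}$ determines the segment length $m_{i} = b_{i} - a_{i} + 1$ as a function of $r_{i}$ (via the structure of Kazhdan-Patterson covers) forces $r_{1} = r_{2}$. The swap statement follows from the linear-side isomorphism $\mrmc(\wt{\rho}_{1}) \times \mrmc(\wt{\rho}_{2}) \simeq \mrmc(\wt{\rho}_{2}) \times \mrmc(\wt{\rho}_{1})$ (standard intertwining theory on $\mrgl$), lifted via MC to give an isomorphism up to a character of $G_{\beta}/G_{\beta}^{(n)}$, which is unramified.

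For parts (2) and (3), assume $r_{1} = r_{2} = r_{0}$ and consider $\wt{\pi}_{s} := (\wt{\rho}\wt{\times}\wt{\rho}\nu^{s})_{\wt{\omega}}$ for $s > 0$. By the preliminary observation, the lift of $\wt{\pi}_{s}$ in the character sense of Proposition~\ref{propmcindres}.(1) corresponds to $Z(\rho, [a, b]) \times Z(\rho, [a+ns, b+ns])$. Suppose $\wt{\pi}_{s}$ has an essentially square integrable subquotient $\wt{\pi}$. Then $\mrmc(\wt{\pi}) = Z(\rho, [a', b']) \in \mrsqrt(G_{2r_{0}})$ with $b' - a' + 1 = 2m$, and by Proposition~\ref{propmcindres}.(2) its Jacquet module to $G_{r_{0}} \times G_{r_{0}}$ must contain $Z(\rho, [a, b]) \otimes Z(\rho, [a+ns, b+ns])$. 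Zelevinsky's formula identifies this Jacquet module as $Z(\rho, [a', a'+m-1]) \otimes Z(\rho, [a'+m, b'])$; matching forces $a' = a$, $m = b - a + 1$, and $a + m = a + ns$, so $ns = m$ and hence $s = m/n$. Conversely at $s = m/n$, the representation $Z(\rho, [a, b+m]) \in \mrsqrt^{(n)}_{\omega}(G_{2r_{0}})$ lifts via Proposition~\ref{propmclift} to some $\wt{\pi} \in \mrsqrt_{\wt{\omega}'}(\wt{G_{2r_{0}}})$; Proposition~\ref{propmcindres}.(2) together with the Jacquet calculation yields $r_{\beta,(2r_{0})}(\wt{\pi}) = (\wt{\rho}\wt{\otimes}\wt{\rho}\nu^{m/n})_{\wt{\omega}''}$ for a suitable $\wt{\omega}''$, so Frobenius reciprocity gives an embedding $\wt{\pi} \hookrightarrow (\wt{\rho}\wt{\times}\wt{\rho}\nu^{m/n})_{\wt{\omega}''}$, proving reducibility. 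Twisting by unramified characters (as in part (1)) handles arbitrary compatible $\wt{\omega}$, while the $s \leftrightarrow -s$ symmetry (from part (1)'s swap) handles $s = -m/n$, giving uniqueness of $s(\wt{\rho})$.

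The main obstacle will be the Jacquet module matching step that pins down $ns = m$: it requires careful application of the geometric lemma to identify the correct segment decomposition on the linear side, together with consistent tracking of metaplectic tensor products and compatible characters $\wt{\omega}$ through the correspondence. A delicate subsidiary point in part (1) is establishing $m_{1} = m_{2}$ from covering-side cuspidality without circular reference to the upcoming Proposition~\ref{propmcsqrtclassification}; this should follow from the compatibility of MC with Jacquet modules combined with the fact that the cuspidal support of an essentially square integrable representation of $\wt{G_{r}}$ consists of cuspidals of equal size.
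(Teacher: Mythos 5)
The paper's written proof covers only part (3): parts (1) and (2) are quoted directly from Proposition 6.10 of \cite{kaplan2022classification} and not reproved. For part (3) your argument is essentially the paper's: take the essentially square integrable subquotient of $(\wt{\rho}\wt{\times}\wt{\rho}\nu^{s(\wt{\rho})})_{\wt{\omega}}$ (the paper obtains it from Casselman's criterion applied to the length-two decomposition guaranteed by (2)), transport it and its Jacquet module to the linear side via Propositions \ref{propmclift} and \ref{propmcindres}, use $\mrmc(\wt{\rho}\nu^{s})\simeq\mrmc(\wt{\rho})\nu^{ns}$, and conclude from the fact that the cuspidal support of an essentially square integrable representation of $G_{2r_{0}}$ must form a single segment that $a+ns(\wt{\rho})=b+1$. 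Your Jacquet-module matching and the paper's cuspidal-support argument are the same computation. (Two small notational points: in this paper's conventions the essentially square integrable member of the pair is $L(\rho,[a,b])$, not $Z(\rho,[a,b])$, and in Lemma \ref{lemmaLabwtrho} the higher-exponent block appears \emph{first} in the Jacquet module; neither affects the conclusion.)

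The genuine gap is in your self-contained treatment of (1) and (2). The assertion that a reducible parabolic induction from a cuspidal representation of a maximal Levi admits an essentially square integrable subquotient is false in general: at a unitary point of reducibility (the analogue of $s=0$) the constituents are tempered but not square integrable. Excluding reducibility at $s=0$, and showing reducibility occurs only at $s=\pm s(\wt{\rho})$ with $s(\wt{\rho})>0$, is precisely the content of (2), so invoking a square integrable subquotient to prove (1) and (2) is circular; the actual proof in \cite{kaplan2022classification} goes through intertwining operators, Plancherel measure and Knapp--Stein theory. Similarly, your deduction of $r_{1}=r_{2}$ relies on knowing that cuspidality of $\wt{\rho}_{i}$ determines the segment length of $\mrmc(\wt{\rho}_{i})$, which is Proposition \ref{propmcsqrtclassification}.(1) --- proved later, and partly by means of, the present proposition. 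If you quote (1) and (2) as the paper does, your argument for (3) stands.
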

	
	\begin{proof}
		
		The statement (1) and (2) have been proved in \cite{kaplan2022classification}*{Proposition 6.10}. So we focus on statement (3). We let $\beta_{0}=(r_{0},r_{0})$ be a composition of $2r_{0}$. The representation   $\wt{\pi}=(\wt{\rho}\wt{\times}\wt{\rho}\nu^{s(\wt{\rho})})_{\wt{\omega}}$ is reducible and of length 2, and moreover its Jacquet module $r_{(2r_{0}),\beta_{0}}(\wt{\pi})$ consists of $(\wt{\rho}\wt{\otimes}\wt{\rho}\nu^{s(\wt{\rho})})_{\wt{\omega}}$ and $(\wt{\rho}\nu^{s(\wt{\rho})}\wt{\otimes}\wt{\rho})_{\wt{\omega}}$ as its subquotient. Using Casselman's criterion for discrete series (\emph{cf.} \cite{ban2013langlands}*{ Theorem 3.4}), there exists an essentially square integrable subquotient $\wt{\pi}'$ of $\wt{\pi}$. Thus $r_{(2r_{0}),\beta_{0}}(\wt{\pi}')$ is isomorphic to $(\wt{\rho}\wt{\otimes}\wt{\rho}\nu^{s(\wt{\rho})})_{\wt{\omega}}$ or $(\wt{\rho}\nu^{s(\wt{\rho})}\wt{\otimes}\wt{\rho})_{\wt{\omega}}$. We let $\pi'=\mrmc(\wt{\pi}')\in\mrsqrt^{(n)}(G_{r_{0}})$. Using Proposition \ref{propmcindres}.(2), we have $$r_{(2r_{0}),\beta_{0}}(\pi')\simeq r_{(2r_{0}),\beta_{0}}(\mrmc(\wt{\pi}'))\simeq\mrmc(r_{(2r_{0}),\beta_{0}}(\wt{\pi}')),$$
		which is isomorphic to  $\mrmc(\wt{\rho})\otimes\mrmc(\wt{\rho}\nu^{s(\wt{\rho})})$ or  $\mrmc(\wt{\rho}\nu^{s(\wt{\rho})})\otimes\mrmc(\wt{\rho})$ by Proposition \ref{propmcMTPcomp}.
		Thus the cuspidal support of $\pi'$  equals the union of that of $\mrmc(\wt{\rho})$ and $\mrmc(\wt{\rho}\nu^{s(\wt{\rho})})$ (\emph{cf.} \cite{zelevinsky1980induced}). Since $\mrmc(\wt{\rho}\nu^{s(\wt{\rho})})\simeq \mrmc(\wt{\rho})\nu^{ns(\wt{\rho})}$, the union above is $$\{\rho\nu^{a},\rho\nu^{a+1},\dots,\rho\nu^{b},\rho\nu^{a+ns(\wt{\rho})},\rho\nu^{a+ns(\wt{\rho})+1},\dots,\rho\nu^{b+ns(\wt{\rho})}\}.$$ By \cite{zelevinsky1980induced}*{Sect. 9} the only possibility is $a+ns(\wt{\rho})=b+1$, meaning that $s(\wt{\rho})=m/n$.
		
	\end{proof}
	
	
	
	
	Let $m$ be a positive integer dividing $r$, let $a,b$ two real numbers such that $b-a+1=m$, and let $\rho\in\mrcusp(G_{r/m})$. Let $\beta_{m}=(r/m,\dots,r/m)$ be a composition of $r$, and we consider $G_{\beta_{m}}$ as a Levi subgroup of $G_{r}$. We let $L(\rho,[a,b])$ be the unique irreducible quotient of the parabolic induction $\rho\nu^{a}\times\rho\nu^{a+1}\times\dots\times\rho\nu^{b}:=i_{\beta_{m},(r)}(\rho\nu^{a}\otimes\rho\nu^{a+1}\otimes\dots\otimes\rho\nu^{b})$ as an essentially square integrable representation of $G_{r}$ as in \cite{zelevinsky1980induced}. We have a similar theory for $\wt{G_{r}}$
	
	\begin{lemma}[\cite{kaplan2022classification}*{Proposition 7.2}]\label{lemmaLabwtrho}
		
		Let $\wt{\rho}\in\mrcusp_{\epsilon}(\wt{G_{r}})$ and  let $\wt{\omega}$ be a compatible character of $Z(\wt{G_{r}})$ such that $(\wt{\rho}\nu^{as(\wt{\rho})}\wt{\otimes}\wt{\rho}\nu^{(a+1)s(\wt{\rho})}\wt{\otimes}\dots\wt{\otimes}\wt{\rho}\nu^{bs(\wt{\rho})})_{\wt{\omega}}$ makes sense,
		
		(1) There exists a unique irreducible subrepresentation (resp. quotient) of $$(\wt{\rho}\nu^{as(\wt{\rho})}\wt{\times}\wt{\rho}\nu^{(a+1)s(\wt{\rho})}\wt{\times}\dots\wt{\times}\wt{\rho}\nu^{bs(\wt{\rho})})_{\wt{\omega}}$$
		denoted by $Z(\wt{\rho},[a,b])_{\wt{\omega}}$, (resp. $L(\wt{\rho},[a,b])_{\wt{\omega}}$). Moreover, $L(\wt{\rho},[a,b])_{\wt{\omega}}$ is an essentially square integrable representation of $\wt{G_{r}}$;
		
		(2) For an integer $0\leq l\leq r$, we have
		$$
		r_{(r),(l,r-l)}(Z(\wt{\rho},[a,b])_{\wt{\omega}})=\begin{cases} (Z(\wt{\rho},[a,a+l'])_{\wt{\omega}_{1}}\wt{\otimes}Z(\wt{\rho},[a+l'+1,b])_{\wt{\omega}_{2}})_{\wt{\omega}}\quad &\text{if}\quad l'=ml/r\in\mbz,   \\
			0\quad & \text{if}\quad r\nmid ml.
		\end{cases}
		$$ 
		and
		$$
		r_{(r),(l,r-l)}(L(\wt{\rho},[a,b])_{\wt{\omega}})=\begin{cases} (L(\wt{\rho},[b-l'+1,b])_{\wt{\omega}_{1}'}\wt{\otimes}L(\wt{\rho},[a,b-l'])_{\wt{\omega}_{2}'})_{\wt{\omega}}\quad &\text{if}\quad l'=ml/r\in\mbz,   \\
			0\quad & \text{if}\quad r\nmid ml.
		\end{cases}
		$$ 
		where $\wt{\omega}_{1}$, $\wt{\omega}_{2}$, $\wt{\omega}_{1}'$, $\wt{\omega}_{2}'$ are compatible genuine characters.
	\end{lemma} 
	
	The following proposition classifies the image of the metaplectic correspondence of cuspidal and essentially square integrable representations.
	
	\begin{proposition}\label{propmcsqrtclassification}
		
		Let $m$ be a positive integer dividing $r$, let $\rho\in\mrcusp(G_{r/m})$ and let $\pi=L(\rho,[a,b])$ with $m=b-a+1$ satisfying $\omega_{\pi}\rest_{\mu_{n}}=1$. Let $s$ be the order of $\omega_{\rho}\rest_{\mu_{n}}$ which divides $m$.
		
		\begin{enumerate}
			\item If $\pi$ is the lift a certain $\wt{\pi}\in\mrcusp_{\epsilon}(\wt{G_{r}})$, then we have $s=m$. Conversely if $s=m$, then any $\wt{\pi}\in\mrsqrt_{\epsilon}(\wt{G_{r}})$ lifting to $\pi$ is cuspidal.
			
			\item 	Let $m'=m/s$ and let $\wt{\rho}\in\mrcusp_{\epsilon}(\wt{G_{r/m'}})$ such that $\mrmc(\wt{\rho})=L(\rho,[a,a+s-1])$.
			Then for $\omega=\omega_{\pi}$ and  a genuine character $\wt{\omega}$ of $Z(\wt{G_{r}})$ satisfying (\ref{eqomegawtomega}), we have 
			$\mrmc(L(\wt{\rho},[0,m'-1])_{\wt{\omega}})=\pi.$
			Conversely, any $\wt{\pi}\in\mrsqrt_{\wt{\omega}}(\wt{G_{r}})$ is of the form $L(\wt{\rho},[0,m'-1])_{\wt{\omega}}$ for some $m'$ dividing $r$ and some $\wt{\rho}\in\mrcusp_{\epsilon}(\wt{G_{r/m'}})$.
		\end{enumerate} 
		
	\end{proposition}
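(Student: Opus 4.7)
The plan is to combine the Jacquet-module compatibility in Proposition \ref{propmcindres}.(2) with the Zelevinsky classification of essentially square integrable representations on the linear side and its covering-group analogue in Lemma \ref{lemmaLabwtrho}. Throughout write $m' = m/s$ and $\beta_{m'} = (r/m',\dots,r/m')$ with $m'$ entries.

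For part (1), iterating Zelevinsky's Jacquet-module formula for $\pi = L(\rho,[a,b])$, the Jacquet module $r_{(r),\gamma}(\pi)$ is nonzero only when $\gamma$ refines $(r/m,\dots,r/m)$, and in that case each tensor factor has the form $L(\rho,[\ell_1,\ell_2])$ with central character on $\mu_n$ equal to $\omega_\rho^{\ell_2-\ell_1+1}|_{\mu_n}$. Such a factor lies in $\mrsqrt^{(n)}$ precisely when $s$ divides the length $\ell_2-\ell_1+1$, so $r_{(r),\gamma}(\pi) \in \mrsqrt^{(n)}_\omega(G_\gamma)$ iff $\gamma$ refines $\beta_{m'}$. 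By Proposition \ref{propmcindres}.(2), $r_{(r),\gamma}(\wt{\pi}) \neq 0$ iff $\gamma$ refines $\beta_{m'}$, so $\wt{\pi}$ is cuspidal iff $\beta_{m'} = (r)$, iff $s = m$. Both directions of (1) follow at once.

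For part (2), by (1) the representation $L(\rho,[a,a+s-1])$ is essentially square integrable in $G_{r/m'}$ whose segment length $s$ agrees with the order of its central character on $\mu_n$, hence it lifts from a cuspidal $\wt{\rho} \in \mrcusp_\epsilon(\wt{G_{r/m'}})$. Set $\wt{\pi}' = L(\wt{\rho},[0,m'-1])_{\wt{\omega}}$, which exists by Lemma \ref{lemmaLabwtrho}.(1). Since $s(\wt{\rho}) = s/n$ by Proposition \ref{propvaluesrho}.(3), iterating Lemma \ref{lemmaLabwtrho}.(2) produces
\[
r_{(r),\beta_{m'}}(\wt{\pi}') \simeq (\wt{\rho}\nu^{(m'-1)s/n}\wt{\otimes}\cdots\wt{\otimes}\wt{\rho})_{\wt{\omega}},
\]
with the intermediate central characters managed by Theorem \ref{thmMTP}. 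Applying Propositions \ref{propmcindres}.(2) and \ref{propmcMTPcomp}, the Jacquet module of $\mrmc(\wt{\pi}')$ to $\beta_{m'}$ equals $L(\rho,[b-s+1,b])\otimes\cdots\otimes L(\rho,[a,a+s-1])$, which is exactly $r_{(r),\beta_{m'}}(\pi)$. Both $\mrmc(\wt{\pi}')$ and $\pi$ lie in $\mrsqrt^{(n)}_\omega(G_r)$ with the same cuspidal support $\{\rho\nu^a,\dots,\rho\nu^b\}$, so by the Zelevinsky classification they coincide. Conversely, any $\wt{\pi} \in \mrsqrt_{\wt{\omega}}(\wt{G_r})$ lifts under $\mrmc$ to some $L(\rho,[a,b]) \in \mrsqrt^{(n)}_\omega(G_r)$ by Proposition \ref{propmclift} and Zelevinsky; the forward direction then builds $L(\wt{\rho},[0,m'-1])_{\wt{\omega}}$ lifting to the same $\pi$, and the injectivity in Proposition \ref{propmclift} forces $\wt{\pi} = L(\wt{\rho},[0,m'-1])_{\wt{\omega}}$.

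The main technical obstacle is the clean iteration of Lemma \ref{lemmaLabwtrho}.(2) down to the full Levi $\beta_{m'}$: one has to inductively track the intermediate central characters $\wt{\omega}_i$ on $Z(\wt{G_{ir/m'}})$ via Theorem \ref{thmMTP} so that the resulting metaplectic tensor product sits in the prescribed $\wt{\omega}$-isotypic piece. The remaining steps are a routine transport of Zelevinsky's uniqueness through the metaplectic correspondence, justified by the compatibility theory developed in \S \ref{subsectioncomppindjac}.
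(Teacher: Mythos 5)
Your proof is correct and follows essentially the same route as the paper: Jacquet-module compatibility of $\mrmc$ (Proposition \ref{propmcindres}.(2)) combined with Zelevinsky's Jacquet-module/cuspidal-support computations and Lemma \ref{lemmaLabwtrho}, then the uniqueness of lifts from Proposition \ref{propmclift}. The only cosmetic difference is that for the forward direction of (1) you invoke the contrapositive of Proposition \ref{propmcindres}.(2) directly, whereas the paper re-runs the underlying character computation; both are valid.
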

	
	\begin{proof}
		
		We first prove (1). We choose $\wt{\pi}\in\mrsqrt_{\epsilon}(\wt{G_{r}})$ such that $\mrmc(\wt{\pi})=\pi$. First we assume $\wt{\pi}$ to be cuspidal. Then for any $l'=1,\dots m-1$ and $l=l'r/m$, we have $r_{(r),(l,r-l)}(\wt{\pi})=0$. Using a similar argument to the proof of Proposition \ref{propmcindres}.(2) (more precisely, the combination of \eqref{eqmccorr} and \eqref{eqHCcharJac}) we may prove that for any $x=\mrdiag(x_{1},x_{2})\in G_{l}\times G_{r-l}$ such that $x_{1}^{n}$ and $x_{2}^{n}$ are elliptic and $\abs{\mrdet(x_{1})}^{1/l}>\abs{\mrdet(x_{2})}^{1/(r-l)}$, we have
		\begin{align*}
			0&=\bigg(\sum_{\{t_{1}\mid t_{1}^{n}=x_{1}^{n}\}}\theta_{L(\rho,[b-l'+1,b])}(t_{1})\cdot\epsilon(x_{1}^{*}/t_{1}^{*})\bigg)\cdot\sum_{\{t_{2}\mid t_{2}^{n}=x_{2}^{n}\}}\theta_{L(\rho,[a,b-l'])}(t_{2})\cdot\epsilon(x_{2}^{*}/t_{2}^{*})\\
			&=\theta_{L(\rho,[b-l'+1,b])}(x_{1})\cdot\theta_{L(\rho,[a,b-l'])}(x_{2})\cdot\bigg(\sum_{\zeta_{1}\in\mu_{n}}\omega_{\rho}(\zeta_{1})^{l'}\bigg)\cdot\bigg(\sum_{\zeta_{2}\in\mu_{n}}\omega_{\rho}(\zeta_{2})^{m-l'}\bigg),
		\end{align*}
		where for the second equation we use the fact that $x_{i}$ is elliptic, and thus $x_{i}=t_{i}\zeta_{i}$ for a certain $\zeta_{i}\in\mu_{n}$ and $x_{i}^{*}=t_{i}^{*}$.
		Since $l'$ and $x$ are arbitrary,  $\omega_{\rho}\rest_{\mu_{n}}$ must be of order $m$. Conversely if $\omega_{\rho}\rest_{\mu_{n}}$ is of order $m$, then for any $1\leq l\leq r-1$ the representation $r_{(r),(l,r-l)}(L(\rho,[a,b]))$ is not in $\mrsqrt_{\omega}^{(n)}(G_{l}\times G_{r-l})$, thus using Proposition \ref{propmcindres}.(2) $r_{(r),(l,r-l)}(\wt{\pi})=0$. So $\wt{\pi}$ is cuspidal. 
		
		Now we prove (2). We have that $\mrmc(L(\wt{\rho},[0,m'-1])_{\wt{\omega}})$ is in $\mrsqrt(G_{r})$ and by Proposition \ref{propvaluesrho}, $s(\wt{\rho})=s/n$. Using Proposition \ref{propmclift}, Proposition \ref{propmcindres}.(2) and Lemma \ref{lemmaLabwtrho}.(2), for the composition $\beta_{m'}=(r/m',\cdots,r/m')$ of $r$, we have
		\begin{align*}
			&r_{(r),\beta_{m'}}(\mrmc(L(\wt{\rho},[0,m'-1])_{\wt{\omega}}))=\mrmc(r_{(r),\beta_{m'}}(L(\wt{\rho},[0,m'-1])_{\wt{\omega}}))\\
			=&\mrmc((\wt{\rho}\nu^{s(\wt{\rho})(m'-1)}\wt{\otimes}\dots\wt{\otimes}\wt{\rho}\nu^{s(\wt{\rho})}\wt{\otimes}\wt{\rho})_{\wt{\omega}})\\
			=&L(\rho,[a+(m'-1)s,b])\otimes\dots \otimes L(\rho,[a+s,a+2s-1])\otimes L(\rho,[a,a+s-1])  .
		\end{align*}
		Thus the cuspidal support of $\mrmc(L(\wt{\rho},[0,m'-1])_{\wt{\omega}})$ is $\{\rho\nu^{a},\rho\nu^{a+1},\dots,\rho\nu^{b}\}$,  implying that $\pi=\mrmc(L(\wt{\rho},[0,m'-1])_{\wt{\omega}})$. Finally any $\wt{\pi}\in\mrsqrt_{\wt{\omega}}(\wt{G_{r}})$ lifts to a certain $\pi=L(\rho,[a,b])$ as above. By Proposition \ref{propmclift} we must have $\wt{\pi}=L(\wt{\rho},[0,m'-1])_{\wt{\omega}}$ for $m'=m/s$ and $\wt{\rho}$ satisfying $\mrmc(\wt{\rho})=L(\rho,[a,a+s-1])$.
		
	\end{proof}
	
	\subsection{Classification of essentially tempered representations}
	
	We further study essentially tempered representations.
	
	\begin{proposition}\label{proptempclasify}
		
		\begin{enumerate}
			\item	Let $\wt{\pi}_{i}$ be a genuine irreducible square integrable representation of $\wt{G_{r_{i}}}$ for each $i$ with $r_{1}+\dots+r_{k}=r$ and let $\wt{\omega}$ be a genuine character of $Z(\wt{G_{r}})$ satisfying \eqref{eqomegacompatible}, then $(\wt{\pi}_{1}\wt{\times}\dots\wt{\times}\wt{\pi}_{k})_{\wt{\omega}}$ is an irreducible and tempered representation of $\wt{G_{r}}$.
			
			\item Conversely every genuine irreducible tempered representation $\wt{\pi}$ of $\wt{G_{r}}$ is of the form $(\wt{\pi}_{1}\wt{\times}\dots\wt{\times}\wt{\pi}_{k})_{\wt{\omega}}$ as above. Such $\wt{\pi}$ correspond in bijection with $\wt{G_{r}}$-conjugacy classes of the metaplectic tensor product $(\wt{\pi}_{1}\wt{\otimes}\dots\wt{\otimes}\wt{\pi}_{k})_{\wt{\omega}}$ with $r_{i}$, $\wt{\pi}_{i}$, $\wt{\omega}$ as in (1).

		\end{enumerate}

	\end{proposition}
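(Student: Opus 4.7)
The plan is to prove (1) by combining unitarity, Casselman's criterion, and a Jacquet-module multiplicity count via Frobenius reciprocity, and then to deduce (2) by analyzing the Zelevinsky-type classification of cuspidal supports applied to an arbitrary tempered representation.

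\textbf{Part (1).} Since each $\wt{\pi}_i$ is square integrable, its central character is unitary; the compatibility condition \eqref{eqomegacompatible} then forces $\wt{\omega}$ to be unitary (on the finite-index subgroup $\bs{s}(F^{\times n}I_r)$, hence on all of $Z(\wt{G_r})$). Thus the metaplectic tensor product $(\wt{\pi}_1 \wt{\otimes} \dots \wt{\otimes} \wt{\pi}_k)_{\wt{\omega}}$ is unitarizable, and so is its normalized parabolic induction $(\wt{\pi}_1 \wt{\times} \dots \wt{\times} \wt{\pi}_k)_{\wt{\omega}}$. Unitarity yields semisimplicity; temperedness follows from the covering analog of Casselman's criterion already invoked in the proof of Proposition \ref{propvaluesrho}, applied to $r_{(r),\gamma'}((\wt{\pi}_1 \wt{\times} \dots \wt{\times} \wt{\pi}_k)_{\wt{\omega}})$ for every standard parabolic $P_{\gamma'}$. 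Computing these Jacquet modules via the Bernstein--Zelevinsky geometric lemma (which extends to $\wt{G_r}$ thanks to the canonical splitting of unipotent radicals), the exponents that appear are central exponents of the $\wt{\pi}_i$ distributed across the Levi factors, and square integrability provides the required inequalities. Writing $m$ for the number of irreducible summands, any summand $\wt{\sigma}$ embeds in the induction, so Frobenius reciprocity produces a nonzero surjection $r_{(r),\beta}(\wt{\sigma}) \twoheadrightarrow (\wt{\pi}_1 \wt{\otimes} \dots \wt{\otimes} \wt{\pi}_k)_{\wt{\omega}}$. Hence $m$ is at most the Jordan--H\"older multiplicity $\mu$ of this tensor product in $r_{(r),\beta}((\wt{\pi}_1 \wt{\times} \dots \wt{\times} \wt{\pi}_k)_{\wt{\omega}})$.

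\textbf{Main obstacle.} The crux is to show $\mu = 1$. A direct approach exploits the geometric lemma: the Jacquet module has a filtration indexed by $W_\beta \backslash W / W_\beta$, and one must verify that no nontrivial double coset yields $(\wt{\pi}_1 \wt{\otimes} \dots \wt{\otimes} \wt{\pi}_k)_{\wt{\omega}}$ as a subquotient. I would short-circuit this via the metaplectic correspondence. By Proposition \ref{propmcindres}.(1) the induction lifts to the linear induction $\pi_1 \times \dots \times \pi_k$ with $\pi_i = \mrmc(\wt{\pi}_i) \in \mrsqrt(G_{r_i})$, which is irreducible by Bernstein's theorem. If we had $m \geq 2$, Proposition \ref{propmcindres}.(2) applied to each tempered summand $\wt{\sigma}_j$ together with the character identity \eqref{eqmccorr} would yield more than one character of the linear side fitting the lift of a single irreducible representation, contradicting the irreducibility on $G_r$. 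Hence $m=1$.

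\textbf{Part (2).} Let $\wt{\pi}$ be a genuine irreducible tempered representation of $\wt{G_r}$. Standard cuspidal-support theory combined with Proposition \ref{propmcsqrtclassification} produces essentially square integrable representations $\wt{\delta}_1, \dots, \wt{\delta}_k$ of appropriate $\wt{G_{r_i}}$ and a compatible character $\wt{\omega}$ of $Z(\wt{G_r})$ such that $\wt{\pi}$ is a constituent of $(\wt{\delta}_1 \wt{\times} \dots \wt{\times} \wt{\delta}_k)_{\wt{\omega}}$, with the $\wt{\delta}_i$ arranged so this product is standard. Temperedness of $\wt{\pi}$ forces each $\wt{\delta}_i$ to be genuinely square integrable, not merely essentially so: a nontrivial unramified twist by $\nu^s$ would contribute a non-tempered exponent to any Jacquet module containing $\wt{\delta}_1 \wt{\otimes} \dots \wt{\otimes} \wt{\delta}_k$, contradicting Casselman's criterion applied to $\wt{\pi}$. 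Having arranged this, part (1) applies, so the induction is already irreducible and equals $\wt{\pi}$. Finally, the bijection between tempered $\wt{\pi}$ and $\wt{G_r}$-conjugacy classes of compatible tuples $((\wt{\pi}_1,\dots,\wt{\pi}_k);\wt{\omega})$ follows from Theorem \ref{thmMTP}.(4), which describes exactly when two metaplectic tensor products agree, together with the observation that permutations of the $\wt{\pi}_i$ correspond to Weyl conjugations inside $\wt{G_\beta}$, hence to $\wt{G_r}$-conjugations on the inducing data.
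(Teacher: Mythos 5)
Your reduction of part (1) to the single claim that the induced representation is irreducible is the right framing, and you correctly identify this as the crux (it is exactly the point where Flicker--Kazhdan's original argument fails). But your proposed resolution via the metaplectic correspondence has a genuine gap. Proposition \ref{propmcindres}.(2) is a statement about Jacquet modules of \emph{essentially square integrable} representations of $\wt{G_{\beta}}$ --- its proof rests on Lemma \ref{lemmalinearrelasqrtrep}, a linear independence statement for square integrable characters evaluated on the elliptic locus --- so it cannot be ``applied to each tempered summand $\wt{\sigma}_{j}$''. More importantly, the character identity \eqref{eqmccorr} for $i_{\beta,(r)}(\wt{\rho})$ and $i_{\beta,(r)}(\rho)$ only constrains the \emph{sum} $\sum_{j}\theta_{\wt{\sigma}_{j}}$ of the characters of the summands; this is perfectly consistent with $m\geq 2$ even when the linear side $\pi_{1}\times\dots\times\pi_{k}$ is irreducible, and no ``second character on the linear side'' is produced. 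To extract the number of summands from such an identity one needs either an orthogonality or linear-independence input on the locus where \eqref{eqmccorr} holds (available for square integrable constituents, not for general tempered ones), or an independently constructed irreducible tempered representation to compare against together with the multiplicity count of Theorem \ref{thmMTP}.(2) --- and the latter is only available \emph{after} the present proposition is proved (it is Proposition \ref{propmclifttemp}, whose proof uses this classification). As written, your argument is either circular or incomplete.

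The paper closes this gap by a different route: using Proposition \ref{propmcsqrtclassification} it writes each $\wt{\pi}_{i}$ as $L(\wt{\rho}_{i},[(-m_{i}+1)/2,(m_{i}-1)/2])_{\wt{\omega}_{i}}$ for a unitary cuspidal $\wt{\rho}_{i}$, and then invokes the irreducibility of products of pairwise \emph{weakly unlinked} segments from \cite{kaplan2022classification}*{Theorem 7.5}; temperedness then follows from the covering analogue of Waldspurger's Lemma III.2.3. For part (2) your sketch is close to the paper's, but note that the bijection also requires the uniqueness up to conjugacy of the discrete inducing datum (the covering analogue of Waldspurger's Proposition III.4.1); Theorem \ref{thmMTP}.(4) only compares metaplectic tensor products on a fixed Levi and does not by itself rule out that two non-conjugate data induce to the same irreducible tempered representation.
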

	
	\begin{proof}
		
		This proposition was first noted in \cite{flicker1986metaplectic}*{ Proposition 27, p98}, with an essential usage of the metaplectic correspondence (\emph{cf.} \cite{flicker1986metaplectic}*{Lemma 27, p98}) as well as some non-trivial results in the book of Silberger that are not explicitly written down for a covering group. Here we give a more elementary proof instead.
		
		Using Proposition \ref{propmcsqrtclassification}, for each $i$ we may find a certain positive integer $m_{i}$ dividing $r_{i}$, a genuine unitary cuspidal representation $\wt{\rho}_{i}$ of $\wt{G_{r_{i}/m_{i}}}$ and a genuine character $\wt{\omega}_{i}$ of $Z(\wt{G_{r_{i}}})$ such that
		$\wt{\pi}_{i}=L(\wt{\rho}_{i},[(-m_{i}+1)/2,(m_{i}-1)/2])_{\wt{\omega}_{i}}$. 
		
		\begin{lemma}
			
			For a compatible genuine character  $\wt{\omega}$ of $Z(\wt{G_{r}})$, the representation 
			$$(L(\wt{\rho}_{1},[(-m_{1}+1)/2,(m_{1}-1)/2])_{\wt{\omega}_{1}}\wt{\times}\dots\wt{\times} L(\wt{\rho}_{k},[(-m_{k}+1)/2,(m_{k}-1)/2])_{\wt{\omega}_{k}})_{\wt{\omega}}$$ 
			is irreducible.
			
		\end{lemma}
		
		\begin{proof}
			
			Write $\Delta_{i}$ for the segment $\{\wt{\rho}_{i}\nu^{(-m_{i}+1)s(\wt{\rho}_{i})/2},\wt{\rho}_{i}\nu^{(-m_{i}+3)s(\wt{\rho}_{i})/2},\dots,\wt{\rho}_{i}\nu^{(m_{i}-1)s(\wt{\rho}_{i})/2}\}$, then the lemma follows from the fact that $\Delta_{i}$ are pairwise weakly unlinked. See \cite{kaplan2022classification}*{Theorem 7.5}.
			
		\end{proof}
		
		Using this lemma and the fact that the parabolic induction of a square integrable representation is (not necessarily irreducible) tempered (\emph{cf.} \cite{waldspurger2003formule}*{Lemma III.2.3}), $(\wt{\pi}_{1}\wt{\times}\dots\wt{\times}\wt{\pi}_{k})_{\wt{\omega}}$ is tempered. The second statement follows readily from the following lemma.
		
		\begin{lemma}\cite{waldspurger2003formule}*{Proposition III.4.1} Any irreducible tempered representation $\wt{\pi}$ is a direct summand of a certain parabolic induction $i_{\beta,(r)}(\wt{\rho})$, where $\wt{\rho}$ is an irreducible square integrable representation of a Levi subgroup $\wt{G_{\beta}}$ of $\wt{G_{r}}$. Such a pair $(\wt{G_{\beta}},\wt{\rho})$ is unique up to $\wt{G_{r}}$-conjugacy.
			
		\end{lemma}
		
		We remark that although the results we cited above are not for covering groups, the same argument indeed works for a general finite central extension of a $p$-adic reductive group. Note that the argument is based on the definition of square integrable and tempered representations via exponents, which is known in general \cite{ban2013langlands}*{Theorem 3.4 and Theorem 3.5}. So the original proof can also be generalized without difficulty.
		

		
	\end{proof}
	
	Finally we study the metaplectic correspondence for essentially tempered representations. Let  $r'=r_{1}'+\dots+r_{l}'$ and  $\pi'=\rho_{1}\times\dots\times\rho_{l}\in\mrtemp(G_{r'})$ with $\rho_{i}\in\mrsqrt(G_{r_{i}'})$. The representation $\pi'$ is called \emph{metic}\footnote{It is an abbreviation of metaplectic, a notation introduced by Flicker-Kazhdan.} if each central character $\omega_{\rho_{i}}$ restricted to $\mu_{n}$ is trivial. In general, a representation $\pi=\pi_{1}\otimes\dots\otimes\pi_{k}\in\mrtemp(G_{\beta})$ with each $\pi_{i}\in\mrtemp(G_{r_{i}})$ is called \emph{metic} if each $\pi_{i}$ is metic, where $\beta=(r_{1},\dots,r_{k})$ is a composition of $r$ as before. We denote by $\mrtemp_{\omega}(G_{\beta})$ the set of equivalence classes of irreducible essentially tempered representations whose central character restricted to $Z(G_{r})$ is $\omega$, and by $\mrtemp^{(n)}_{\omega}(G_{\beta})$ its subset consisting of metic ones.
	
	\begin{proposition}\label{propmclifttemp}
		
		For any $\pi\in\mrtemp^{(n)}_{\omega}(G_{\beta})$, there exists a unique $\wt{\pi}\in\mrtemp_{\wt{\omega}}(\wt{G_{\beta}})$ that lifts to $\pi$, and conversely any such $\wt{\pi}$ lifts to some $\pi$.
		
	\end{proposition}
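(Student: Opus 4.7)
The plan is to reduce to the essentially square integrable case via the classification of essentially tempered representations established in Proposition \ref{proptempclasify}, following closely the template of Proposition \ref{propmclift}.

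For existence, given $\pi = \pi_{1}\otimes\dots\otimes\pi_{k}\in\mrtemp^{(n)}_{\omega}(G_{\beta})$, each metic $\pi_{i}\in\mrtemp(G_{r_{i}})$ may be written as $\pi_{i}\simeq \rho_{i,1}\times\dots\times\rho_{i,l_{i}}$ with $\rho_{i,j}\in\mrsqrt(G_{r_{i,j}})$; the metic condition translates to $\omega_{\rho_{i,j}}\rest_{\mu_{n}}=1$, so $\rho_{i,j}\in\mrsqrt^{(n)}(G_{r_{i,j}})$. Applying Proposition \ref{propmclift} to each $\rho_{i,j}$ produces a unique essentially square integrable lift $\wt{\rho}_{i,j}\in\mrsqrt_{\epsilon}(\wt{G_{r_{i,j}}})$. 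Choosing compatible genuine characters $\wt{\omega}_{i}$ of $Z(\wt{G_{r_{i}}})$ (via \eqref{eqomegawtomega} block-by-block) so that the overall central character on $Z(\wt{G_{r}})$ equals the prescribed $\wt{\omega}$, I form the Bernstein-Zelevinsky product $\wt{\pi}_{i} := (\wt{\rho}_{i,1}\wt{\times}\dots\wt{\times}\wt{\rho}_{i,l_{i}})_{\wt{\omega}_{i}}$, which by Proposition \ref{proptempclasify}.(1) is irreducible tempered, and then assemble the metaplectic tensor product $\wt{\pi}:=(\wt{\pi}_{1}\wt{\otimes}\dots\wt{\otimes}\wt{\pi}_{k})_{\wt{\omega}}\in\mrtemp_{\wt{\omega}}(\wt{G_{\beta}})$. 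The fact that $\wt{\pi}$ lifts to $\pi$ follows by combining Proposition \ref{propmcMTPcomp} (for the outer metaplectic tensor product) with Proposition \ref{propmcindres}.(1) applied iteratively to each internal parabolic induction.

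The converse direction is symmetric: given $\wt{\pi}\in\mrtemp_{\wt{\omega}}(\wt{G_{\beta}})$, Proposition \ref{proptempclasify}.(2) writes it as a metaplectic tensor product of Bernstein-Zelevinsky products of essentially square integrable $\wt{\rho}_{i,j}$; Proposition \ref{propmclift} provides each lift $\rho_{i,j}=\mrmc(\wt{\rho}_{i,j})\in\mrsqrt^{(n)}$, and forming the analogous linear parabolic inductions and tensor products yields $\pi\in\mrtemp^{(n)}_{\omega}(G_{\beta})$, to which $\wt{\pi}$ lifts by the same two compatibility results.

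For uniqueness, suppose $\wt{\pi},\wt{\pi}'\in\mrtemp_{\wt{\omega}}(\wt{G_{\beta}})$ both lift to $\pi$. Equation \eqref{eqmccorr} forces $\theta_{\wt{\pi}}(x^{*})=\theta_{\wt{\pi}'}(x^{*})$ for every $x\in G_{\beta}$ with $x^{n}$ semi-simple regular. By Remark \ref{remTrpitildesupp} the characters of $\wt{\pi}$ and $\wt{\pi}'$ are supported on $Z(\wt{G_{r}})\bigcup_{T}\wt{T^{(n)}}$, where they are determined (up to twists by $\mu_{n}$, absorbed by $\epsilon$) by their values on such elements $x^{*}$; combined with the common restriction $\wt{\omega}$ to $Z(\wt{G_{r}})$, the locally integrable functions $\theta_{\wt{\pi}}$ and $\theta_{\wt{\pi}'}$ agree everywhere, so $\wt{\pi}\simeq\wt{\pi}'$ by linear independence of characters.

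The main obstacle is that, unlike in Proposition \ref{propmclift}, the Harish-Chandra character of a tempered representation is not determined by its restriction to the elliptic locus alone, so the slick argument from Remark \ref{remTrpisupp} is not available; one must invoke the full support description in Remark \ref{remTrpitildesupp} together with the prescribed central character on $Z(\wt{G_{r}})$. A secondary subtlety, already addressed by Proposition \ref{proptempclasify}, is that the parabolically induced object built from essentially square integrable lifts is automatically irreducible and tempered — this is precisely where the linear argument of \cite{flicker1986metaplectic} was incomplete and where the metaplectic tensor product machinery from \S \ref{subsectionMTP} is essential.
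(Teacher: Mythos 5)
Your proposal is correct and follows essentially the same route as the paper: decompose into essentially square integrable constituents via Proposition \ref{proptempclasify}, lift those by Proposition \ref{propmclift}, reassemble with Proposition \ref{propmcMTPcomp} and Proposition \ref{propmcindres}.(1), and get uniqueness of $\wt{\pi}$ from Remark \ref{remTrpitildesupp}. You also correctly note (as the paper does right after the proposition) that uniqueness in the other direction is not available because Remark \ref{remTrpisupp} only applies to essentially square integrable representations.
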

	
	\begin{proof}
		
		We first consider $\beta=(r)$ and $G_{\beta}=G_{r}$. We write $\wt{\pi}=(\wt{\rho}_{1}\wt{\times}\dots\wt{\times}\wt{\rho}_{l})_{\wt{\omega}}$ with $\wt{\rho}_{i}$ essentially square integrable. We let $\rho_{i}=\mrmc(\wt{\rho}_{i})$ for each $i$. By Proposition \ref{propmcMTPcomp} and Proposition \ref{propmcindres}.(1), $\wt{\pi}$ lifts to $\rho_{1}\times\dots\times\rho_{l}$ that is essentially tempered and metic. Conversely let $\pi=\rho_{1}\times\dots\times\rho_{l}$ with each $\rho_{i}$ essentially square integrable, such that the restriction of $\omega_{\rho_{i}}$ to $\mu_{n}$ is trivial. Using Proposition \ref{propmclift} we choose $\wt{\rho}_{i}$ to be an essentially square integrable representation such that $\mrmc(\wt{\rho}_{i})=\rho_{i}$, then $(\wt{\rho}_{1}\wt{\times}\dots\wt{\times}\wt{\rho}_{l})_{\wt{\omega}}$ lifts to $\pi$. For general $\beta$ using Proposition \ref{propmcMTPcomp} and the above case, similarly we may prove that every $\wt{\pi}\in\mrtemp^{(n)}_{\omega}(G_{\beta})$ lifts to a $\pi\in\mrtemp_{\omega}(G_{\beta})$, and conversely every $\pi\in\mrtemp_{\omega}(G_{\beta})$ is a lift of a certain $\wt{\pi}\in\mrtemp^{(n)}_{\omega}(G_{\beta})$. Finally such $\widetilde{\pi}$ lifting to $\pi$ is unique by Remark \ref{remTrpitildesupp}.
		
	\end{proof}
	
	However in the above proposition, it is unclear to the author if  $\pi\in\mrtemp^{(n)}_{\omega}(G_{\beta})$, as a lift of a certain $\wt{\pi}\in\mrtemp_{\wt{\omega}}(\wt{G_{\beta}})$, is unique or not. So until now we cannot claim the existence of a map
	$\mrmc:\mrtemp_{\wt{\omega}}(\wt{G_{\beta}})\rightarrow\mrtemp_{\omega}^{(n)}(G_{\beta})$, $\wt{\pi}\mapsto\pi.$

	\section{Calculation of Whittaker dimension}
	
	In this section, we consider one of the main important applications of metaplectic correspondence: calculating the Whittaker dimension of a representation $\wt{\pi}$ in $\mrirr_{\epsilon}(\wt{G_{r}})$.
	
	\subsection{A general introduction of the Harish-Chandra germ expansion}\label{subsectionHCgerm}
	
	In this subsection, let $G$ be a connected reductive group over $F$, let $\mfg$ be the Lie algebra of $G$. We won't recall concrete definition of some notation in this part, but indeed the only case we are interested in is $G$ being a Levi subgroup $G_{\beta}$ of $G_{r}$. So the readers may just imagine what happens in general and refer to the reference we provide.
	
	Let $\pi$ be an irreducible representation of $G$. First we briefly recall the definition of the Harish-Chandra germ function (\emph{cf.} \cite{harishchandra1999admissible}). Let $\theta_{\pi}$ be the Harish-Chandra character of $\pi$, which has the following germ expansion
	$$\theta_{\pi}(x\mrexp(X))=\sum_{\mco\in\mrnil(\mfg_{x})}c_{\pi,\mco}(x)\hat{\jmath}(X,\mco),$$
	where 
	\begin{itemize}
		\item $x$ is a semi-simple element of $G$;
		\item $X$ is a semi-simple regular element in a small neighborhood of $0$ in $\mfg$;
		\item $\mrexp:\mfg\rightarrow G$ denotes the exponential map;
		\item $\mfg_{x}$ denotes the Lie algebra of centralizer of $x$ in $G$;
		\item $\mrnil(\mfg_{x})$ denotes the set of nilpotent orbits of $\mfg_{x}$;
		\item $\hat{\jmath}(\cdot,\mco)$ denotes the Fourier transform of the orbital integral along $\mco$, which is a smooth function on semi-simple regular elements of $\mfg$;
		\item $c_{\pi,\mco}(x)$ is a complex number depending on $\pi$, $\mco$ and $x$.
	\end{itemize} 
	We write $\mrnil_{\mrreg}(\mfg_{x})$ for the set of regular nilpotent orbits of $\mfg_{x}$, 
	and we define $$c_{\pi}(x):=\car{\mrnil_{\mrreg}(\mfg_{x})}^{-1}\sum_{\mco_{0}\in\mrnil_{\mrreg}(\mfg_{x})}c_{\pi,\mco_{0}}(x)$$ as a complex function on the set of semi-simple elements in $G$, called the \emph{Harish-Chandra germ function} of $\pi$. The following formula can be used to calculate this germ function (\cite{beuzart2020local}*{ Proposition 4.5.1}):
	\begin{equation}\label{eqcpilimtheta}
		\Delta^{G}(x)c_{\pi}(x)=
		\begin{cases}
			\car{W(G_{x},T_{x})}^{-1}\lim_{x'\rightarrow x}\Delta^{G}(x')\theta_{\pi}(x'),\quad&\text{if}\ G_{x}\ \text{is quasi-split},\\
			0, \quad&\text{otherwise},
		\end{cases}
	\end{equation}
	where 
	\begin{itemize}
		\item $D^{G}(x)$ denotes the Weyl discriminant of $G$ and $\Delta^{G}(x)=D^{G}(x)^{1/2}$;
		\item $G_{x}$ denotes the neutral connected component of the centralizer of $x$ in $G$;
		\item when $G_{x}$ is quasi-split, $T_{x}$ denotes the maximal split torus contained in a Borel subgroup of $G_{x}$;
		\item $W(G_{x},T_{x})$ denotes the corresponding Weyl group;
		\item $x'$ in the limit are semi-simple regular elements in $T_{x}$. 
	\end{itemize}  
	
	Similarly we consider the corresponding theory for a covering group. Let $\wt{G}$ be a central extension of $G$ by $\mu_{n}$ and let $\wt{\pi}$ be an irreducible representation of $\wt{G}$. Let $x^{*}\in\wt{G}$ such that $\bs{p}(x^{*})$ is semi-simple. To simplify our discussion, we also assume $x^{*}$ to be \emph{good}, meaning that for any $y\in G$ commuting with $\bs{p}(x^{*})$, we have that $x^{*}$  and $\bs{s}(y)$ commute. Indeed, when $x^{*}$ is semi-simple and good, the character $[x^{*},\cdot]:\wt{G_{\bs{p}(x^{*})}}\rightarrow\mu_{n}$ is trivial, where $[\cdot,\cdot]$ denotes the commutator in $\wt{G}$. Thus in this special case the results in \cite{li2012formule}*{Section 4} are largely simplified. 
	
	For the Harish-Chandra character $\theta_{\wt{\pi}}$ we still have the following germ expansion (\cite{li2012formule}*{Th\'eor\`eme 4.3.2}):
	$$\theta_{\wt{\pi}}(x^{*}\mrexp(X))=\sum_{\mco\in\mrnil(\mfg_{\bs{p}(x^{*})})}c_{\wt{\pi},\mco}(x^{*})\hat{\jmath}(X,\mco),$$
	where 
	\begin{itemize}
		\item $X$ is a semi-simple regular element in a small neighborhood of $0$ in $\mfg$;
		\item $\mrexp(X)$ is realized as an element in $\wt{G}$ via the splitting $\bs{s}$;
		\item $c_{\wt{\pi},\mco}(x^{*})$ is a complex number depending on $\wt{\pi}$, $\mco$ and $x^{*}$.
	\end{itemize}
	We similarly define
	$$c_{\wt{\pi}}(x^{*}):=\car{\mrnil_{\mrreg}(\mfg_{\bs{p}(x^{*})})}^{-1}\sum_{\mco_{0}\in\mrnil_{\mrreg}(\mfg_{\bs{p}(x^{*})})}c_{\wt{\pi},\mco_{0}}(x^{*})$$ as a complex function on the set of good semi-simple elements in $G$, called the \emph{Harish-Chandra germ function} of $\wt{\pi}$. An identical argument of \cite{beuzart2020local}*{ Proposition 4.5.1} gives us 
	\begin{equation}\label{eqcpilimthetacover}
		\Delta^{G}(\bs{p}(x^{*}))c_{\wt{\pi}}(x^{*})=
		\begin{cases}
			\car{W(G_{\bs{p}(x^{*})},T_{\bs{p}(x^{*})})}^{-1}\lim_{x'^{*}\rightarrow x^{*}}\Delta^{G}(\bs{p}(x'^{*}))\theta_{\wt{\pi}}(x'^{*}),\quad&\text{if}\ G_{\bs{p}(x^{*})}\ \text{is quasi-split},\\
			0, \quad&\text{otherwise,}
		\end{cases}
	\end{equation}
	where $x'^{*}$ in the limit are good semi-simple regular elements in $\wt{T_{\bs{p}(x^{*})}}$.

	\subsection{Whittaker space and dimension}
	
	We come back to our study of Kazhdan-Patterson covering groups. Let $\beta=(r_{1},\dots,r_{k})$ be a composition of $r$, and we write $G=G_{\beta}$ to simplify our notation. Let $N$ be the unipotent subgroup of a Borel subgroup of $G$. Let $\psi$ be a generic character of $N$, which means that for any simple root $\alpha$ related to $N$ and the corresponding subgroup $N_{\alpha}$ of $N$, the restriction $\psi\rest_{N_{\alpha}}$ is non-trivial. It is easy to verify that such kind of pairs $(N,\psi)$ form a single $G$-conjugacy class. 
	
	For a finite length genuine representation $\wt{\pi}$ of $\wt{G}$, we define 
	$$\mrwh(\wt{\pi})=\mrhom_{N}(\wt{\pi},\psi)$$
	the \emph{Whittaker space} of $\wt{\pi}$, where as before we regard $N$ as a subgroup of $\wt{G}$. As we explained, this vector space essentially does not depend on the choice of the pair $(N,\psi)$.
	
	We note that $\mrwh$, as a functor from $\mrrep_{\epsilon}(\wt{G})$ to the category of complex vector spaces, is exact. Since when $\beta=(r)$, using the argument in \cite{kazhdan1984metaplectic}*{Theorem I.5.3} the functor $\mrwh$ is realized by taking the $r$-th Bernstein-Zelevinsky derivative, which is an exact functor. In general, for each $i$ we may similarly define $r_{i}$-th ``partial Bernstein-Zelevinsky functor" with respect to the $i$-th block in $G$, which is also exact. The functor $\mrwh$ is realized by taking the $r_{i}$-th partial Bernstein-Zelevinsky functor for each $i$.
	
	We define $d_{\wt{\pi}}:=\mrdim_{\mbc}(\mrwh(\wt{\pi}))$, which is finite because of the following proposition.
	
	\begin{proposition}[\cite{kazhdan1984metaplectic}*{Theorem I.5.3} or \cite{patel2015theorem}*{Theorem 2}]\label{propwhitdim}
		
		For $\wt{\pi}$ irreducible we have $d_{\wt{\pi}}=c_{\wt{\pi}}(I_{r})$. 
		
	\end{proposition}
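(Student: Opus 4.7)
The plan is to follow the Rodier-type strategy \cite{rodier1975modele} adapted to covering groups as in \cite{patel2014theorem}: reinterpret the Whittaker dimension as the aggregate coefficient attached to regular nilpotent orbits in the Harish-Chandra germ expansion of $\theta_{\wt{\pi}}$ at the identity, and extract that coefficient from $\theta_{\wt{\pi}}$ by integration against $\psi$ over shrinking pro-$p$ subgroups.

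First I would realize $d_{\wt{\pi}} = \mrdim_{\mbc}\mrhom_{N}(\wt{\pi},\psi)$ as the stable trace of an explicit family of projectors. Choose a decreasing system of open compact subgroups $K_{m} \subset G$ with Iwahori-type factorization $K_{m} = N_{m}^{-}\cdot T_{m}\cdot N_{m}$ such that $\psi$ extends to a character $\psi_{m}$ of $K_{m}$, non-trivial on $N_{m}$ and trivial on $N_{m}^{-}$ and $T_{m}$. For $m$ large $K_{m}$ is pro-$p$, so the splitting $\bs{s}$ lifts it canonically to $\wt{G}$; form the projector
\[
e_{m} = \frac{1}{\operatorname{vol}(K_{m})}\int_{K_{m}}\psi_{m}(k)^{-1}\bs{s}(k)\,dk
\]
in the Hecke algebra of $\wt{G}$. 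A standard Iwahori-factorization argument, together with exactness of the Whittaker functor (noted in the paragraph just before the statement), yields $\mrtr(\wt{\pi}(e_{m})) = d_{\wt{\pi}}$ for every $m$ sufficiently large.

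Second, I would evaluate the same trace analytically via the character:
\[
\mrtr(\wt{\pi}(e_{m})) = \int_{K_{m}}\psi_{m}(k)^{-1}\,\theta_{\wt{\pi}}(\bs{s}(k))\,dk.
\]
Pulling back through $\mrexp$ to a small neighborhood of $0$ in $\mfg$ and substituting the germ expansion of $\theta_{\wt{\pi}}$ at $I_{r}$ from \S \ref{subsectionHCgerm}, this integral becomes a finite linear combination of Fourier-type pairings
\[
c_{\wt{\pi},\mco}(I_{r})\int \psi_{m}(\mrexp(X))^{-1}\,\hat{\jmath}(X,\mco)\,dX, \qquad \mco\in\mrnil(\mfg).
\]
Rodier's scaling argument then identifies which orbits survive: because $\hat{\jmath}(\cdot,\mco)$ is homogeneous of degree $-\dim\mco/2$ while the character $\psi_{m}\circ\mrexp$ rescales with controlled exponents tracking the size of $K_{m}$, only the regular nilpotent orbits contribute a nonzero limit, and each contributes the same normalized constant. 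Averaging over $\mrnil_{\mrreg}(\mfg)$ recovers $c_{\wt{\pi}}(I_{r})$, and comparison with the first expression for $\mrtr(\wt{\pi}(e_{m}))$ yields the claim.

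The main obstacle is the last step, namely Rodier's matching and vanishing computation in the covering setting. Two compatibilities need to be verified: the canonical splitting $\bs{s}$ restricted to the unipotent part of $K_{m}$ must agree with the splitting on $N$ used to define $\mrwh(\wt{\pi})$, so the projector $e_{m}$ genuinely computes the Whittaker dimension; and $I_{r}$ must be \emph{good} in the sense of \S \ref{subsectionHCgerm}, which is immediate because it is central so that the germ expansion of \cite{li2012formule} holds in the clean form used above. Once these are in place the scaling analysis of Rodier transfers verbatim from the linear case, as carried out in detail in \cite{patel2014theorem}.
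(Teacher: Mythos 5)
Your proposal is a correct reconstruction of the Rodier-type argument, and it matches the paper's approach exactly: the paper offers no independent proof but simply invokes \cite{patel2014theorem}*{Theorem 2}, whose proof is precisely the projector/germ-expansion/scaling scheme you outline. The one point worth noting is that Rodier's method isolates the coefficient of the single regular nilpotent orbit attached to $\psi$ rather than an average over $\mrnil_{\mrreg}(\mfg)$, but since the Lie algebra of $G_{\beta}$ has a unique regular nilpotent orbit this coincides with $c_{\wt{\pi}}(I_{r})$ as defined in \S\ref{subsectionHCgerm}.
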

	
	Let $\wt{\pi}_{i}\in\mrirr_{\epsilon}(\wt{G_{r_{i}}})$ for each $i=1,\dots,k$ and let $\wt{\omega}$ be a genuine character of $Z(\wt{G_{r}})$ satisfying \eqref{eqomegacompatible}.
	
	\begin{proposition} \label{propwhitdimBZprod}
		We have
		$ d_{(\wt{\pi}_{1}\wt{\times}\dots\wt{\times}\wt{\pi}_{k})_{\wt{\omega}}}=d_{(\wt{\pi}_{1}\wt{\otimes}\dots\wt{\otimes}\wt{\pi}_{k})_{\wt{\omega}}}=\dF{d_{r}}^{-1}\cdot\prod_{i=1}^{k}\dF{d_{r_{i}}}\cdot d_{\wt{\pi}_{i}}$.
		
	\end{proposition}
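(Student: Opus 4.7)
The plan is to split the equality into two parts and handle each separately. For the first equality $d_{(\wt{\pi}_{1}\wt{\times}\dots\wt{\times}\wt{\pi}_{k})_{\wt{\omega}}}=d_{(\wt{\pi}_{1}\wt{\otimes}\dots\wt{\otimes}\wt{\pi}_{k})_{\wt{\omega}}}$, I would invoke the realization of the Whittaker functor as the top Bernstein--Zelevinsky derivative alluded to in the text. The key input is that on $\mrrep_\epsilon(\wt{G_r})$, taking the top derivative of a parabolic induction $i_{\beta,(r)}(\wt{\tau})$ recovers the partial (top) derivatives of $\wt{\tau}$ on each block, which in turn computes $\mrwh$ on $\wt{G_\beta}$. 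This is exactly the geometric lemma / Leibniz calculation of Bernstein--Zelevinsky, whose proof carries over to the covering setting once one knows the splitting $\bs{s}_{N_{\beta,(r)}}$ is $P_{\beta,(r)}$-equivariant. Concretely, $\mrwh \circ i_{\beta,(r)} = \mrwh_{\wt{G_\beta}}$ at the level of functors, which gives the first identity on dimensions.

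For the second equality, the main ingredient is Corollary~\ref{corpirest}, which provides
$$\prod_{i=1}^{k}\dF{d_{r_{i}}}\cdot(\wt{\pi}_{1}^{(n)}\wt{\otimes}\dots\wt{\otimes}\wt{\pi}_{k}^{(n)}) \simeq \dF{d_{r}}\cdot(\wt{\pi}_{1}\wt{\otimes}\dots\wt{\otimes}\wt{\pi}_{k})_{\wt{\omega}}^{(n)}$$
as representations of the block-compatible group $\wt{G_{\beta,\beta}^{(n)}}$. Since $\mrwh$ is exact, I apply $\mrwh$ (more precisely, $\mrdim_{\mbc}\mrwh$) to both sides. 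Two reductions then trivialize the computation. First, because the unipotent radical $N$ of the Borel of $G_\beta$ lies inside $G_{\beta,\beta}^{(n)}$, for any genuine finite length representation $\wt{\sigma}$ of $\wt{G_\beta}$ we have $d_{\wt{\sigma}^{(n)}}=d_{\wt{\sigma}}$; in particular both $d_{\wt{\pi}_i^{(n)}}=d_{\wt{\pi}_i}$ and $d_{(\wt{\pi}_1\wt\otimes\cdots\wt\otimes\wt{\pi}_k)_{\wt{\omega}}^{(n)}}=d_{(\wt{\pi}_1\wt\otimes\cdots\wt\otimes\wt{\pi}_k)_{\wt{\omega}}}$. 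Second, because $\wt{G_{\beta,\beta}^{(n)}}$ is block compatible, the unipotent $N$ factors as $N_{r_1}\times\cdots\times N_{r_k}$ with a compatible generic character, so $\mrwh$ of the (ordinary) tensor product $\wt{\pi}_{1}^{(n)}\wt{\otimes}\dots\wt{\otimes}\wt{\pi}_{k}^{(n)}$ is the tensor product of the individual Whittaker spaces, giving $d_{\wt{\pi}_{1}^{(n)}\wt{\otimes}\dots\wt{\otimes}\wt{\pi}_{k}^{(n)}}=\prod_i d_{\wt{\pi}_i^{(n)}}=\prod_i d_{\wt{\pi}_i}$.

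Combining these two observations with Corollary~\ref{corpirest} yields the numerical identity
$$\prod_{i=1}^{k}\dF{d_{r_i}}\cdot \prod_{i=1}^{k} d_{\wt{\pi}_i} \;=\; \dF{d_{r}}\cdot d_{(\wt{\pi}_{1}\wt{\otimes}\dots\wt{\otimes}\wt{\pi}_{k})_{\wt{\omega}}},$$
which rearranges to the desired formula. The only step that requires actual care is the first one: making sure that the top Bernstein--Zelevinsky derivative in the covering setting satisfies the same Leibniz-type factorization as in the linear case, and that the block-compatible tensor decomposition of $\mrwh$ on $\wt{G_{\beta,\beta}^{(n)}}$ is genuinely multiplicative (no additional scalar from the central quotient $\Xi$ in Theorem~\ref{thmMTP}); both are straightforward but must be checked to guarantee that no stray factor has been absorbed. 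Everything else is formal consequence of exactness of $\mrwh$ and Corollary~\ref{corpirest}.
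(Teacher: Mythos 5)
Your proposal is correct and follows essentially the same route as the paper: the paper handles the first equality by citing the heredity theorem of Banks (whose proof is exactly the Bernstein--Zelevinsky top-derivative computation you sketch, and which the paper itself alludes to just before the proposition), and the second equality by combining Corollary~\ref{corpirest} with the observation that $N\subset G_{\beta,\beta}^{(n)}$, precisely as you do.
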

	
	\begin{proof}
		
		The first equation follows from \cite{banks1998heredity}*{Theorem}. The second equation follows from Corollary \ref{corpirest} and the fact that the unipotent radical $N$ is contained in $G_{\beta,\beta}^{(n)}$.
		
	\end{proof}
	
	Here, $d_{r}=\mrgcd(n,2rc-r+1)$ is defined as in \S \ref{subsectionMTP}.
	
	Let $\wt{\omega}$ and $\omega$ be characters satisfying \eqref{eqomegawtomega}. Let $\wt{\pi}\in\mrsqrt_{\wt{\omega}}(\wt{G})$ and let $\pi\in\mrsqrt_{\omega}(G)$ be the metaplectic lift of $\wt{\pi}$. We study the relation between the Harish-Chandra germ functions $c_{\wt{\pi}}$ and $c_{\pi}$.
	
	\begin{proposition}\label{propcwtpicpi}
		
		Let $x$ be a semi-simple element in $G$ such that $x^{n}$ is semi-simple and the centralizers $G_{x}$ and $G_{x^{n}}$ are equal. Let $T$ be a maximal split torus contained in $G_{x}$ that contains $x$. Then
		$$\Delta^{G}(x^{n})\cdot\car{W(G_{x},T)}\cdot c_{\wt{\pi}}(x^{*})=\frac{1}{\dF{d_{r}}\abs{n}^{r/2}}\sum_{\{t\in T\mid t^{n}=x^{n}\}}\Delta^{G}(t)\cdot\car{W(G_{t},T)}\cdot c_{\pi}(t)\cdot\epsilon(x^{*}/t^{*}).$$
		
	\end{proposition}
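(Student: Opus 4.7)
The strategy is to apply the germ-function limit formula \eqref{eqcpilimthetacover} to $c_{\wt\pi}(x^*)$, substitute the character identity \eqref{eqmccorr} on a Zariski-dense locus of $\wt T$, and then recognise the resulting terms via the germ-function limit formula \eqref{eqcpilimtheta} applied to $\pi$ at each $t$ with $t^n=x^n$.

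First I would dispose of the vanishing case. If $G_x$ is not quasi-split then the left-hand side is zero by \eqref{eqcpilimthetacover}. Under the hypothesis $G_x=G_{x^n}$, for every $t\in T$ with $t^n=x^n$ we have $T\subseteq G_t\subseteq G_{t^n}=G_x$, and $T$ remains a maximal split torus of $G_t$ since it is already maximal split in the larger $G_x$. From this one argues that $G_t$ is quasi-split iff $G_x$ is, so in the non quasi-split case each $c_\pi(t)=0$ by \eqref{eqcpilimtheta} and the right-hand side also vanishes. So we may assume $G_x$ quasi-split.

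Second, \eqref{eqcpilimthetacover} reads
$\car{W(G_x,T)}\Delta^G(x)c_{\wt\pi}(x^*)=\lim_{x'^*\to x^*}\Delta^G(x')\theta_{\wt\pi}(x'^*)$,
with $x'^*\in\wt T$ good, semi-simple regular; I would further restrict $x'$ so that $(x')^n$ is also $G$-regular (a Zariski-dense condition on $T$). For such $x'$, \eqref{eqmccorr} gives
$\Delta^G((x')^n)\theta_{\wt\pi}(x'^*)=\frac{1}{\dF{d_r}\abs{n}^{r/2}}\sum_{t'}\Delta^G(t')\theta_\pi(t')\epsilon(x'^*/t'^*)$,
and the sum may be taken over $t'\in T$ only, because $(x')^n$ being $G$-regular forces any $t'$ with $(t')^n=(x')^n$ to lie in the unique maximal torus through $(x')^n$. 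As $x'\to x$ the finite set $\{t'\in T:(t')^n=(x')^n\}$ deforms continuously, each $t'$ tending to a unique $t\in T$ with $t^n=x^n$, and by continuity of the cocycle $\epsilon(x'^*/t'(x')^*)\to\epsilon(x^*/t^*)$. Splitting the sum by branch $t$ and applying \eqref{eqcpilimtheta} to $\pi$ at $t$ (using that $T$ is still a maximal split torus of $G_t$) gives $\Delta^G(t'(x'))\theta_\pi(t'(x'))\to\car{W(G_t,T)}\Delta^G(t)c_\pi(t)$.

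Third, combining everything reduces the proposition to the discriminant limit identity
$\lim_{x'\to x}\Delta^G(x')/\Delta^G((x')^n)=\Delta^G(x)/\Delta^G(x^n)$
up to an appropriate power of $\abs{n}$ that is absorbed by the $\abs{n}^{r/2}$ normalization of \eqref{eqmccorr}. This is evaluated using the explicit product definition of $\Delta^{G_\beta}$ over eigenvalue pairs: under $G_x=G_{x^n}$, pairs $(j,l)$ with $\alpha_j=\alpha_l$ coincide with those with $\alpha_j^n=\alpha_l^n$, so the ratio factors into a ``non-collapsing'' part converging to $\Delta^G(x)/\Delta^G(x^n)$ and a ``collapsing'' part, handled via the asymptotics $\alpha^n-\beta^n\sim n\alpha^{n-1}(\alpha-\beta)$ together with the identity $\prod_{k=1}^{n-1}(1-\zeta_n^k)=n$. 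After multiplying through by $\Delta^G(x^n)/\Delta^G(x)$ and rearranging, the stated identity emerges.

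\textbf{Main obstacle.} The principal technical difficulty is precisely the bookkeeping in the last step: one must track how the powers of $\abs{n}$ contributed by the collapsing pairs combine with $\abs{n}^{r/2}$ to yield exactly the normalisation in the proposition, with no residual power of $\abs{n}$. The full strength of the hypothesis $G_x=G_{x^n}$ is needed here, both to match collapsing pairs on the $x$- and $x^n$-sides and to control which $G_t$'s are quasi-split. A secondary subtlety is ensuring that the set of $x'^*\in\wt T$ which are simultaneously good, semi-simple regular with $(x')^n$ also $G$-regular is dense enough for the limit argument to go through.
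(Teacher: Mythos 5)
Your first two steps (reduction to the quasi-split case, restriction of the sum in \eqref{eqmccorr} to $t'\in T$ because $x'^{n}$ is regular, branch-wise limits $t'\to t$ with continuity of $\epsilon(x'^{*}/t'^{*})$, and the termwise application of \eqref{eqcpilimtheta}) are exactly the paper's argument, just spelled out in more detail. The problem is your third step, which should not exist: it stems from a misreading of \eqref{eqcpilimthetacover}. In that formula the Weyl discriminant on both sides is evaluated at the \emph{projection} $\bs{p}(\cdot)$ of the covering-group element to $G$; since $x'^{*}=u(x')\bs{s}(x')^{n}$ one has $\bs{p}(x'^{*})=x'^{n}$, so the limit formula reads
\begin{equation*}
\Delta^{G}(x^{n})\cdot\car{W(G_{x^{n}},T)}\cdot c_{\wt{\pi}}(x^{*})=\lim_{x'\to x}\Delta^{G}(x'^{n})\,\theta_{\wt{\pi}}(x'^{*}),
\end{equation*}
with $\Delta^{G}(x'^{n})$, not $\Delta^{G}(x')$, on the right, and with $G_{x^{n}}=G_{x}$ by hypothesis (this is one of the places that hypothesis is used). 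The right-hand side is then precisely the left-hand side of \eqref{eqmccorr}, so substituting and passing to the limit termwise finishes the proof; no discriminant ratio ever appears.

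Worse, the ratio identity you propose is false in general. For a pair of eigenvalues of $x$ that collapse (i.e.\ $\alpha_{j}=\alpha_{l}$, hence also $\alpha_{j}^{n}=\alpha_{l}^{n}$, these two conditions agreeing by $G_{x}=G_{x^{n}}$), the asymptotics $\abs{\alpha_{j}'^{n}-\alpha_{l}'^{n}}\sim\abs{n}\,\abs{\alpha_{j}}^{n-1}\abs{\alpha_{j}'-\alpha_{l}'}$ show that each such pair contributes a factor $\abs{n}^{-1}$ to $\lim_{x'\to x}\Delta^{G}(x')/\Delta^{G}(x'^{n})$ beyond $\Delta^{G}(x)/\Delta^{G}(x^{n})$, so the limit equals $\abs{n}^{-N}\Delta^{G}(x)/\Delta^{G}(x^{n})$ with $N$ the number of collapsing pairs. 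This residual power of $\abs{n}$ --- exactly the ``main obstacle'' you flag --- does not cancel against $\abs{n}^{r/2}$ and would leave your final formula off by $\abs{n}^{-N}$ from the one stated (it is invisible only when $x$ is regular or $\mrgcd(n,p)=1$). The obstacle is not a bookkeeping difficulty to be overcome; it is the symptom of having inserted $\Delta^{G}(x')$ where \eqref{eqcpilimthetacover} has $\Delta^{G}(\bs{p}(x'^{*}))=\Delta^{G}(x'^{n})$. Once that is corrected, your steps one and two already constitute a complete proof along the paper's lines.
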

	
	\begin{proof}
		
		We choose $x'$ to be an element in $T$ sufficiently close to $x$, such that $x'^{n}$ is semi-simple regular. Using \eqref{eqmccorr} we have
		$$\Delta^{G}(x'^{n})\cdot \theta_{\wt{\pi}}(x'^{*})=\frac{1}{\dF{d_{r}}\abs{n}^{r/2}}\sum_{\{t'\in T\mid t'^{n}=x'^{n}\}}\Delta^{G}(t')\cdot \theta_{\pi}(t')\cdot\epsilon(x'^{*}/t'^{*}).$$
		Taking the limit $x'\rightarrow x$ and using \eqref{eqcpilimtheta} and \eqref{eqcpilimthetacover}, we finish the proof.
		
	\end{proof}
	
	\begin{remark}
		
		In Proposition \ref{propcwtpicpi} the assumption $G_{x}=G_{x^{n}}$ is important, which guarantees that $T$ is a maximal split torus of each $G_{t}$ and $G_{x^{n}}$. We don't know what happens if this condition is not satisfied.
		
	\end{remark}
	
	Taking $x=I_{r}$ and using Proposition \ref{propwhitdim} and Proposition \ref{propcwtpicpi}, we have the following important corollary.
	
	\begin{corollary}
		We have
		\begin{equation}\label{eqwhitdim}
			d_{\wt{\pi}}=\frac{1}{\dF{d_{r}}\abs{n}^{r/2}r!}\sum_{t}\Delta^{G}(t)\cdot\car{W(G_{t},T)}\cdot c_{\pi}(t),	
		\end{equation}
		where $t$ in the sum ranges over $\mrdiag(\zeta_{1},\dots,\zeta_{r})$ with $\zeta_{1},\dots,\zeta_{r}\in\mu_{n}$, and $T$ denotes the torus of diagonal matrices.
		
	\end{corollary}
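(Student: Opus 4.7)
The plan is to specialize Proposition \ref{propcwtpicpi} at $x = I_r$ and combine it with Proposition \ref{propwhitdim}, which identifies $c_{\wt{\pi}}$ at the identity with the Whittaker dimension $d_{\wt{\pi}}$. I would first check that $x = I_r$ satisfies the hypotheses of Proposition \ref{propcwtpicpi}: both $x$ and $x^n = I_r$ have centralizer equal to all of $G$, so the condition $G_x = G_{x^n}$ is automatic, and the diagonal torus $T$ is a maximal split torus of $G$ containing $x$.

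Second, I would evaluate the left-hand side. Clearly $\Delta^G(I_r) = 1$ and $\car{W(G_{I_r}, T)} = \car{W(G, T)} = r!$. Under the standard normalization in which $\bs{s}(I_r) = 1$ and $u(I_r) = 1$, the star map sends $I_r$ to the identity of $\wt{G_r}$, so $c_{\wt{\pi}}(I_r^*) = c_{\wt{\pi}}(1) = d_{\wt{\pi}}$ by Proposition \ref{propwhitdim}. Hence the left side equals $r! \cdot d_{\wt{\pi}}$.

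Third, on the right-hand side, the condition $t \in T$ with $t^n = I_r$ singles out precisely the diagonal matrices $t = \mrdiag(\zeta_1, \dots, \zeta_r)$ with $\zeta_i \in \mu_n$, matching the index set in the corollary. To finish I would need to show that the factor $\epsilon(I_r^*/t^*) = \epsilon(t^*)^{-1}$ equals $1$ for every such $t$; equivalently, that $t^* = u(t)\bs{s}(t)^n = 1$ whenever $t^n = I_r$. This reduces to a direct computation with the Kazhdan-Patterson cocycle \eqref{eqBLS} and the definition of $u(t)$ from \cite{flicker1986metaplectic}*{\S 4}: the power $\bs{s}(t)^n$ lands in $\mu_n$ (since $\bs{p}(\bs{s}(t)^n) = t^n = I_r$), and the sign $u(t)$ is introduced precisely to cancel it. Granting this, dividing both sides by $r!$ gives the stated identity.

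The main obstacle is this last step---the triviality of $t^*$ on $\mu_n$-torsion diagonal elements. The remainder of the argument is essentially a specialization of Proposition \ref{propcwtpicpi}. The sign analysis with Hilbert symbols, in particular the symbol $(\zeta,-1)_n$ when $n$ is even, is where one must be careful, but it is internal to the Flicker-Kazhdan formalism and does not involve any new input beyond the star map itself.
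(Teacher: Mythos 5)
Your proposal is correct and takes essentially the same route as the paper, whose proof is literally the one-line specialization of Proposition \ref{propcwtpicpi} at $x=I_{r}$ combined with Proposition \ref{propwhitdim}. The point you rightly flag --- that $\epsilon(I_{r}^{*}/t^{*})=1$, i.e.\ $t^{*}=1$ for diagonal $n$-torsion $t$ --- is left implicit in the paper, but it is the same fact the paper invokes elsewhere (e.g.\ $\epsilon(x_{i}^{*}/t_{i}^{*})=1$ when $x_{i}=t_{i}\zeta_{i}$ in the proof of Lemma \ref{lemmaMCnulllift}), and your reduction of it to the Flicker--Kazhdan definition of $u$ together with the block-multiplicativity $t^{*}=t_{1}^{*}\cdots t_{k}^{*}$ is the right way to check it.
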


	\subsection{The main Whittaker dimensional formula for essentially square integrable representations}
	
	In this subsection we focus on the case where $G=G_{r}$. We let $T$ be the diagonal torus of $G$. In general we let $T_{r'}$ be the diagonal torus of $G_{r'}$ for any $r'\geq 1$.
	
	First we propose the following conjecture related to a so-called ``Zelevinsky standard module". Assume $r=r_{0}m$ for positive integers $r_{0}$ and $m$. For $\rho\in\mrcusp(G_{r_{0}})$ and two real numbers $a,b$ such that $b-a+1=m$, we define
	$Z(\rho,[a,b])$ to be the unique irreducible subrepresentation of 
	$$\rho\nu^{a}\times\rho\nu^{a+1}\times\dots\times\rho\nu^{b}.$$
	
	\begin{conjecture}\label{conjmain}
		
		For $x=\mrdiag(\zeta_{1},\dots,\zeta_{r})$ with $\zeta_{i}\in\mu_{n}$, we have
		$$c_{Z(\rho,[a,b])}(x)=\begin{cases} \prod_{i=1}^{m}\omega_{\rho}(\zeta_{i}')\quad&\text{if}\ x\ \text{is}\ G\text{-conjugate to}\ \mrdiag(\zeta_{1}'I_{r_{0}},\dots, \zeta_{m}'I_{r_{0}}),\\ & \zeta_{i}'\in \mu_{n}\ \text{pairwise different}\ \text{for}\ i=1,\dots,m; \\
			0\quad &\text{otherwise}.
		\end{cases}$$
		
	\end{conjecture}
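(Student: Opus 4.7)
The plan is to prove the conjecture by induction on $m$, combining the limit formula \eqref{eqcpilimtheta} with a Casselman-type identity that relates the germ at a non-regular semi-simple point to a germ of its Jacquet module. Write $x = \mrdiag(\zeta_1, \ldots, \zeta_r)$ with $\zeta_i \in \mu_n$, let $L := G_x$ be its centralizer (a Levi subgroup $\prod_{j} \mrgl_{m_j}(F)$ with $m_j$ the eigenvalue multiplicities), and note that $x$ is central in $L$. Because every eigenvalue of $\mathrm{Ad}(x)$ on the Lie algebra of the unipotent radical of any parabolic $P = LN$ has absolute value one (being a ratio of elements of $\mu_n$), the modular character $\delta_P$ is trivial on $x$. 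This should allow Casselman's formula for the character of parabolic induction to pass to the limit at $x$, yielding the key identity
$$c_{Z(\rho,[a,b])}(x) = c_{r_{G, L}(Z(\rho,[a,b]))}(x).$$

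Second, I would analyze $r_{G, L}(Z(\rho,[a,b]))$ via the geometric lemma and Zelevinsky's classification. Since the cuspidal support of $Z(\rho,[a,b])$ lies on $\mrgl_{r_0}(F)^m$, the Jacquet module vanishes unless every $m_j$ is a positive multiple of $r_0$, say $m_j = k_j r_0$; in that case it is (a direct summand of) a tensor product of smaller Zelevinsky modules $Z(\rho, [a_j, b_j])$ on $\mrgl_{k_j r_0}(F)$, where $[a_1, b_1], \ldots, [a_l, b_l]$ are the consecutive sub-segments of $[a,b]$ of lengths $k_1, \ldots, k_l$. Hence the germ at $x = \mrdiag(\zeta_1 I_{m_1}, \ldots, \zeta_l I_{m_l})$ factors as
$$c_{Z(\rho,[a,b])}(x) = \prod_{j=1}^{l} c_{Z(\rho, [a_j, b_j])}(\zeta_j I_{k_j r_0}).$$

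The base case $m = 1$: $\pi = \rho$ is cuspidal on $\mrgl_{r_0}(F)$. For $x = \mrdiag(\zeta_1, \ldots, \zeta_{r_0})$ with $\zeta_i \in \mu_n$ non-central, $x$ is not elliptic (its eigenvalues lie in $F^\times$), and the character of a cuspidal representation vanishes off the elliptic locus, giving $c_\rho(x) = 0$. For $x = \zeta I_{r_0}$ central, the central-character relation together with Proposition \ref{propwhitdim} yields $c_\rho(\zeta I_{r_0}) = \omega_\rho(\zeta) \cdot d_\rho = \omega_\rho(\zeta)$, since the Whittaker dimension of a cuspidal representation of the linear $\mrgl_{r_0}(F)$ equals $1$. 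In the inductive step with $m \geq 2$, each factor $c_{Z(\rho, [a_j, b_j])}(\zeta_j I_{k_j r_0})$ is $\omega_\rho(\zeta_j)$ when $k_j = 1$ and vanishes when $k_j \geq 2$ by the induction hypothesis (as $\zeta_j I_{k_j r_0}$ has only one distinct eigenvalue, violating the ``pairwise different'' requirement for the smaller Zelevinsky module). Thus the product equals $\prod_i \omega_\rho(\zeta_i)$ precisely when $l = m$ with all $k_j = 1$ and the $\zeta_j$ pairwise distinct, and vanishes otherwise. The degenerate single-block case $x = \zeta I_r$ (so $l = 1$, $k_1 = m$) is handled separately by the non-genericity of the Zelevinsky subrepresentation: $d_{Z(\rho,[a,b])} = 0$ for $m \geq 2$, and hence $c_{Z(\rho,[a,b])}(\zeta I_r) = \omega_\pi(\zeta) \cdot d_{Z(\rho,[a,b])} = 0$.

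The main obstacle is justifying the Casselman-type identity $c_\pi(x) = c_{r_{G, L}(\pi)}(x)$ rigorously when $x$ is central in $L$, i.e., on the boundary of every dominant chamber: while $\delta_P(x) = 1$ makes this plausible, Casselman's original theorem requires strict dominance, and a proof of the boundary case appears to require either a Casselman-Osborne-type germ expansion at non-regular semi-simple points, or an alternative route via Tadic's determinantal formula combined with the van Dijk character formula for parabolic induction. This is presumably why the paper establishes the conjecture in full only in two extreme cases --- $r_0 = 1$, where $Z(\rho,[a,b])$ is itself a one-dimensional character and the conjecture reduces to a direct analysis of the Weyl discriminant $\Delta^G(x')$ (which collapses sufficiently fast at colliding eigenvalues to force $c_\pi(x) = 0$ away from regular points), and $a = b$, the cuspidal case, where Bushnell-Kutzko's simple type theory provides direct access to the cuspidal character on the relevant locus of diagonal matrices with entries in $\mu_n$.
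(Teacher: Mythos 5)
First, a framing point: the statement you set out to prove is presented in the paper as a \emph{conjecture}. The paper does not prove it in general; it only verifies two extreme cases --- $r_{0}=1$, where $Z(\rho,[a,b])$ is a character and the computation is immediate from the Weyl discriminant, and $a=b$ (the cuspidal case, Theorem \ref{thmcalcpi}), the latter only under the hypothesis $\mrgcd(n,p)=1$. Your reduction-by-induction strategy is reasonable in outline, but it has two genuine gaps, only one of which you acknowledge. The acknowledged one is real and serious: the identity $c_{\pi}(x)=c_{r_{G,L}(\pi)}(x)$ for $x$ central in $L=G_{x}$ sits exactly on the wall where Casselman's theorem fails --- all the ratios $\abs{\mrdet(t_{j})}^{1/r_{j}'}/\abs{\mrdet(t_{j+1})}^{1/r_{j+1}'}$ in \eqref{eqcondtidi+1} equal $1$ when the eigenvalues lie in $\mu_{n}$ --- and triviality of $\delta_{P}$ at $x$ does not by itself yield the identity. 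At such points the germ of a parabolically induced representation is a full Weyl-group average (compare Lemma \ref{lemmacBZproduct}), so any correct statement on the wall for an irreducible subquotient must account for contributions from all parabolic subgroups with Levi $L$, not a single Jacquet module; this is a missing idea, not a technicality.

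The second gap is in your base case and is an outright error: it is false that the Harish-Chandra character of a cuspidal representation of $G_{r_{0}}$ vanishes off the elliptic locus. The elements $x'\rightarrow x=\mrdiag(\zeta_{1},\dots,\zeta_{r_{0}})$ entering \eqref{eqcpilimtheta} are split regular elements all of whose eigenvalues have absolute value $1$, so they lie neither in the elliptic locus nor in the asymptotic regime where cuspidality (via Casselman) forces $\theta_{\rho}$ to vanish, and no support theorem applies to them. The vanishing of $c_{\rho}$ at non-central such $x$ is precisely the content of the paper's Theorem \ref{thmcalcpi}, whose proof requires $\mrgcd(n,p)=1$ and goes through Bushnell--Kutzko simple types, the character formula \eqref{eqthetapiTrLambda}, and Green's theorem for $\mrgl_{m}(\mbf_{q^{f}})$: the trace of the inducing cuspidal type vanishes unless the characteristic polynomial of $\overline{y}$ is a power of an irreducible, which by Lemma \ref{lemmacharpoly} forces $\zeta_{1}=\dots=\zeta_{r_{0}}$. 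So your induction silently assumes the hardest ingredient the paper actually establishes, with an invalid one-line justification, and even granting it the argument would prove the conjecture only for $\mrgcd(n,p)=1$ and conditionally on the unproven wall-crossing identity.
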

	
	This conjecture helps us to calculate the Whittaker dimension of a genuine representation of $\wt{G}$. We state the following main theorem.
	
	\begin{theorem}\label{thmwhitcal}
		
		Let $\wt{\pi}\in\mrsqrt_{\epsilon}(\wt{G})$. Let $m$ be the positive integer dividing $r$ and let $\rho\in\mrcusp(G_{r/m})$, such that $\mrmc(\wt{\pi})=L(\rho,[0,m-1])$ (\emph{cf.} Proposition \ref{propmcsqrtclassification}). Let $r_{0}=r/m$ and let $s$ be the order of $\omega_{\rho}\rest_{\mu_{n}}$ that divides $m$. When $\mrgcd(n,p)=1$, we have
		$$d_{\wt{\pi}}=\frac{1}{d_{r}}\cdot\binom{m/s+n/s-1}{m/s}.$$ 
		
	\end{theorem}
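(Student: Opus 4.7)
The plan is to combine the Whittaker dimension formula of Corollary \eqref{eqwhitdim} with Tadi\'c's determinantal formula and Conjecture \ref{conjmain}. Set $\pi := \mrmc(\wt{\pi}) = L(\rho, [0, m-1])$. Under the hypothesis $\mrgcd(n, p) = 1$, every element of $\mu_n$ lies in $\mfo_{F}^{\times}$ and any two distinct elements of $\mu_n$ have unit difference: indeed their reductions modulo $\mfp_F$ are distinct roots of the separable polynomial $X^n - 1$, hence the difference is a unit. Consequently $\Delta^{G}(t) = 1$ for every diagonal $t$ with $\mu_n$-entries, and $\car{n} = 1$, $\dF{d_r} = d_r$; Corollary \eqref{eqwhitdim} simplifies to
\[
d_{\wt{\pi}} = \frac{1}{d_r \cdot r!}\sum_{t} \car{W(G_t, T)} \cdot c_{\pi}(t),
\]
reducing the task to computing $c_\pi(t)$ at such $t$.

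The key intermediate claim is: if the diagonal of $t$ has multiplicity $a_\eta$ for each $\eta \in \mu_n$, then $c_\pi(t) = \prod_{\eta} \omega_\rho(\eta)^{a_\eta/r_0}$ when $r_0$ divides every $a_\eta$, and $c_\pi(t) = 0$ otherwise. To establish this I invoke Tadi\'c's determinantal formula, which writes $L(\rho, [0, m-1])$ in the Grothendieck group as the alternating sum
\[
L(\rho, [0, m-1]) = \sum_{k=1}^{m}(-1)^{m-k}\sum_{0 = j_0 < \cdots < j_k = m}\prod_{i=1}^{k}Z(\rho, [j_{i-1}, j_i - 1]),
\]
and compute the germ of each product term. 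For each product, van Dijk's character formula for parabolic induction at a nearby regular semisimple $t' \to t$ followed by the limit \eqref{eqcpilimtheta} expresses the germ as a Weyl-sum of conjugates carrying $t$ into the block-diagonal Levi $G_{r_0(j_1 - j_0)} \times \cdots \times G_{r_0(j_k - j_{k-1})}$, with each contribution factoring as $\prod_{i} c_{Z(\rho, [j_{i-1}, j_i - 1])}(t^{(i)})$; Conjecture \ref{conjmain} pins each factor down to $\prod_{\eta \in S_i}\omega_\rho(\eta)$ for a $\mu_n$-subset $S_i$ of size $j_i - j_{i-1}$, or to zero. Bookkeeping of Weyl-multiplicities against the multinomial counts of the decompositions then collapses the alternating Tadi\'c sum to the claimed closed form.

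Once the formula for $c_\pi(t)$ is established, the remainder is direct. For each multiset $(a_\eta) = (r_0 b_\eta)_\eta$ with $\sum_\eta b_\eta = m$, the number of diagonal $t$ with this multiplicity pattern is $r!/\prod_\eta a_\eta!$, while $\car{W(G_t, T)} = \prod_\eta a_\eta!$; the contributions therefore multiply out to $r!\cdot \prod_\eta \omega_\rho(\eta)^{b_\eta}$ per multiset, giving
\[
d_{\wt{\pi}} = \frac{1}{d_r}\sum_{\substack{(b_\eta)_{\eta \in \mu_n} \\ \sum_{\eta} b_\eta = m}}\prod_{\eta \in \mu_n}\omega_\rho(\eta)^{b_\eta}.
\]
The generating-function identity $\prod_{\eta \in \mu_n}(1 - \omega_\rho(\eta) X)^{-1} = (1 - X^s)^{-n/s}$ (exploiting that $s$ is the order of $\omega_\rho\rest_{\mu_n}$ and that $s \mid m$) extracts the coefficient $\binom{m/s + n/s - 1}{m/s}$ of $X^m$, yielding the theorem. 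The hard part will be the closed-form expression for $c_\pi(t)$: one needs to pass rigorously from van Dijk's formula at regular semisimple $t'$ to germ values at degenerate $t$ via the limit \eqref{eqcpilimtheta}, and the alternating Tadi\'c sum only telescopes cleanly because Conjecture \ref{conjmain} enforces strong rigidity on which $t^{(i)}$ produce nonzero contributions and with what character values.
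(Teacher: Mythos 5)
Your proposal follows the same overall strategy as the paper's proof: reduce to Corollary \eqref{eqwhitdim} using $\mrgcd(n,p)=1$ (so that $\Delta^{G}(t)=1$ on $\mu_{n}$-diagonal elements, $\abs{n}=1$ and $\dF{d_{r}}=d_{r}$), expand $L(\rho,[0,m-1])$ by Tadi\'c's determinantal formula, evaluate the germs of the product terms by van Dijk's formula followed by the limit \eqref{eqcpilimtheta} (this is exactly the paper's Lemma \ref{lemmacBZproduct}), feed in Conjecture \ref{conjmain}, and finish with the generating function $(1-X^{s})^{-n/s}$. The one real difference is the order of summation, and it matters. The paper never isolates $c_{\pi}(t)$ pointwise: for each composition $(m_{1},\dots,m_{l})$ it sums over all $\mu_{n}$-diagonal $t$ first, so the contribution factors over the blocks and each factor reduces to the elementary identity $\frac{1}{m_{i}!}\sum_{\zeta_{i1},\dots,\zeta_{im_{i}}\ \text{pairwise distinct}}\prod_{j}\omega_{\rho}(\zeta_{ij})=(-1)^{(s+1)m_{i}/s}\binom{n/s}{m_{i}/s}$ (zero unless $s\mid m_{i}$). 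You instead fix $t$ with multiplicity pattern $(a_{\eta})$ and assert that the alternating sum collapses to the pointwise formula $c_{\pi}(t)=\prod_{\eta}\omega_{\rho}(\eta)^{a_{\eta}/r_{0}}$ (zero unless $r_{0}\mid a_{\eta}$ for all $\eta$). That formula is consistent with the final answer and checks in low-rank examples, and your subsequent assembly (the count $r!/\prod_{\eta}a_{\eta}!$ of diagonal elements per pattern, $\car{W(G_{t},T)}=\prod_{\eta}a_{\eta}!$, and the coefficient extraction from $\prod_{\eta\in\mu_{n}}(1-\omega_{\rho}(\eta)X)^{-1}=(1-X^{s})^{-n/s}$) is correct. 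But the ``bookkeeping'' you defer is precisely the nontrivial combinatorial content of the proof: since every nonvanishing term carries the same character value $\prod_{\eta}\omega_{\rho}(\eta)^{a_{\eta}/r_{0}}$, your claim amounts to the identity
\begin{equation*}
\sum_{m_{1}+\dots+m_{l}=m}(-1)^{m-l}\prod_{i=1}^{l}\frac{(r_{0}!)^{m_{i}}}{(r_{0}m_{i})!}\,N(m_{1},\dots,m_{l};t)=\prod_{\eta}a_{\eta}!,
\end{equation*}
where $N$ counts the Weyl elements splitting $t$ into consecutive blocks each consisting of $m_{i}$ distinct eigenvalues of multiplicity exactly $r_{0}$. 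This is harder than the paper's version of the combinatorics because, with $t$ fixed, the inclusion--exclusion does not decouple into independent one-variable generating functions. Either supply a proof of this identity or swap the order of summation as the paper does, where the decoupling is automatic.
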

	
	\begin{proof} 
		
		\textbf{We prove Theorem \ref{thmwhitcal} on assuming Conjecture \ref{conjmain}.} First we prove a general lemma concerning the Harish-Chandra germ function. For a finite length representation $\pi$ of $G$ having irreducible subquotients $\pi_{1},\dots,\pi_{l}$ (counting the multiplicity), we define $\theta_{\pi}:=\sum_{i=1}^{l}\theta_{\pi_{i}}$ and $c_{\pi}:=\sum_{i=1}^{l}c_{\pi_{i}}$.
		
		\begin{lemma}\label{lemmacBZproduct}
			
			Let $\beta=(r_{1},\dots,r_{k})$ be a composition of $r$, let $\pi_{i}\in\mrirr(G_{r_{i}})$, let $T$ be the diagonal torus of $G$ and $G_{\beta}$, and let $x=\mrdiag(\zeta_{1},\dots,\zeta_{r})\in T$ such that $\zeta_{i}\in F^{\times}$. Then $$\car{W(G_{x},T)}\cdot c_{\pi_{1}\times\dots\times\pi_{k}}(x)=\frac{1}{r_{1}!\dots r_{k}!}\sum_{w\in W(G,T)}\prod_{i=1}^{k}\car{W((G_{r_{i}})_{x_{w,i}},T_{r_{i}})}\cdot c_{\pi_{i}}(x_{w,i}),$$
			where for $x\in T$ and $w\in W(G,T)$ we write $w^{-1}xw=\mrdiag(x_{w,1},\dots, x_{w,k})$ with $x_{w,i}\in G_{r_{i}}$, and $(G_{r_{i}})_{x_{w,i}}$ denotes the centralizer of $x_{w,i}$ in $G_{r_{i}}$.
			
		\end{lemma}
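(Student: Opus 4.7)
The plan is to start from van Dijk's character formula for the parabolic induction $\pi_{1}\times\cdots\times\pi_{k}$ evaluated at a regular semisimple element $x'\in T$ close to $x$, then pass to the limit $x'\to x$ via \eqref{eqcpilimtheta}. For a parabolic subgroup $P$ of $G$ with Levi $G_{\beta}$, van Dijk's formula, already recalled in the proof of Proposition~\ref{propmcindres}, reads
\[
\Delta^{G}(x')\cdot\theta_{\pi_{1}\times\cdots\times\pi_{k}}(x')=\sum_{\{g\in G/P\,\mid\, g^{-1}x'g\in G_{\beta}\}}\Delta^{G_{\beta}}(g^{-1}x'g)\cdot\theta_{\pi_{1}\otimes\cdots\otimes\pi_{k}}(g^{-1}x'g).
\]

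First I would reindex the sum: the regularity of $x'$ forces $g\in N_{G}(T)\cdot G_{\beta}$ whenever $g^{-1}x'g\in G_{\beta}$, which identifies $\{g\in G/P : g^{-1}x'g\in G_{\beta}\}$ with the coset space $W(G,T)/W(G_{\beta},T)$. Since $\Delta^{G_{\beta}}$ and $\theta_{\pi_{1}\otimes\cdots\otimes\pi_{k}}$ are class functions on $G_{\beta}$, I can symmetrize the identity to
\[
\Delta^{G}(x')\cdot\theta_{\pi_{1}\times\cdots\times\pi_{k}}(x')=\frac{1}{\car{W(G_{\beta},T)}}\sum_{w\in W(G,T)}\Delta^{G_{\beta}}(w^{-1}x'w)\cdot\theta_{\pi_{1}\otimes\cdots\otimes\pi_{k}}(w^{-1}x'w),
\]
and block-decomposing $w^{-1}x'w=\mrdiag(x'_{w,1},\ldots,x'_{w,k})$ each summand becomes $\prod_{i}\Delta^{G_{r_{i}}}(x'_{w,i})\theta_{\pi_{i}}(x'_{w,i})$.

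Next I would pass to the limit $x'\to x$ inside $T$. By \eqref{eqcpilimtheta} the left-hand side tends to $\car{W(G_{x},T)}\Delta^{G}(x)c_{\pi_{1}\times\cdots\times\pi_{k}}(x)$; this is legitimate because the centralizer in $G=G_{r}$ of any diagonal element is a product of general linear groups, hence quasi-split. Applying \eqref{eqcpilimtheta} in each $G_{r_{i}}$ turns the $i$-th factor on the right into $\car{W((G_{r_{i}})_{x_{w,i}},T_{r_{i}})}\Delta^{G_{r_{i}}}(x_{w,i})c_{\pi_{i}}(x_{w,i})$. I therefore obtain
\[
\car{W(G_{x},T)}\Delta^{G}(x)c_{\pi_{1}\times\cdots\times\pi_{k}}(x)=\frac{1}{\car{W(G_{\beta},T)}}\sum_{w\in W(G,T)}\prod_{i=1}^{k}\car{W((G_{r_{i}})_{x_{w,i}},T_{r_{i}})}\Delta^{G_{r_{i}}}(x_{w,i})c_{\pi_{i}}(x_{w,i}).
\]

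To remove the discriminant factors I would invoke the setting of Theorem~\ref{thmwhitcal} in which this lemma is actually applied, namely $\zeta_{i}\in\mu_{n}$ together with $\mrgcd(n,p)=1$: distinct $n$-th roots of unity then reduce to distinct elements of the residue field, so $\abs{\zeta_{a}-\zeta_{b}}=1$ whenever $\zeta_{a}\neq\zeta_{b}$, while $\abs{\zeta_{i}}=1$. Hence $\Delta^{G}(x)=1$ and $\Delta^{G_{r_{i}}}(x_{w,i})=1$ for every $w$ and every $i$, and using $\car{W(G_{\beta},T)}=r_{1}!\cdots r_{k}!$ the identity collapses to the stated formula. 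The main obstacle lies exactly here: for a general diagonal $x$ the ratio $\Delta^{G}(x)/\prod_{i}\Delta^{G_{r_{i}}}(x_{w,i})$ depends non-trivially on $w$, so the clean form of the statement genuinely uses the root-of-unity hypothesis; a secondary delicate point is the Weyl-coset reindexing in the first step, which depends on the regularity of $x'$ and on the containment $T\subset G_{\beta}$.
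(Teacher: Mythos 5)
Your proof is correct and follows the same route as the paper's: van Dijk's induced-character formula, reindexing the sum over $G/P$ by $W(G,T)/W(G_{\beta},T)$ and symmetrizing, then passing to the limit $x'\to x$ via \eqref{eqcpilimtheta}. Your extra step handling the Weyl discriminants is in fact a necessary correction that the paper's proof glosses over: taking the limit literally leaves $\Delta^{G}(x)$ on the left and $\prod_{i}\Delta^{G_{r_{i}}}(x_{w,i})$ on the right, and these do not cancel for arbitrary $\zeta_{i}\in F^{\times}$ (already for $r=2$, $\beta=(1,1)$, $x=\mrdiag(1,\varpi_{F})$ the stated identity fails), so the clean formula genuinely requires either inserting those factors or restricting to the situation of Theorem \ref{thmwhitcal}, where $\zeta_{i}\in\mu_{n}$ and $\mrgcd(n,p)=1$ make all discriminants equal to $1$, exactly as you observe.
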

		
		\begin{proof}
			
			We consider $x'=\mrdiag(\zeta_{1}',\dots,\zeta_{r}')$ with $\zeta_{i}'\in F^{\times}$ pairwise different for $i=1,\dots,r$. We denote by $W(G,T)$ and $W(G_{\beta},T)$ the corresponding Weyl groups.
			Using \cite{van1972computation}*{Theorem 3} we have
			\begin{align*}
				\Delta^{G_{r}}(x')\theta_{\pi}(x')&=\sum_{\{g\in G_{r}/P_{(r),\beta}\mid g^{-1}x'g\in G_{\beta}\}}\Delta^{G_{\beta}}(g^{-1}x'g)\theta_{\pi_{1}\otimes\dots\otimes\pi_{k}}(g^{-1}x'g)\\
				&=\sum_{w\in W(G, T)/W(G_{\beta},T)}\Delta^{G_{\beta}}(w^{-1}x'w)\theta_{\pi_{1}\otimes\dots\otimes\pi_{k}}(w^{-1}x'w)\\
				&=\car{W(G_{\beta},T)}^{-1}\sum_{w\in W(G,T)}\Delta^{G_{\beta}}(w^{-1}x'w)\theta_{\pi_{1}\otimes\dots\otimes\pi_{k}}(w^{-1}x'w)\\
				&=\frac{1}{r_{1}!\dots r_{k}!}\sum_{w\in W(G,T)}\prod_{i=1}^{k}\Delta^{G_{r_{i}}}(x'_{w,i})\theta_{\pi_{i}}(x'_{w,i})
			\end{align*}
			Taking the limit $x'\rightarrow x$ and using \eqref{eqcpilimtheta} we finish the proof.
			
		\end{proof}

			Now we focus on the proof of Theorem \ref{thmwhitcal}. We write $\pi=\mrmc(\wt{\pi})=L(\rho,[0,m-1])$. Using Tadic's determinantal formula \cite{lapid2014determinantal}*{Theorem 1}, we have
			$$\pi=\sum_{m_{1}+\dots+m_{l}=m}(-1)^{m-l}Z(\rho,[0,t_{1}-1])\times Z(\rho,[t_{1},t_{2}-1])\times\dots\times Z(\rho,[t_{l-1},t_{l}-1]),$$
			where $m_{i}$ in the sum are positive integers, and we define $t_{0}=0$ and $t_{i}=m_{1}+\dots+m_{i}$ for each $i=1,2,\dots,l$, and the equation is taken in the Grothendieck group of finite length representations of $G$. Taking the Harish-Chandra germ function and using Lemma \ref{lemmacBZproduct}, we get
			\begin{align*}&\car{W(G_{x},T)}\cdot c_{\pi}(x)=\car{W(G_{x},T)}\sum_{m_{1}+\dots+m_{l}=m}(-1)^{m-l}c_{(\times_{i=1}^{l}Z(\rho,[t_{i-1},t_{i}-1]))}(x)\\
				&=\sum_{m_{1}+\dots+m_{l}=m}\frac{(-1)^{m-l}}{\prod_{i=1}^{l}(r_{0}m_{i})!}\sum_{w\in W(G,T)}\prod_{i=1}^{l}\car{W((G_{r_{0}m_{i}})_{x_{w,i}},T_{r_{0}m_{i}})}\cdot c_{Z(\rho,[t_{i-1},t_{i}-1])}(x_{w,i}).
			\end{align*}
			Using \eqref{eqwhitdim}, we have
			\begin{align*}
				d_{\wt{\pi}}&=\frac{1}{\dF{d_{r}}\abs{n}^{r/2}r!}\sum_{x}\Delta^{G}(x)\cdot\car{W(G_{x},T)}\cdot c_{\pi}(x)\\
				&=\frac{1}{\dF{d_{r}}\abs{n}^{r/2}r!}\sum_{m_{1}+\dots+m_{l}=m}\frac{(-1)^{m-l}}{\prod_{i=1}^{l}(r_{0}m_{i})!}\sum_{w\in W(G,T)}\sum_{x}f(m_{1},\dots,m_{l},w,x)
			\end{align*}
			where $x$ ranges over $\mrdiag(\zeta_{1},\dots,\zeta_{r})$ with $\zeta_{i}\in\mu_{n}$ for each $i$, and $$f(m_{1},\dots,m_{l},w,x):=\Delta^{G}(x)\cdot\prod_{i=1}^{l}\car{W((G_{r_{0}m_{i}})_{x_{w,i}},T_{r_{0}m_{i}})}\cdot c_{Z(\rho,[t_{i-1},t_{i}-1])}(x_{w,i}).$$
			It is clear that
			$$\sum_{w\in W(G,T)}\sum_{x}f(m_{1},\dots,m_{l},w,x)=\car{W(G,T)}\sum_{x}f(m_{1},\dots,m_{l},1,x),$$
			so we further have
			\begin{equation}\label{eqdtildepi}
				d_{\wt{\pi}}=\frac{1}{\dF{d_{r}}\abs{n}^{r/2}}\sum_{m_{1}+\dots+m_{l}=m}\frac{(-1)^{m-l}}{\prod_{i=1}^{l}(r_{0}m_{i})!}\sum_{x}f(m_{1},\dots,m_{l},1,x).
			\end{equation}
			For the identity element $1\in W(G,T)$, we write $x_{i}=x_{1,i}$ for short for each $i=1,\cdots,l$. Using Conjecture \ref{conjmain}, 
			$$f(m_{1},\dots,m_{l},1,x)=\Delta^{G}(x)\prod_{i=1}^{l}\car{W((G_{r_{0}m_{i}})_{x_{i}},T_{r_{0}m_{i}})}\cdot c_{Z(\rho,[t_{i-1},t_{i}-1])}(x_{i})$$
			is 0 except the following condition holds: For every $i=1,\dots,l$, the matrix $x_{i}$ is $G_{r_{0}m_{i}}$-conjugate to a matrix of the form $\mrdiag(\zeta_{i1}I_{r_{0}},\dots,\zeta_{im_{i}}I_{r_{0}})$ with $\zeta_{ij}\in\mu_{n}$ pairwise different for $j=1,\cdots,m_{i}$. In this case we have $\car{W((G_{r_{0}m_{i}})_{x_{i}},T_{r_{0}m_{i}})}=(r_{0}!)^{m_{i}}$ for each $i$ and $$f(m_{1},\dots,m_{l},1,x)=\Delta^{G}(x) \prod_{i=1}^{l}(r_{0}!)^{m_{i}}\prod_{i=1}^{m_{i}}\omega_{\rho}(\zeta_{ij}).$$ 
			
			\textbf{From now on we assume $\mrgcd(n,p)=1$.} The most important consequence of this assumption is that, $\Delta^{G}(x)=1$ for any $x$ of the form $\mrdiag(\zeta_{1},\dots,\zeta_{n})$ with $\zeta_{i}\in\mu_{n}$, which seems to be crucial in the calculation below. Moreover, we also have $\mu_{n}\subset\mbf_{q}^{\times}$, $\dF{d_{r}}=d_{r}$ and $\abs{n}=1$.
			
			Then we have
			\begin{align*}
				d_{\wt{\pi}}&=\frac{1}{d_{r}}\sum_{m_{1}+\dots+m_{l}=m}\bigg(\prod_{i=1}^{l}\frac{(-1)^{m_{i}-1}(r_{0}!)^{m_{i}}}{(r_{0}m_{i})!}\sum_{x_{i}}\prod_{j=1}^{m_{i}}\omega_{\rho}(\zeta_{ij})\bigg)\\
				&=\frac{1}{d_{r}}\sum_{m_{1}+\dots+m_{l}=m}\bigg(\prod_{i=1}^{l}\frac{(-1)^{m_{i}-1}}{m_{i}!}\sum_{\substack{\zeta_{i1},\dots,\zeta_{im_{i}}\in\mu_{n}\\ \text{pairwise different}} }\prod_{j=1}^{m_{i}}\omega_{\rho}(\zeta_{ij})\bigg)
			\end{align*}
			where for each $i$, the corresponding $x_{i}$ in the sum ranges over the matrices in $T_{r_{0}m_{i}}$ that are $G_{r_{0}m_{i}}$-conjugate to $\mrdiag(\zeta_{i1}I_{r_{0}},\dots,\zeta_{im_{i}}I_{r_{0}})$ with $\zeta_{ij}\in\mu_{n}$ pairwise different. 
			
			\begin{lemma}
				For each $i$ we have
				$$\frac{1}{m_{i}!}\sum_{\substack{\zeta_{i1},\dots,\zeta_{im_{i}}\in\mu_{n}\\ \text{pairwise different}} }\prod_{j=1}^{m_{i}}\omega_{\rho}(\zeta_{ij})=\begin{cases}
					(-1)^{(s+1)m_{i}/s}\cdot\binom{n/s}{m_{i}/s}\quad&\text{if}\ s\ \text{divides}\ m_{i};\\
					0\quad&\text{otherwise}.
				\end{cases}$$
			\end{lemma}
			
			\begin{proof}
				Let $\zeta$ be a generator of $\mu_{n}$, and let $\zeta_{0}=\omega_{\rho}(\zeta)$ which is a generator of $\mu_{s}$. Then the left-hand side becomes
				$$\sum_{k_{1}+\dots+k_{s}=m_{i}}\prod_{j=1}^{s}\binom{n/s}{k_{j}}\zeta_{0}^{jk_{j}},$$
				where $k_{j}$ in the sum are non-negative integers, meaning that for each $j$ there are $k_{j}$'s $\zeta_{il}$ with $1\leq l\leq m_{i}$ such that $\omega_{\rho}(\zeta_{il})=\zeta_{0}^{j}$.
				Considering the generating function
				$$\prod_{j=1}^{s}(\zeta_{0}^{j}X+1)^{n/s}=\big((-1)^{s+1}X^{s}+1\big)^{n/s}$$
				and comparing the coefficient of $X^{m_{i}}$, we finish the proof.
			\end{proof}
			Using this lemma, we further have
			\begin{align*}d_{\wt{\pi}}&=\frac{1}{d_{r}}\sum_{\substack{m_{1}+\dots+m_{l}=m\\ s\ \text{divides}\ m_{i}}}\bigg(\prod_{i=1}^{l}(-1)^{m_{i}-1}(-1)^{(s+1)m_{i}/s}\cdot\binom{n/s}{m_{i}/s}\bigg)\\
				&=\frac{1}{d_{r}}\sum_{m_{1}'+\dots+m_{l}'=m/s}\bigg(\prod_{i=1}^{l}(-1)^{m_{i}'-1}\cdot\binom{n/s}{m_{i}'}\bigg)\\
				&=\frac{1}{d_{r}}\cdot\binom{m/s+n/s-1}{m/s},
			\end{align*}
			where for the last step we compare the coefficient of $X^{m/s}$ in the generating function
			$$\sum_{i=0}^{+\infty}\big(\sum_{j=1}^{n/s}(-1)^{j-1}\binom{n/s}{j}X^{j}\big)^{i}=\frac{1}{(1-X)^{n/s}}=\sum_{i=0}^{+\infty}\binom{i+n/s-1}{i}X^{i},$$
			so we finish the proof.
			
		\end{proof}
		
		\begin{remark}\label{remcalirred}
			
			By Theorem \ref{thmwhitcal} and Proposition \ref{propwhitdimBZprod}, it is theoretically and algorithmically possible to calculate $d_{\wt{\pi}}$ for any $\wt{\pi}\in\mrirr_{\wt{\omega}}(\wt{G_{r}})$, since in the Grothendieck group $\wt{\pi}$ can be written as a linear combination (with coefficients being integers that can be determined) of representations of the form
			$$(L(\wt{\rho}_{1},[0,m_{1}-1])_{\wt{\omega}_{1}}\wt{\times}\dots\wt{\times} L(\wt{\rho}_{l},[0,m_{l}-1])_{\wt{\omega_{l}}})_{\wt{\omega}},$$
			where $\wt{\rho}_{i}\in\mrcusp_{\epsilon}(\wt{G_{r_{i}}})$ such that $\sum_{i=1}^{l}r_{i}m_{i}=r$ and $\wt{\omega}_{i}$ are genuine compatible characters of $Z(\wt{G_{r_{i}m_{i}}})$ for each $i$
			(\emph{cf.} \cite{kaplan2022classification}).
			
		\end{remark} 
		
		In particular we have the following corollary.
		
		\begin{corollary}\label{corwhitdimLZ}
			
			Let $\wt{\rho}\in\mrcusp_{\epsilon}(\wt{G_{r_{0}'}})$. Assume $\mrmc(\wt{\rho})=L(\rho,[0,s-1])$ for $r_{0}'=r_{0}s$ and $\rho\in\mrcusp(G_{r_{0}})$, such that $\omega_{\rho}\rest_{\mu_{n}}$ is of order $s$ (\emph{cf.} Proposition \ref{propmcsqrtclassification}). If $\mrgcd(n,p)=1$, then $$d_{L(\wt{\rho},[0,k-1])_{\wt{\omega}}}=\frac{1}{d_{r}}\binom{k+n/s-1}{k}\quad\text{and}\quad d_{Z(\wt{\rho},[0,k-1])_{\wt{\omega}}}=\frac{1}{d_{r}}\binom{n/s}{k}.$$
			
		\end{corollary}
		
		\begin{proof}
			
			The first statement is nothing but a reformulation of Theorem \ref{thmwhitcal}. For the second statement, recall that we have the following determinantal formula (\emph{cf.} \cite{kaplan2022classification}, \cite{lapid2014determinantal})
			\begin{align*}
				&Z(\wt{\rho},[0,k-1])_{\wt{\omega}}\\
				=&\sum_{k_{1}+\dots+k_{l}=k}(-1)^{k-l}(L(\wt{\rho},[t_{0},t_{1}-1])_{\wt{\omega}_{1}}\wt{\times}L(\wt{\rho},[t_{1},t_{2}-1])_{\wt{\omega}_{2}}\wt{\times}\dots\wt{\times}L(\wt{\rho},[t_{l-1},t_{l}-1])_{\wt{\omega}_{l}})_{\wt{\omega}},
			\end{align*}
			where we write $t_{0}=0$ and $t_{i}=k_{1}+\dots+k_{i}$ for each $i=1,\dots,l$, and the above two sums are in the Grothendieck group of finite length representations of $\wt{G_{r}}$. Taking the Whittaker dimension and using the first formula and Proposition \ref{propwhitdimBZprod}, we get
			$$d_{r}\cdot d_{Z(\wt{\rho},[0,k-1])_{\wt{\omega}}}=\sum_{k_{1}+\dots+k_{l}=k}\prod_{i=1}^{l}(-1)^{k_{i}-1}\binom{k_{i}+n/s-1}{k_{i}}=\binom{n/s}{k},$$
			where the last equation follows by considering the coefficient of $X^{k}$ in the generating function
			$$\sum_{i=0}^{+\infty}\big(\sum_{j=1}^{+\infty}(-1)^{j-1}\binom{j+n/s-1}{j}X^{j}\big)^{i}=\sum_{i=0}^{+\infty}(1-(1+X)^{-n/s})^{i}=(1+X)^{n/s}.$$
			
		\end{proof}
		
		In particular we have the following interesting special case.
		
		\begin{corollary}\label{corwhitdimZlarge}
			
			We keep the notation of Corollary \ref{corwhitdimLZ}. Then $$d_{Z(\wt{\rho},[0,n/s-1])_{\wt{\omega}}}=1\quad \text{and}\quad d_{Z(\wt{\rho},[0,k-1])_{\wt{\omega}}}=0\ \text{if}\ k>n/s.$$
			
		\end{corollary}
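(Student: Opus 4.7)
The plan is to apply Corollary \ref{corwhitdimLZ} directly and then separately handle the two cases by a binomial coefficient computation together with a direct evaluation of $d_r$.

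First I would invoke the formula
$$d_{Z(\wt{\rho},[0,k-1])_{\wt{\omega}}}=\frac{1}{d_{r}}\binom{n/s}{k}$$
from Corollary \ref{corwhitdimLZ}, noting that here the representation lives on $\wt{G_{r}}$ with $r=r_{0}'\cdot k = r_{0}sk$. For the case $k>n/s$, since $s\mid n$ (as $s$ is the order of a character of $\mu_{n}$), $n/s$ is a non-negative integer and by the usual convention $\binom{n/s}{k}=0$, which immediately gives $d_{Z(\wt{\rho},[0,k-1])_{\wt{\omega}}}=0$.

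For the case $k=n/s$, we have $\binom{n/s}{n/s}=1$, so it remains to show that $d_{r}=1$ for $r=r_{0}\cdot s\cdot (n/s)=r_{0}n$. This is where the small but crucial computation lies: by definition $d_{r}=\mrgcd(n,2rc+r-1)$, and
$$2rc+r-1 = 2r_{0}nc + r_{0}n - 1 = n(2r_{0}c+r_{0})-1 \equiv -1\pmod{n},$$
so $\mrgcd(n,2rc+r-1)=\mrgcd(n,-1)=1$. Plugging back yields $d_{Z(\wt{\rho},[0,n/s-1])_{\wt{\omega}}}=1/1=1$.

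There is no real obstacle here; the only point to verify carefully is the divisibility statement $s\mid n$ (which follows from the definition of $s$) and the modular computation of $2rc+r-1$ in the boundary case $k=n/s$, which is what forces the central-character contribution $d_r$ to trivialize exactly when the Whittaker dimension hits $1$.
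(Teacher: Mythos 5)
Your proposal is correct and is essentially the paper's (unwritten) argument: the corollary is stated as an immediate specialization of Corollary \ref{corwhitdimLZ}, with the vanishing for $k>n/s$ coming from $\binom{n/s}{k}=0$ and the value $1$ at $k=n/s$ requiring exactly the check you perform, namely that $r=r_{0}n$ forces $2rc+r-1\equiv -1\pmod n$ and hence $d_{r}=1$. You have correctly identified $r=r_{0}sk$ as the relevant rank and supplied the one computation the paper leaves implicit.
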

		
		It should be interesting to give an independent algebraic proof of the above corollary, like what Bernstein-Zelevinsky did in \cite{bernstein1977induced, zelevinsky1980induced} in the $n=1$ case. Their proof, which is mysterious enough, relies on the multiplicity one theorem of the Whittaker model of a generic representation. So giving an independent proof of the about corollary includes providing a better understanding of the old but prominent theory of Bernstein-Zelevinsky.
		
		\subsection{Evidence of Conjecture \ref{conjmain}}\label{subsectioneviconj}
		
		In this subsection we still let $G=G_{r}$. We verify Conjecture \ref{conjmain} for  two special cases.
		
		First we consider the case where $r_{0}=1$, then $\rho$ is indeed a character of $F^{\times}$. In this case by definition we have $$Z(\rho,[a,b])=(\rho\nu^{(b+a)/2})\circ\mrdet$$
		which is a character of $G_{r}$. Then for $x=\mrdiag(\zeta_{1},\cdots,\zeta_{r})$ with $\zeta_{i}\in\mu_{n}$ we have
		$$\Delta^{G}(x)\cdot c_{Z(\rho,[a,b])}(x)=\lim_{x'\rightarrow x}\Delta^{G}(x')\cdot(\rho\nu^{(b+a)/2})(\mrdet(x'))$$
		where in the limit $x'=\mrdiag(\zeta_{1}',\dots,\zeta_{r}')$ with $\zeta_{i}'\in\mu_{n}$ pairwise different. If there exist $i\neq j$ such that $\zeta_{i}=\zeta_{j}$, then $$\lim_{x'\rightarrow x}\Delta^{G}(x')=0\quad\text{and thus}\quad c_{Z(\rho,[a,b])}(x)=0;$$ 
		if $\zeta_{i}$ are pairwise different for $i=1,\dots,r$, we have $$c_{Z(\rho,[a,b])}(x)=(\rho\nu^{(b+a)/2})(\mrdet(x))=\prod_{i=1}^{r}\rho(\zeta_{i}).$$ So in this case the conjecture is verified. In particular we have the following important corollary as a special case of Corollary \ref{corwhitdimLZ}.
		
		\begin{corollary}
			
			Let $\wt{\chi}\in\mrirr_{\epsilon}(\wt{F^{\times}})$. Assume $\mrgcd(n,p)=1$, then
			$$d_{L(\wt{\chi},[0,k-1])_{\wt{\omega}}}=\frac{1}{d_{r}}\binom{k+n-1}{k}\quad\text{and}\quad d_{Z(\wt{\chi},[0,k-1])_{\wt{\omega}}}=\frac{1}{d_{r}}\binom{n}{k}.$$
			
		\end{corollary}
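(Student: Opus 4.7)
The plan is to recognize this statement as the specialization of Corollary \ref{corwhitdimLZ} to $r_0' = 1$. Since $\wt{\chi} \in \mrirr_{\epsilon}(\wt{F^{\times}}) = \mrcusp_{\epsilon}(\wt{G_1})$, the decomposition $r_0' = r_0 s$ appearing in Corollary \ref{corwhitdimLZ} is forced to be $1 = 1 \cdot 1$. Hence $\rho := \mrmc(\wt{\chi}) = L(\rho,[0,0])$ is a character of $F^{\times}$, and the order of $\omega_\rho\rest_{\mu_n}$ is $s = 1$. Plugging $s = 1$ into the two displayed formulas of Corollary \ref{corwhitdimLZ} yields exactly the two identities to be proved.

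The only point that requires checking is that Corollary \ref{corwhitdimLZ} is available in this setting, since it was deduced from Theorem \ref{thmwhitcal}, whose proof was made conditional on Conjecture \ref{conjmain}. However, Conjecture \ref{conjmain} has already been verified in the paragraph immediately preceding the corollary for the case $r_0 = 1$: using $Z(\rho,[a,b]) = (\rho\nu^{(a+b)/2})\circ\mrdet$, which is a genuine character of $G_r$, one applies the limit formula \eqref{eqcpilimtheta} at $x = \mrdiag(\zeta_1,\dots,\zeta_r)$ with $\zeta_i \in \mu_n$. The Weyl discriminant $\Delta^{G}(x')$ vanishes in the limit unless the entries $\zeta_i$ are pairwise distinct, and when they are distinct the germ evaluates to $\prod_{i=1}^r \rho(\zeta_i)$, which is precisely the formula in the conjecture.

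Since the chain of implications Theorem \ref{thmwhitcal} $\Rightarrow$ Corollary \ref{corwhitdimLZ} only invokes Conjecture \ref{conjmain} for the specific cuspidal datum $\rho$ in question, and here $\rho$ is a character (so $r_0 = 1$), the conjecture is known in all instances needed. Substituting $s = 1$ then gives the claimed Whittaker dimensions for $L(\wt{\chi},[0,k-1])_{\wt{\omega}}$ and $Z(\wt{\chi},[0,k-1])_{\wt{\omega}}$. I do not anticipate any serious obstacle here, since the hard work has already been done; the proof is essentially a bookkeeping observation that the hypotheses of Corollary \ref{corwhitdimLZ} match up trivially in the rank-one case.
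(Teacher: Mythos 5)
Your proposal is correct and follows exactly the paper's route: the paper deduces this corollary as the special case of Corollary \ref{corwhitdimLZ} with $r_0'=r_0=s=1$, which is legitimate precisely because Conjecture \ref{conjmain} has just been verified for $r_0=1$ in the preceding paragraph, so the conditional dependence of Theorem \ref{thmwhitcal} is discharged for the cuspidal datum at hand. Your only slip is calling $Z(\rho,[a,b])=(\rho\nu^{(a+b)/2})\circ\mrdet$ a ``genuine'' character --- it is an ordinary character of $G_r$, the adjective \emph{genuine} being reserved for representations of the cover --- but this does not affect the argument.
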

		
		\begin{remark}\label{remcalirredwhdim}
			
			In the case where $\wt{\chi}$ is an unramified representation,  $Z(\wt{\chi},[0,k-1])_{\wt{\omega}}$ is usually called an ``exceptional representation" or a ``theta representation", whose Whittaker dimension is calculated in \cite{kazhdan1984metaplectic}*{Theorem I.3.5} by considering functional equations related to intertwining operators. So we give an independent proof of their result and also generalize it to ramified case.
			
		\end{remark}
		
		Now we consider the other extreme for cuspidal representations, which is summed up as the following theorem.
		
		\begin{theorem}\label{thmcalcpi}
			
			Assume $\mrgcd(n,p)=1$. For $\pi\in\mrcusp(G)$ and $x=\mrdiag(\zeta_{1},...,\zeta_{r})$ with $\zeta_{i}\in\mu_{n}\subset\mbf_{q}^{\times}\subset F^{\times}$, we have
			$$c_{\pi}(x)=\begin{cases} \omega_{\pi}(\zeta)\quad&\text{if}\ \zeta_{1}=...=\zeta_{r}=\zeta;\\
				0\quad&\text{if}\ \zeta_{i}\neq\zeta_{j}\ \text{for some}\ i\neq j.  \end{cases}$$

		\end{theorem}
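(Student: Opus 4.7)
The strategy has two cases, according to whether $x$ is central in $G=G_r$.

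If $\zeta_{1}=\cdots=\zeta_{r}=\zeta$, then $x=\zeta I_{r}\in Z(G)$, so $\mfg_{x}=\mfg$, and the identity $\theta_{\pi}(x\,\mrexp(X))=\omega_{\pi}(\zeta)\theta_{\pi}(\mrexp(X))$ transfers to the germ expansion as $c_{\pi,\mco}(\zeta I_{r})=\omega_{\pi}(\zeta)\,c_{\pi,\mco}(I_{r})$ for every $\mco\in\mrnil(\mfg)$. Since $\mathfrak{gl}_{r}$ admits a unique regular nilpotent orbit, $c_{\pi}(I_{r})=c_{\pi,\mco_{\mathrm{reg}}}(I_{r})$, and by Rodier's theorem (the linear analogue of Proposition \ref{propwhitdim}) this equals $\dim\mrwh(\pi)$. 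Multiplicity one for Whittaker models of cuspidal representations of $G_{r}$ forces this dimension to be $1$, hence $c_{\pi}(\zeta I_{r})=\omega_{\pi}(\zeta)$.

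For the non-scalar case, assume the $\zeta_{i}$ are not all equal, so $G_{x}=G_{\beta}$ is a proper Levi for some composition $\beta=(r_{1},\dots,r_{k})$ with $k\geq 2$. By \eqref{eqcpilimtheta} it suffices to show $\Delta^{G}(x')\theta_{\pi}(x')\to 0$ as $x'$ approaches $x$ through semisimple regular elements of the diagonal torus $T$. The plan is to realize $\pi$ through Bushnell-Kutzko simple type theory as $\pi\simeq\text{c-Ind}_{\wt{J}}^{G}\Lambda$, where $(\wt{J},\Lambda)$ is an extended maximal simple type whose compact part $J\subset\wt{J}$ carries a pro-$p$ filtration with finite reductive quotient $J/J^{1}\simeq\prod_{i}\mrgl_{m_{i}}(\mbf_{q_{i}})$. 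Van Dijk's character formula for compact induction gives
$$\theta_{\pi}(x')=\sum_{g\in\wt{J}\backslash G/Z_{G}(x'),\;g^{-1}x'g\in\wt{J}}\theta_{\Lambda}(g^{-1}x'g).$$
The hypothesis $\mrgcd(n,p)=1$ embeds $\mu_{n}$ into $\mbf_{q}^{\times}$ via Teichm\"uller lifting, so the eigenvalues $\zeta_{i}$ of $x$ survive reduction as distinct elements. Consequently, for every representative $g$ in the sum, the semisimple part of $g^{-1}xg$ projects, under $J\to J/J^{1}$, to a non-scalar semisimple element of the finite reductive quotient. The restriction of $\Lambda$ to $J$ descends from a cuspidal representation (in the Deligne-Lusztig sense) of $J/J^{1}$, whose character is known to vanish on semisimple elements that are not central in $\prod_{i}\mrgl_{m_{i}}(\mbf_{q_{i}})$. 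Thus each term in the van Dijk sum vanishes in the limit $x'\to x$, giving $c_{\pi}(x)=0$.

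The main obstacle lies in the descent step of the non-scalar case: one must identify the precise Bushnell-Kutzko datum, track how the eigenvalues of $x$ interact with the underlying hereditary order and the embedded field extension $E\subset\mathrm{End}_{F}(F^{r})$ attached to $(\wt{J},\Lambda)$, and verify uniformly over all conjugacy representatives $g$ that the semisimple part of $g^{-1}x'g$ really projects to a non-scalar element in $J/J^{1}$ (not merely in some larger overgroup where the cuspidality argument would fail). Once this reduction to a finite-group statement is secured, the classical vanishing of cuspidal characters of finite general linear groups on non-scalar semisimple elements closes the argument; this is also where one naturally recovers the depth-zero case treated in \cite{blondel1992uniqueness} as a clean special instance.
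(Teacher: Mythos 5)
Your central case ($\zeta_{1}=\dots=\zeta_{r}=\zeta$) is exactly the paper's argument: \eqref{eqcpilimtheta} gives $c_{\pi}(\zeta I_{r})=\omega_{\pi}(\zeta)c_{\pi}(I_{r})$, and Rodier plus Shalika's multiplicity one give $c_{\pi}(I_{r})=1$. The non-scalar case also follows the paper's strategy (compact induction from a maximal simple type, the character formula for $\theta_{\pi}$, and vanishing of each term), but it contains a genuine gap at the decisive step. The finite-group input you invoke --- that the character of a cuspidal representation of a finite general linear group ``vanishes on semisimple elements that are not central'' --- is false. Cuspidal characters of $\mrgl_{m}(\mbf_{q^{f}})$ are nonzero on regular \emph{elliptic} semisimple elements (already for $\mrgl_{2}$, the value at an element with eigenvalues in $\mbf_{q^{2}}\setminus\mbf_{q}$ is $-(\theta(\alpha)+\theta(\alpha^{q}))$, generically nonzero). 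The correct criterion, which the paper takes from Green \cite{green1955characters}, is that the character vanishes unless the characteristic polynomial is a power of a single irreducible polynomial. So ``non-scalar'' is not enough; you must show the image of $g^{-1}x'g$ in the quotient $J^{0}/J^{1}\simeq\mrgl_{m}(\mbf_{q^{f}})$ is \emph{non-elliptic}, i.e.\ its characteristic polynomial over $\mbf_{q^{f}}$ has at least two distinct irreducible factors.

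That is precisely the content of the paper's Lemma \ref{lemmacharpoly}: writing $g^{-1}x'g=yk$ with $y\in\mfb^{\times}$, $k\in J^{1}$, one computes via the block-compatible embedding $\iota$ of \eqref{eqiotab*a*} that $[\mrn_{\mbf_{q^{f}}/\mbf_{q}}(P_{\overline{y}}(X))]^{e}=\prod_{i=1}^{r}(X-\zeta_{i})$; since the $\zeta_{i}$ lie in $\mbf_{q}^{\times}$ (here $\mrgcd(n,p)=1$ is used) and are not all equal, $P_{\overline{y}}$ acquires two distinct linear factors over $\mbf_{q^{f}}$ and Green's theorem applies. You correctly identify this descent as ``the main obstacle'' but leave it unresolved, and without it the argument does not close: the eigenvalue data of $x$ lives over $\mbf_{q}$ while the reductive quotient is a general linear group over the extension $\mbf_{q^{f}}$, and relating the two is where the work is. A smaller point: the restriction of $\Lambda$ to $J^{0}$ is $\kappa\otimes\rho$, not merely the inflation of a cuspidal representation, so one must also note that $\mrtr(\Lambda)=\mrtr(\kappa)\cdot\mrtr(\rho)$ on the relevant elements and that vanishing of the $\rho$-factor suffices.
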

		
		\begin{proof}
			
			When $\zeta_{1}=...=\zeta_{r}=1$, by \cite{rodier1975modele} and \cite{shalika1974multiplicity} $c_{\pi}(I_{r})$ equals the Whittaker dimension of $\pi$ which is 1. For $\zeta_{1}=...=\zeta_{r}=\zeta\in\mu_{n}$ in general, by (\ref{eqcpilimtheta}) we have $c_{\pi}(\zeta I_{r})=\omega_{\pi}(\zeta)c_{\pi}(I_{r})=\omega_{\pi}(\zeta)$. To prove the rest, we first give an ad-hoc introduction of the simple type theory for our use, and we leave \cite{bushnell2019arithmetic}, or in general \cite{bushnell129admissible} for more details.
			
			A simple stratum $[\mfa,\beta]$ in $A=\mrm_{r}(F)$ consists of a hereditary order $\mfa$ in $A$ and an element $\beta\in G$, such that $E=F[\beta]$ is a field of degree $d$ over $F$, and $E^{\times}$ normalizes $\mfa$. Let $B\simeq \mrm_{r}(E)$ be the centralizer of $E$ in $A$, thus $r=md$. Let $\mfb=B\cap\mfa$ be a hereditary order in $B$, let $\mfp_{\mfa}$ be the Jacobson radical of $\mfa$ and let $\mfp_{\mfb}=\mfp_{\mfa}\cap\mfb$ be the Jacobson radical of $\mfb$. We further assume our simple stratum to be maximal, saying that $\mfb$ is a maximal order in $B$. In this case, we change $\mfa$ up to $G$-conjugacy, such that $$\mfa=\{(a_{ij})_{1\leq i,j\leq e}\mid a_{ij}\in\mrm_{mf}(\mfo_{F})\ \text{for}\ 1\leq i\leq j\leq e\ \ \text{and}\ a_{ij}\in\mrm_{mf}(\mfp_{F})\ \text{for}\ 1\leq j< i\leq e \}$$
			and $$\mfp_{\mfa}=\{(a_{ij})_{1\leq i,j\leq e}\mid a_{ij}\in\mrm_{mf}(\mfo_{F})\ \text{for}\ 1\leq i< j\leq e\ \ \text{and}\ a_{ij}\in\mrm_{mf}(\mfp_{F})\ \text{for}\ 1\leq j\leq i\leq e \},$$ where $d=ef$ with $e$ the ramification index and $f$ the residue degree of $E/F$, and $\mfo_{F}$ (resp. $\mfo_{E}$) denotes the ring of integers of $F$ (resp. $E$), and $\mfp_{F}$ (resp. $\mfp_{E}$) denotes the corresponding maximal ideal. We further have the $\mbf_{q}$-algebra embedding
			\begin{equation}\label{eqiotaba}
				\iota=(\iota_{1},...,\iota_{e}):\mrm_{m}(\mbf_{q^{f}})\simeq\mfb/\mfp_{\mfb}\hookrightarrow\mfa/\mfp_{\mfa}\simeq\underbrace{\mrm_{mf}(\mbf_{q})\times...\times\mrm_{mf}(\mbf_{q})}_{e\text{-copies}},
			\end{equation}
			where each $\iota_{i}$ is an $\mbf_{q}$-algebra embedding\footnote{Each $\iota_{i}$ is injective since $\mrm_{m}(\mbf_{q^{f}})$ is a simple $\mbf_{q}$-algebra.} from $\mrm_{m}(\mbf_{q^{f}})$ to the $i$-th $\mrm_{mf}(\mbf_{q})$ on the right-hand side, which induces the corresponding group embedding 
			\begin{equation}\label{eqiotab*a*}
				\iota=(\iota_{1},...,\iota_{e}):\mrgl_{m}(\mbf_{q^{f}})\simeq\mfb^{\times}/1+\mfp_{\mfb}\hookrightarrow\mfa^{\times}/1+\mfp_{\mfa}\simeq\underbrace{\mrgl_{mf}(\mbf_{q})\times...\times\mrgl_{mf}(\mbf_{q})}_{e\text{-copies}}.
			\end{equation}
			Associated with $[\mfa,\beta]$, we may construct an open compact pro-$p$-group $J^{1}(\mfa,\beta)\subset 1+\mfp_{\mfa}$ and an open compact group $J^{0}(\mfa,\beta)=\mfb^{\times} J^{1}(\mfa,\beta)\subset \mfa^{\times}$ , and we have $J^{0}(\mfa,\beta)/J^{1}(\mfa,\beta)\simeq\mfb^{\times}/1+\mfp_{\mfb}\simeq \mrgl_{m}(\mbf_{q^{f}})$. We will write $J^{0}$ and $J^{1}$ instead of $J^{0}(\mfa,\beta)$ and $J^{1}(\mfa,\beta)$ for short. 
			
			One main result of the simple type theory predicts the existence of a pair $(\bs{J},\Lambda)$ with $\bs{J}:=E^{\times}J^{0}$ and an irreducible representation $\Lambda$ of $\bs{J}$, such that $\pi$ is isomorphic to the compact induction $\mrind_{\bs{J}}^{G}(\Lambda)$. The restriction $\lambda=\Lambda\rest_{J^{0}}$ equals the tensor product $\kappa\otimes \rho$. Here $\kappa$ is a so-called \emph{$\beta$-extension} as an irreducible representation of $J^{0}$ of dimension a power of $p$, whose construction and properties shall be irrelevant. And $\rho$ is the inflation of a cuspidal representation $\overline{\rho}$ of $\mrgl_{m}(\mbf_{q^{f}})\simeq\mfb^{\times}/1+\mfp_{\mfb}$. 
			
			We come back to the original proof. Let $x'=\mrdiag(\zeta_{1}',...,\zeta_{r}')$ be a semi-simple regular element of $G$, such that $\zeta_{i}'\in\zeta_{i}(1+\mfp_{F})$ for each $i$. For a finite dimensional irreducible representation $\Lambda$ of an $\ell$-group $\bs{J}$, we denote by $\mrtr(\Lambda)$ the trace of $\Lambda$. Using \cite{bushnell1996local}*{Theorem A.14}, for an open compact subgroup $K_{1}$ of $G$, we have 
			\begin{equation}\label{eqthetapiTrLambda}
				\theta_{\pi}(x')=\sum_{h\in K_{1}\backslash G/\bs{J}}\sum_{g\in K_{1}h\bs{J}/\bs{J}, g^{-1}x'g\in\bs{J}}\mrtr(\Lambda)(g^{-1}x'g).
			\end{equation}
			
			\begin{lemma}\label{lemmatrLambda=0}
				
				If $\zeta_{i}\neq \zeta_{j}$ for some $1\leq i<j\leq n$, then for any $g\in G$ such that $g^{-1}x'g\in\bs{J}$ we have $\mrtr(\Lambda)(g^{-1}x'g)=0$. 
				
			\end{lemma}
			
			It is clear that the rest of Theorem \ref{thmcalcpi} follows from Lemma \ref{lemmatrLambda=0}, (\ref{eqcpilimtheta}) and (\ref{eqthetapiTrLambda}), so we focus on the proof of Lemma \ref{lemmatrLambda=0}. Since $\mrdet(g^{-1}x'g)=\prod_{i=1}^{r}\zeta_{i}'\in\mfo_{F}^{\times}$, we have $g^{-1}x'g\in J^{0}$. We write $g^{-1}x'g=yk$ with $y\in\mfb^{\times}, k\in J^{1}$, and $\overline{y}$ the image of $y$ in $\mrgl_{m}(\mbf_{q^{f}})\simeq\mfb^{\times}/1+\mfp_{\mfb}$.
			
			\begin{lemma}\label{lemmacharpoly}
				
				Let $P_{\overline{y}}(X)$ be the characteristic polynomial of $\overline{y}$ in $\mrm_{m}(\mbf_{q^{f}})$, then $$[\mrn_{\mbf_{q^{f}}/\mbf_{q}}(P_{\overline{y}}(X))]^{e}=\prod_{i=1}^{r}(X-\zeta_{i}),$$ 
				where $\mrn_{\mbf_{q^{f}}/\mbf_{q}}:=\prod_{\sigma\in\mathrm{Gal}(\mbf_{q^{f}}/\mbf_{q})}\sigma$ denotes the norm map.
				
			\end{lemma}
			
			\begin{proof}
				
				By definition, $P'(X)=\prod_{i=1}^{r}(X-\zeta_{i}')\in\mfo_{F}[X]$ is the characteristic polynomial of $g^{-1}x'g=yk$, thus $P(X)=\prod_{i=1}^{r}(X-\zeta_{i})\in \mbf_{q}[X]$ is the characteristic polynomial of $yk$ in $\mrgl_{r}(\mbf_{q})\simeq\mrgl_{r}(\mfo_{F})/1+\mrm_{r}(\mfp_{F})$. Since $yk\in\mfa^{\times}$, by direct calculation $P(X)$ depends only on the image of $yk$ in $\mrgl_{mf}(\mbf_{q})^{\oplus e}\simeq\mfa^{\times}/1+\mfp_{\mfa}$. More precisely by (\ref{eqiotab*a*}), $\iota(\overline{y})=\mrdiag(\iota_{1}(\overline{y}),...,\iota_{e}(\overline{y}))$ is exactly the image of $yk$ in $\mrgl_{mf}(\mbf_{q})^{\oplus e}$.   
				Moreover, $P(X)=\prod_{i=1}^{e}P_{\iota_{i}(\overline{y})}(X)$ with $P_{\iota_{i}(\overline{y})}(X)$ denoting the characteristic polynomial of $\iota_{i}(\overline{y})$ in $\mrm_{mf}(\mbf_{q})$. Finally we have $\mrn_{\mbf_{q^{f}}/\mbf_{q}}(P_{\overline{y}}(X))=P_{\iota_{i}(\overline{y})}(X)$ for each $i$, thus
				$[\mrn_{\mbf_{q^{f}}/\mbf_{q}}(P_{\overline{y}}(X))]^{e}=\prod_{i=1}^{e}P_{\iota_{i}(\overline{y})}(X)=P(X)=\prod_{i=1}^{r}(X-\zeta_{i}).$
				
			\end{proof}
			
			Now we finish the proof of Lemma \ref{lemmatrLambda=0}. By definition we have $\mrtr(\Lambda)(g^{-1}x'g)=\mrtr(\kappa)(g^{-1}x'g)\cdot\mrtr(\rho)(g^{-1}x'g)$ and $\mrtr(\rho)(g^{-1}x'g)=\mrtr(\rho)(yk)=\mrtr(\overline{\rho})(\overline{y})$. Using \cite{green1955characters}*{Theorem 12},  $\mrtr(\overline{\rho})(\overline{y})=0$ unless $P_{\overline{y}}(X)$ is a power of an irreducible polynomial in $\mbf_{q}[X]$, which happens only if $\zeta_{1}=...=\zeta_{r}$
			using Lemma \ref{lemmacharpoly}. Thus we must have $\mrtr(\overline{\rho})(\overline{y})=0$ and $\mrtr(\Lambda)(g^{-1}x'g)=0$ once there exist $i,j$ such that $\zeta_{i}\neq\zeta_{j}$, finishing the proof of Lemma \ref{lemmatrLambda=0} and Theorem \ref{thmcalcpi}.
			
		\end{proof}
		
		As a special case of Theorem \ref{thmwhitcal}, we have
		
		\begin{corollary}
			
			Assume $\mrgcd(n,p)=1$. Let $\wt{\pi}\in\mrcusp_{\epsilon}(\wt{G})$ such that $\pi=\mrmc(\wt{\pi})\in\mrcusp(G)$, then
			$$d_{\wt{\pi}}=n/d_{r}.$$
			
		\end{corollary}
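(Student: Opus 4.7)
The plan is to extract this as an immediate special case of Theorem \ref{thmwhitcal}. The first step is to match the cuspidal $\wt{\pi}$ to the setup of the theorem: by Proposition \ref{propmcsqrtclassification}, any $\wt{\pi}\in\mrsqrt_{\epsilon}(\wt{G})$ satisfies $\mrmc(\wt{\pi})=L(\rho,[0,m-1])$ for some divisor $m$ of $r$ and some $\rho\in\mrcusp(G_{r/m})$. Since essentially square integrable representations on the linear side are classified by Zelevinsky segments, $L(\rho,[0,m-1])$ is itself cuspidal if and only if the segment is trivial, i.e.\ $m=1$. Our hypothesis that $\pi=\mrmc(\wt{\pi})$ is cuspidal therefore forces $m=1$, so $\pi=\rho$ with $r_0=r$.

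With $m=1$, the integer $s$ in Theorem \ref{thmwhitcal}, defined as the order of $\omega_{\rho}\rest_{\mu_{n}}$ and required to divide $m$, must equal $1$ (which is also consistent with $\omega_{\pi}\rest_{\mu_{n}}=1$ via \eqref{eqomegawtomega}). Substituting $m=s=1$ into the dimensional formula of Theorem \ref{thmwhitcal} gives
\[
d_{\wt{\pi}}=\frac{1}{d_{r}}\binom{m/s+n/s-1}{m/s}=\frac{1}{d_{r}}\binom{n}{1}=\frac{n}{d_{r}},
\]
which is the desired identity. There is no real obstacle here: the theorem already does the combinatorial heavy lifting, and the only observation needed is that cuspidality of $\pi$ collapses the Zelevinsky segment to a singleton so that $m=1$ and $s=1$. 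The assumption $\mrgcd(n,p)=1$ is inherited unchanged from Theorem \ref{thmwhitcal}.
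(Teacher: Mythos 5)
Your reduction is arithmetically correct and is exactly the route the paper takes: the corollary is presented there simply as a special case of Theorem \ref{thmwhitcal}, and cuspidality of $\pi=\mrmc(\wt{\pi})$ collapses the Zelevinsky segment so that $m=1$, hence $s=1$ and $d_{\wt{\pi}}=\tfrac{1}{d_{r}}\binom{n}{1}=n/d_{r}$. The one point you should make explicit is that Theorem \ref{thmwhitcal} is proved \emph{on assuming} Conjecture \ref{conjmain}, whereas the corollary is stated unconditionally; quoting the theorem as a black box would leave the corollary conditional on the conjecture. This is harmless here but needs a sentence: when $m=1$, Tadic's determinantal formula is vacuous ($L(\rho,[0,0])=Z(\rho,[0,0])=\rho$), so the only input from Conjecture \ref{conjmain} used in the proof of Theorem \ref{thmwhitcal} is the value of $c_{\rho}(x)$ for $\rho$ cuspidal at $x=\mrdiag(\zeta_{1},\dots,\zeta_{r})$ with $\zeta_{i}\in\mu_{n}$ --- precisely the case established unconditionally in Theorem \ref{thmcalcpi}, which is why the corollary is placed immediately after that theorem. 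With that caveat supplied, your argument is complete.
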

		
		\begin{remark}
			
			We notice that $\wt{\pi}$ is cuspidal if $\pi$ is so, but the converse is not true in general (\emph{cf.} Proposition \ref{propmcsqrtclassification}). So this corollary calculates the Whittaker dimension of a certain class of cuspidal representations of $\wt{G}$. One may also compare our result with the result of Blondel (\cite{blondel1992uniqueness}*{Theorem 3}) for depth 0 cuspidal representations, and find out that they match well if the number $k$ in \emph{loc. cit.} is 1, which is indeed expected when $\pi$ is cuspidal (although we do not give a proof here). This ``coincidence" indeed motivates us to study an ``explicit" metaplectic correspondence as what Bushnell-Henniart \cite{bushnell2011essentially} proposed for the Jacquet-Langlands correspondence.
			
		\end{remark}
		
		\section{Conjecture 14 of \cite{kaplan2022rankin} for a Kazhdan-Patterson covering group}
		
		In this section, we explain how our results could be used to study an analogue of \cite{kaplan2022rankin}*{Conjecture 14} for a Kazhdan-Patterson covering group. Our discussion here is rather heuristic.
		
		Still, all the covering groups we consider are Kazhdan-Patterson for fixed $n,c$. Fix two positive integers $r$ and $c_{0}$. Let $\wt{\pi}\in\mrtemp_{\epsilon}(\wt{G_{r}})$, which is necessarily generic by Proposition \ref{proptempclasify}, Proposition \ref{propwhitdimBZprod} and Corollary \ref{corwhitdimLZ}. For a compatible genuine character $\wt{\omega}$ of $Z(\wt{G_{rnc_{0}}})$, we consider the parabolic induction
		\begin{equation}\label{eqparindrhoc}
			(\wt{\pi}\nu^{-(c_{0}n-1)/2n}\wt{\times}\wt{\pi}\nu^{-(c_{0}n-3)/2n}\wt{\times}\dots\wt{\times}\wt{\pi}\nu^{(c_{0}n-3)/2n}\wt{\times}\wt{\pi}\nu^{(c_{0}n-1)/2n})_{\wt{\omega}}.
		\end{equation}
		By the Langlands classification  \cite{ban2013langlands}*{Theorem 4.1}, it has a unique irreducible subrepresentation which we denote by $\rho_{c_{0}}(\wt{\pi})_{\wt{\omega}}$. We state  \cite{kaplan2022rankin}*{Conjecture 14} in our settings.
		
		\begin{conjecture}
			
			When $c_{0}=1$, the Whittaker space of $\rho_{1}(\wt{\pi})_{\wt{\omega}}$ is one-dimensional. In general, $\rho_{c_{0}}(\wt{\pi})_{\wt{\omega}}$ is a $(nr,c_{0})$-representation (\emph{cf.} \cite{kaplan2022rankin}*{Definition 11}), or, a fortiori, the highest derivative of $\rho_{c_{0}}(\wt{\pi})_{\wt{\omega}}$ is of order $nr$ and equals $\nu^{-1/2}\cdot\rho_{c_{0}-1}(\wt{\pi})_{\wt{\omega}'}$ for a certain compatible genuine character $\wt{\omega}'$ of $Z(\wt{G_{rn(c_{0}-1)}})$.
			
		\end{conjecture}
		
		\begin{remark}
			
			The exponent $-1/2$ in $\nu^{-1/2}\cdot\rho_{c_{0}-1}(\wt{\pi})_{\wt{\omega}'}$  differs from that in \cite{kaplan2022rankin}*{Conjecture 14} (which is $(n-1)/2$). It is because in \emph{loc. cit.} the author considered non-normalized derivative functors, whereas we consider normalized derivative functors.
			
		\end{remark}
		
		We expect that using Corollary \ref{corwhitdimLZ}, the Bernstein-Zelevinsky theory of derivatives in \cite{bernstein1976representations}*{\S 3-4} and the theory of generalized Speh representations studied by Tadi\'c \cite{tadic1986classification} and others, this conjecture can be verified as in the linear case \cite{cai2021generalized}*{Theorem 4}. However, we need to generalize the above results to a Kazhdan-Patterson covering group, which, although expected to be routine, has not been fully done yet. So instead we satisfy ourselves with a special case, where $\wt{\pi}$ is a cuspidal representation.
		
		\begin{proposition}
			
			When $\wt{\pi}$ is a cuspidal representation, the above conjecture is true.
			
		\end{proposition}
		
		\begin{proof}
			
			Let $s(\wt{\pi})$ be as in Proposition \ref{propvaluesrho}, let $s=s(\wt{\pi})n$ be a positive integer that divides $n$ and let $m=n/s$. Let 
			$$\wt{\pi}_{0}=Z(\wt{\pi},[-(mc_{0}-1)/2,(mc_{0}-1)/2])_{\wt{\omega}_{0}}$$
			be the unique irreducible subrepresentation of
			$$(\nu^{-(mc_{0}-1)s/2n} \wt{\pi}\wt{\times}\nu^{-(mc_{0}-3)s/2n} \wt{\pi}\wt{\times}\dots\wt{\times}\nu^{(mc_{0}-1)s/2n}\wt{\pi})_{\wt{\omega}_{0}}$$
			of $\wt{G_{rmc_{0}}}$, where $\wt{\omega}_{0}$ is a certain compatible genuine character of $Z(\wt{G_{rmc_{0}}})$. Using the Zelevinsky classification \cite{kaplan2022classification}, the representation \eqref{eqparindrhoc} has an irreducible subrepresentation
			\begin{equation*}
				(\nu^{-(s-1)/2n} \wt{\pi}_{0}\wt{\times}\nu^{-(s-3)/2n} \wt{\pi}_{0}\wt{\times}\dots\wt{\times}\nu^{(s-1)/2n}\wt{\pi}_{0})_{\wt{\omega}},
			\end{equation*}
			which is exactly $\rho_{c_{0}}(\wt{\pi})_{\wt{\omega}}$.
			Using \cite{kaplan2022rankin}*{Proposition 12} whose argument works for a Kazhdan-Patterson covering group as well, we need to prove that
			\begin{itemize}
				\item When $c_{0}=1$, the Whittaker space of $Z(\wt{\pi},[-(m-1)/2,(m-1)/2])_{\wt{\omega}_{0}}$ is one-dimensional;
				\item $\wt{\pi}_{0}=Z(\wt{\pi},[-(mc_{0}-1)/2,(mc_{0}-1)/2])_{\wt{\omega}_{0}}$ is an $(mr,c_{0})$-representation. Or, a fortiori, the highest derivative of $Z(\wt{\pi},[-(mc_{0}-1)/2,(mc_{0}-1)/2])_{\wt{\omega}_{0}}$ is of order $mr$ and equals  
				\begin{align*}
					&Z(\wt{\pi},[-(mc_{0}-1)/2,(mc_{0}-1-2m)/2])_{\wt{\omega}_{0}'}\\
					=&\nu^{-1/2}\cdot Z(\wt{\pi},[-(mc_{0}-1-m)/2,(mc_{0}-1-m)/2])_{\wt{\omega}_{0}''},
				\end{align*}
				where $\wt{\omega}_{0}'$ and $\wt{\omega}_{0}''$ are certain compatible genuine characters of $Z(\wt{G_{r(m-1)c_{0}}})$.
			\end{itemize}
			The first claim follows from Corollary \ref{corwhitdimZlarge}. 
			The second claim follows from Corollary \ref{corwhitdimZlarge}, the formula of Jacquet module of $\wt{\pi}_{0}$ in Lemma \ref{lemmaLabwtrho}.(2), and the fact that the $i$-th derivative functor equals the composition of the Jacquet module functor related to the composition $(rmc_{0}-i,i)$ with the $i$-th second partial derivative functor. We omit the detail but leave \cite{zelevinsky1980induced}*{\S 3} for a similar argument.
			
		\end{proof}
		
		\begin{bibdiv}
			\begin{biblist}
				
				\bib{ban2013langlands}{article}{
					author={Ban, D.},
					author={Jantzen, C.},
					title={The {Langlands} quotient theorem for finite central extensions of
						{$p$}-adic groups},
					date={2013},
					journal={Glas. Mat. Ser. III},
					volume={48},
					number={2},
					pages={313\ndash 334},
				}
				
				\bib{banks1998heredity}{article}{
					author={Banks, W.~D.},
					title={Heredity of {Whittaker} models on the metaplectic group},
					date={1998},
					journal={Pacific J. Math.},
					volume={185},
					number={1},
					pages={89\ndash 96},
				}
				
				\bib{banks1999block}{article}{
					author={Banks, W.~D.},
					author={Levy, J.},
					author={Sepanski, M.~R.},
					title={Block-compatible metaplectic cocycles},
					date={1999},
					journal={J. Reine Angew. Math.},
					volume={507},
					pages={131\ndash 163},
				}
				
				\bib{bernstein1976representations}{article}{
					author={Bernstein, I.~N.},
					author={Zelevinskii, A.~V.},
					title={Representations of the group {$\mathrm{GL}(n,F)$, where $F$} is a
						non-archimedean local field},
					date={1976},
					journal={Uspehi Mat. Nauk},
					volume={31},
					number={3},
					pages={5\ndash 70},
				}
				
				\bib{bernstein1977induced}{article}{
					author={Bernstein, I.~N.},
					author={Zelevinsky, A.~V.},
					title={Induced representations of reductive {$p$}-adic groups. {I}},
					date={1977},
					journal={Ann. Sci. {\'E}cole Norm. Sup. (4)},
					volume={10},
					number={4},
					pages={441\ndash 472},
				}
				
				\bib{beuzart2020local}{article}{
					author={Beuzart-Plessis, R.},
					title={A local trace formula for the {Gan-Gross-Prasad} conjecture for
						unitary groups: the archimedean case},
					date={2020},
					journal={Ast\'erisque},
					volume={418},
					pages={1\ndash 299},
				}
				
				\bib{blondel1992uniqueness}{article}{
					author={Blondel, C.},
					title={Uniqueness of {Whittaker} model for some supercuspidal
						representations of the metaplectic group},
					date={1992},
					journal={Compositio Math.},
					volume={83},
					number={1},
					pages={1\ndash 18},
				}
				
				\bib{borel1979automorphic}{article}{
					author={Borel, A.},
					title={Automorphic {L}-functions},
					date={1979},
					journal={Automorphic forms, representations and L-functions (Proc. Sympos.
						Pure Math., Oregon State Univ., Corvallis, Ore., 1977), Part 2},
					pages={27\ndash 61},
				}
				
				\bib{bushnell2019arithmetic}{article}{
					author={Bushnell, C.~J.},
					title={Arithmetic of cuspidal representations},
					date={2019},
					journal={Representations of {Reductive} {$p$}-adic {Groups}, Progr.
						Math.},
					volume={328},
					pages={39\ndash 126},
				}
				
				\bib{bushnell1996local}{article}{
					author={Bushnell, C.~J.},
					author={Henniart, G.},
					title={Local tame lifting for {$\mathrm{GL}(N)$. I.} {Simple
							characters.}},
					date={1996},
					journal={Inst. Hautes \'Etudes Sci. Publ. Math.},
					volume={83},
					pages={105\ndash 233},
				}
				
				\bib{bushnell2011essentially}{article}{
					author={Bushnell, C.~J.},
					author={Henniart, G.},
					title={The essentially tame {Jacquet-Langlands} correspondence for inner
						forms of {$\mathrm{GL}(n)$}},
					date={2011},
					journal={Pure Appl. Math. Q.},
					volume={7},
					number={3},
					pages={469\ndash 538},
				}
				
				\bib{bushnell129admissible}{article}{
					author={Bushnell, C.~J.},
					author={Kutzko, P.~C.},
					title={The admissible dual of {$\mathrm{GL}(N)$} via compact open
						subgroups},
					date={1993},
					journal={Annals of Mathematics Studies},
					volume={129},
				}
				
				\bib{cai2021generalized}{article}{
					author={Cai, Yuanqing},
					author={Friedberg, Solomon},
					author={Gourevitch, Dmitry},
					author={Kaplan, Eyal},
					title={The generalized doubling method: {$(k,c)$} models},
					date={2023},
					ISSN={0002-9939,1088-6826},
					journal={Proc. Amer. Math. Soc.},
					volume={151},
					number={7},
					pages={2831\ndash 2845},
					url={https://doi.org/10.1090/proc/16370},
				}
				
				\bib{casselman1977characters}{article}{
					author={Casselman, W.},
					title={Characters and {Jacquet} modules},
					date={1977},
					journal={Math. Ann.},
					volume={230},
					number={2},
					pages={101\ndash 105},
				}
				
				\bib{chinta2013metaplectic}{article}{
					author={Chinta, G.},
					author={Offen, O.},
					title={A metaplectic {Casselman-Shalika} formula for
						{$\mathrm{GL}_{r}$}},
					date={2013},
					journal={Amer. J. Math.},
					volume={135},
					number={2},
					pages={403\ndash 441},
				}
				
				\bib{clozel1991invariant}{article}{
					author={Clozel, L.},
					title={Invariant harmonic analysis on the {Schwartz} space of a
						reductive {$p$}-adic group. \emph{Harmonic analysis on reductive groups}},
					date={1991},
					journal={Progr. Math.},
					volume={101},
					pages={101\ndash 121},
				}
				
				\bib{flicker1980automorphic}{article}{
					author={Flicker, Y.~Z.},
					title={Automorphic forms on covering groups of $\mathrm{GL}(2)$},
					date={1980},
					journal={Invent. Math.},
					volume={57},
					number={2},
					pages={119\ndash 182},
				}
				
				\bib{flicker1986metaplectic}{article}{
					author={Flicker, Y.~Z.},
					author={Kazhdan, D.~A.},
					title={Metaplectic correspondence},
					date={1986},
					journal={Inst. Hautes \'Etudes Sci. Publ. Math.},
					volume={64},
					pages={53\ndash 110},
				}
				
				\bib{gan2018groups}{article}{
					author={Gan, W.-T.},
					author={Gao, F.},
					author={Weissman, M.~H.},
					title={{L}-groups and the {L}anglands program for covering groups: {A}
						historical introduction},
					date={2018},
					journal={Ast{\'e}risque},
					volume={398},
					pages={1\ndash 31},
				}
				
				\bib{gao2017distinguished}{article}{
					author={Gao, F.},
					title={Distinguished theta representations for certain covering groups},
					date={2017},
					journal={Pacific J. Math.},
					volume={290},
					number={2},
					pages={333\ndash 379},
				}
				
				\bib{gao2021Rgroup}{article}{
					author={Gao, F.},
					title={{R}-group and {Whittaker} space of some genuine representations},
					date={2021},
					journal={J. Inst. Math. Jussieu},
					volume={1},
					number={22},
					pages={1\ndash 61},
				}
				
				\bib{gao2022Rgroup}{article}{
					author={Gao, F.},
					title={{R-group and Whittaker space of some genuine representations,
							II}},
					date={2022},
					journal={Res. Number Theory},
					volume={8},
					number={62},
					pages={1\ndash 15},
				}
				
				\bib{gao2022genuine}{article}{
					author={Gao, F.},
					author={Gurevich, N.},
					author={Karasiewicz, E.},
					title={Genuine {pro-$p$ Iwahori-Hecke algebras, Gelfand-Graev
							representations}, and some applications},
					date={2022},
					journal={arXiv preprint arXiv:2204.13053},
				}
				
				\bib{gao2019whittaker}{article}{
					author={Gao, F.},
					author={Weissman, M.~H.},
					title={Whittaker models for depth zero representations of covering
						groups},
					date={2019},
					journal={Int. Math. Res. Not. IMRN},
					volume={2019},
					number={11},
					pages={3580\ndash 3620},
				}
				
				\bib{green1955characters}{article}{
					author={Green, J.~A.},
					title={The characters of the finite general linear groups},
					date={1955},
					journal={Trans. Amer. Math. Soc.},
					volume={80},
					pages={402\ndash 447},
				}
				
				\bib{harishchandra1999admissible}{book}{
					author={Harish-Chandra},
					title={Admissible {Invariant Distributions on Reductive $p$-adic
							Groups}, \emph{With a preface and notes by Stephen DeBacker and Paul J.
							Sally, Jr.}},
					publisher={American Mathematical Society},
					date={1999},
					number={16},
				}
				
				\bib{jacquet1981euler}{article}{
					author={Jacquet, H.},
					author={Shalika, J.~A.},
					title={On {Euler} products and the classification of automorphic
						representations {I}},
					date={1981},
					journal={Amer. J. Math.},
					volume={103},
					number={3},
					pages={499\ndash 558},
				}
				
				\bib{kable2001tensor}{article}{
					author={Kable, A.~C.},
					title={The tensor product of exceptional representations on the general
						linear group},
					organization={Elsevier},
					date={2001},
					journal={Ann. Sci. \'Ecole Norm. Sup. (4)},
					volume={34},
					number={5},
					pages={741\ndash 769},
				}
				
				\bib{kaplan2019doubling}{article}{
					author={Kaplan, E.},
					title={Doubling constructions and tensor product {$L$}-functions:
						coverings of the symplectic group},
					date={2019},
					journal={arXiv preprint arXiv:1902.00880},
				}
				
				\bib{kaplan2022rankin}{article}{
					author={Kaplan, E.},
					title={{Rankin-Selberg Integrals and L-Functions for Covering Groups of
							General Linear Groups}},
					date={2022},
					journal={Int. Math. Res. Not. IMRN},
				}
				
				\bib{kaplan2022classification}{article}{
					author={Kaplan, E.},
					author={Lapid, E.},
					author={Zou, J.},
					title={Classification of irreducible representations of metaplectic
						covers of the general linear group over a non-archimedean local field},
					date={2022},
					journal={arXiv preprint arXiv:2206.14731},
				}
				
				\bib{kazhdan1984metaplectic}{article}{
					author={Kazhdan, D.~A.},
					author={Patterson, S.~J.},
					title={Metaplectic forms},
					date={1984},
					journal={Inst. Hautes {\'E}tudes Sci. Publ. Math.},
					number={59},
					pages={35\ndash 142},
				}
				
				\bib{kazhdan1986towards}{article}{
					author={Kazhdan, D.~A.},
					author={Patterson, S.~J.},
					title={Towards a generalized {S}himura correspondence},
					date={1986},
					ISSN={0001-8708},
					journal={Adv. in Math.},
					volume={60},
					number={2},
					pages={161\ndash 234},
					url={https://doi.org/10.1016/S0001-8708(86)80010-X},
				}
				
				\bib{kubota1967topological}{article}{
					author={Kubota, T.},
					title={Topological covering of {$\mathrm{SL}(2)$} over a local field},
					date={1967},
					journal={J. Math. Soc. Japan},
					volume={19},
					number={1},
					pages={114\ndash 121},
				}
				
				\bib{lapid2014determinantal}{article}{
					author={Lapid, E.},
					author={M{\'\i}nguez, A.},
					title={On a determinantal formula of {T}adi{\'c}},
					date={2014},
					journal={Amer. J. Math.},
					volume={136},
					number={1},
					pages={111\ndash 142},
				}
				
				\bib{li2012formule}{article}{
					author={Li, W.-W.},
					title={La formule des traces pour les rev{\^e}tements de groupes
						r{\'e}ductifs connexes. {II. Analyse} harmonique locale},
					date={2012},
					journal={Ann. Sci. {\'E}c. Norm. Sup{\'e}r. (4)},
					volume={45},
					number={5},
					pages={787\ndash 859},
				}
				
				\bib{matsumoto1969sous}{article}{
					author={Matsumoto, H.},
					title={Sur les sous-groupes arithm{\'e}tiques des groupes semi-simples
						d{\'e}ploy{\'e}s},
					date={1969},
					journal={Ann. Sci. {\'E}cole Norm. Sup. (4)},
					volume={2},
					number={1},
					pages={1\ndash 62},
				}
				
				\bib{mezo2002comparisons}{article}{
					author={Mezo, P.},
					title={Comparisons of general linear groups and their metaplectic
						coverings {I}},
					date={2002},
					journal={Canad. J. Math.},
					volume={54},
					number={1},
					pages={92\ndash 137},
				}
				
				\bib{mezo2004metaplectic}{article}{
					author={Mezo, P.},
					title={Metaplectic tensor products for irreducible representations},
					date={2004},
					journal={Pacific J. Math.},
					volume={215},
					number={1},
					pages={85\ndash 96},
				}
				
				\bib{moeglin1987modeles}{article}{
					author={M{\oe}glin, C.},
					author={Waldspurger, J.-L.},
					title={Mod{\`e}les de {Whittaker} d{\'e}g{\'e}n{\'e}r{\'e}s pour des
						groupes {$p$}-adiques},
					date={1987},
					journal={Math. Z.},
					volume={196},
					number={3},
					pages={427\ndash 452},
				}
				
				\bib{moeglin1995spectral}{book}{
					author={M{\oe}glin, C.},
					author={Waldspurger, J.-L.},
					title={Spectral decomposition and {Eisenstein} series: a paraphrase of
						the scriptures},
					publisher={Cambridge University Press},
					date={1995},
					number={113},
				}
				
				\bib{patel2015theorem}{article}{
					author={Patel, S.-P.},
					title={A theorem of {M{\oe}glin} and {Waldspurger} for covering groups},
					date={2015},
					journal={Pacific J. Math.},
					volume={273},
					number={1},
					pages={225\ndash 239},
				}
				
				\bib{rodier1975modele}{article}{
					author={Rodier, F.},
					title={Mod{\`e}le de {Whittaker} et caract{\`e}res de
						repr{\'e}sentations},
					date={1975},
					journal={Non-commutative harmonic analysis, Lecture Notes in Math., Vol.
						466, Springer, Berlin},
					pages={151\ndash 171},
				}
				
				\bib{shalika1974multiplicity}{article}{
					author={Shalika, J.~A.},
					title={The multiplicity one theorem for {$\mathrm{GL}_{n}$}},
					date={1974},
					journal={Ann. of Math.},
					volume={100},
					number={2},
					pages={171\ndash 193},
				}
				
				\bib{shimura1973modular}{article}{
					author={Shimura, G.},
					title={On modular forms of half integral weight.},
					date={1973},
					journal={Ann. of Math.},
					volume={97},
					number={2},
					pages={440\ndash 481},
				}
				
				\bib{suzuki1998distinguished}{article}{
					author={Suzuki, T.},
					title={Distinguished representations of metaplectic groups},
					date={1998},
					journal={Amer. J. Math.},
					volume={120},
					number={4},
					pages={723\ndash 755},
				}
				
				\bib{tadic1986classification}{article}{
					author={Tadi{\'c}, M.},
					title={Classification of unitary representations in irreducible
						representations of general linear group (non-{Archimedean} case)},
					date={1986},
					journal={Ann. Sci. {\'E}cole Norm. Sup. (4)},
					volume={19},
					number={3},
					pages={335\ndash 382},
				}
				
				\bib{takeda2016metaplectic}{article}{
					author={Takeda, S.},
					title={Metaplectic tensor products for automorphic representation of
						{$\widetilde{\mathrm{GL}}(r)$}},
					date={2016},
					journal={Canad. J. Math.},
					volume={68},
					number={1},
					pages={179\ndash 240},
				}
				
				\bib{takeda2017remarks}{article}{
					author={Takeda, S.},
					title={Remarks on metaplectic tensor products for covers of
						{$\mathrm{GL}_{r}$}},
					date={2017},
					journal={Pacific J. Math.},
					volume={290},
					number={1},
					pages={199\ndash 230},
				}
				
				\bib{van1972computation}{article}{
					author={Van~Dijk, G.},
					title={Computation of certain induced characters of {$p$}-adic groups},
					date={1972},
					journal={Math. Ann.},
					volume={199},
					pages={229\ndash 240},
				}
				
				\bib{waldspurger2003formule}{article}{
					author={Waldspurger, J.-L.},
					title={La formule de {Plancherel} pour les groupes {$p$}-adiques.
						{D'apr\`es Harish-Chandra}},
					date={2003},
					journal={J. Inst. Math. Jussieu},
					volume={2},
					number={2},
					pages={235\ndash 333},
				}
				
				\bib{weil1974basic}{book}{
					author={Weil, A.},
					title={Basic number theory. third edition. die grundlehren der
						mathematischen wissenschaften},
					publisher={Springer-Verlag, New York-Berlin},
					date={1974},
					volume={144},
				}
				
				\bib{zelevinsky1980induced}{article}{
					author={Zelevinsky, A.~V.},
					title={Induced representations of reductive $p$-adic groups. {II. On
							irreducible representations of $\mathrm{GL}(n)$}},
					date={1980},
					journal={Ann. Sci. {\'E}cole Norm. Sup.},
					volume={13},
					number={2},
					pages={165\ndash 210},
				}
				
			\end{biblist}
		\end{bibdiv}

	\end{document}